\DeclareMathOperator{\im}{Im}
\DeclareMathOperator{\koo}{K_{\varphi}}
\DeclareMathOperator{\pfo}{P_{\varphi}}
\DeclareMathOperator{\fix}{\mathrm{fix}}
\DeclareMathOperator{\one}{\mathbbm{1}}
\DeclareMathOperator{\bone}{\mathbf{1}}
\newcommand{\overbar}[1]{\mkern 1.5mu\overline{\mkern-1.5mu#1\mkern-1.5mu}\mkern 1.5mu}
\newtheorem{mythm}{Theorem}[section]
\newtheorem{mylem}[mythm]{Lemma}
\newtheorem{mycor}[mythm]{Corollary}
\theoremstyle{definition}
\theoremstyle{remark}
\newtheorem*{myrem}{Remark}
\newtheorem{myex}{Example}[section]
\begin{document}


\title{Stochastic basins of attraction and generalized committor functions}%

\author{Michael Lindner}
\affiliation{Potsdam Institute for Climate Impact Research, Telegrafenberg A31, 14473 Potsdam, Germany}
\affiliation{Department of Mathematics, Humboldt University, Rudower Chaussee 25, 12489 Berlin,
Germany}

\author{Frank Hellmann}%
\affiliation{Potsdam Institute for Climate Impact Research, Telegrafenberg A31, 14473 Potsdam, Germany}

\date{\today}

\begin{abstract}

 We generalize the concept of basin of attraction of a stable state in order to facilitate the analysis of dynamical systems with noise and to assess stability properties of metastable states and long transients. To this end we examine the notions of mean sojourn times and absorption probabilities for Markov chains and study their relation to the basins of attraction. Our approach is directly applicable to all systems that can be approximated as Markov chains, including stochastic and deterministic differential equations. We also provide a sampling based generalization of basin stability that works without resorting to the Markov approximation by sampling trajectories directly.
\end{abstract}

\pacs{?}
\keywords{basin stability, dynamical systems, transfer operators, Markov chains, committor functions}

\maketitle

\begin{quotation}

We discuss two far reaching generalizations of the basin of attraction of an attractor. These apply to general sets in the phase space of non-deterministic systems. The first is based on absorption probabilities, the second on expected mean sojourn times with respect to a finite time horizon. By casting the problem in the transfer operator language, we are able to give a simple formula for the first generalization along the lines of committor functions.

We show that the two notions of generalized basin coincide in the limit of a vanishing absorption probability and an infinite time horizon respectively. Importantly, for well-behaved deterministic systems this limit recovers the usual notion of basin of attraction. Finally we point out that derived quantities like the volume of the generalized basin are accessible through sampling trajectories at the same computational cost as evaluating basin stability for deterministic systems.

\end{quotation}

\section{Introduction}
 
There are two complementary approaches to the analysis of classical dynamical systems, focused either on individual trajectories generated by an iterated map or on observables or densities propagated by the associated transfer operators. Even if the underlying system is non-linear or stochastic, the associated transfer operators are always linear but usually act on infinite-dimensional spaces. 

Roughly speaking an attractor is a forward-invariant set $A$ which admits a larger set $V$, such that all states in $V$ converge to $A$. Its basin of attraction $B_A$ is the set of all such convergent states. An important quantity for many applications is the probability that a trajectory converges to a specific attractor after an initial, possibly large perturbation. Thus basin stability~\cite{menck2014dead, menck2013basin} of $A$ is defined as the volume of its basin of attraction under a given perturbation measure. An attractor $A$ with non-zero basin stability  is called Milnor attractor~\cite{milnor1985concept}.
So far basin stability has been studied almost exclusively in the context of deterministic dynamics, since if the system is stochastic the topological notions of attractor and basin of attraction are no longer meaningful and measure-theoretic notions have to be considered instead. 

An invariant set finds its analogon in the concept of an invariant measure. Ergodic theory~\cite{krengel1985ergodic, eisner2015operator} asks under which conditions and in which sense initial conditions converge to an invariant measure, if this measure is unique and what its characteristic properties are. In other words ergodic theory is concerned with the asymptotic behaviour of measure-preserving dynamical systems.

On finite time-scales metastable sets~\cite{bovier2002metastability} or almost-invariant sets~\cite{dellnitz1997almost, froyland2005statistically, froyland2009almost} correspond in many ways to attractors, that is they generalize an attractors transients properties, the influence it exerts on trajectories in its neighborhood.  When a system is a weak perturbation of a multistable deterministic system, the basins of attraction get perturbed into metastable sets in backward time. One can also consider sets which are metastable in both time directions, or the more general notion of sets that stay coherent under non-autnomous dynamics\cite{banisch2017understanding, froyland2010transport}.

Recently~\citet{serdukova2016stochastic} proposed the notion of stochastic basins of attraction for deterministic systems with noise. Their definition is based on studying the escape probability from the basin of attraction of the underlying deterministic system. In contrast, our definitions work without assuming an underlying deterministic system, or knowledge of its basin structure. This allows the construction of efficient estimators for the associated basin stability in high dimensional systems. We will further discuss the relationship to this work in the conclusion.

We will propose two closely related time-scale dependent notions of stochastic basins, based on mean sojourn times~\cite{rubino1989sojourn} and on hitting probabilities~\cite{norris1998markov}, which are also known as committor functions~\cite{metzner2009transition}. While these quantities can be defined for systems on a continuous state space as well, they originate in the theory of finite Markov chains, where their analysis is well understood. Commitor functions have been used previously as a tool to study deterministic basins, and to optimize basin stability in \cite{koltai2011efficient, koltai2011stochastic, koltai2014optimizing, neumann2004application}.

Since the focus of this paper is mainly in introducing new concepts we will often restrict our attention to Markov chains, thereby side-stepping technical difficulties that arise in continous state spaces or in continuous time. For many applications it suffices to develop the theory on the discrete level since the state spaces are naturally finite. If that is not the case, it is always possible to construct a finite rank approximation of the transfer operator which preserves the Markov property by a Galerkin scheme called Ulam's method~\cite{ulam1960collection, ding2002finite, klus2016numerical}. Computation of the Ulam matrix requires only knowledge of short trajectories and in some cases can be computed without integration of trajectories at all~\cite{froyland2013estimating}. The Perron-Frobenius theorem ensures the existence of a stationary distribution for the resulting Markov chain, however the convergence of these finite rank approximations to the stationary distribution of the transfer operator is a difficult open problem that has been solved only for some classes of systems~\cite{li1976finite, ding1996finite}.

\section{Theoretical foundations}

The following sections contain a brief introduction to dynamical systems and their operator-theoretic formulation. We will only touch upon some of the most important aspects of the theory and give references to literature. We will sometimes drop mathematical rigor for intuitive understanding and refer the interested reader to the literature and the appendix. The purpose of this chapter is to introduce Markov chains, and in particular mean sojourn times and committor functions, as powerfull tools for studying complicated dynamical behaviour.

\subsection{Discrete time dynamical systems \label{sec:topds}}

Let $(X,d)$ be a compact, metric space and let $\varphi: X \to X$ be a continuous map. The pair $(X,\varphi)$ is called discrete time dynamical system. For any fixed $x \in X$ its trajectory under $\varphi$ is the set $\{ \varphi^k(x) \mid k\in\mathbb{N} \}$, and its \emph{omega limit set} $\omega(x)$ is defined as
\begin{equation}
 \omega(x) = \bigcap_{K \in \mathbb{N}} \overbar{\{\varphi^k(x) \mid k \geq K\}},
 \label{eq:omegalimit}
\end{equation}
where $\overbar{A}$ is the closure of a set $A$. $\omega(x)$ is the set of all accumulation points of the trajectory of $x$.

$A \subseteq X$ is called \emph{backward-invariant} with respect to $\varphi$, if ${\varphi^{-1}(A):=\{x \in X \mid \varphi(x) \in A\} = A,}$ \emph{forward-invariant} if $\varphi(A)=A$ and \emph{absorbing} if $\varphi(A) \subseteq A$. It can be shown that $\omega(x)$ is closed, forward-invariant and non-empty for every $x \in X$.

For a set $A\subseteq X$ define its \emph{basin of attraction} $B_A$ as
\begin{equation}
 B_A = \{ x \in X \mid \omega(x) \subseteq A \}.
\end{equation}
 Milnor~\cite{milnor1985concept} calls the set $B_A$ realm of attraction to avoid confusion with other common definitions of basin of attraction. We state without proof that for any set $A$ its basin $B_A$ is backward-invariant under $\varphi$, and if $A$ is closed then $B_A$ is Borel-measurable.

\subsection{Stability}

As we have seen above the omega limit set $\omega(x)$ is the collection of asymptotic states of a trajectory starting at $x$. An important question for many applications as well as for numerical modelling is whether and in what sense limit sets, or, more broadly, invariant sets, are stable. Roughly speaking a set $A$ is locally stable if all states in a neighborhood converge to it. In other words small perturbations around $A$ do not change the asymptotic behaviour of the system. If one considers non-small perturbations that span the entire state space a global notion of stability is required. 

More formally, a set $A\subseteq X$ is \emph{Lyapunov stable} if for all neihbourhoods $B'$ of $A$ there exists another neihbourhood $B$ of $A$ from which trajectories end up in $B'$ eventually. That is, for every $x \in B$, there is a time $T$ after with which we have $\varphi^{T'}(x) \in B'$ for all $T' > T$.

$A$ is \emph{attractive} in a set $B \subseteq X$ if $\omega(x) \subseteq A$ for all $x \in B$. Further we say that $A$ is \emph{locally stable} if $A$ is Lyapunov stable and attractive in an open neighborhood $U$ of $A$ and that $A$ is \emph{globally stable} if $A$ is Lyapunov stable and attractive in $X$.

From the above definition it is clear that the basin of attraction $B_A$ of a set $A$ is the maximal subset of $X$ on which $A$ is attractive. For many applications it would be desirable to know $B_A$ exactly, in order to answer the question if a trajectory will converge to the same asymptotic state after an initial, possible large perturbation. Eventhough an exact characterization of $B_A$ is impossible in many cases of interest, for example due to the curse of dimensionality, it is often possible to compute the probability that a perturbed trajectory will converge back to $A$, as the volume of $B_A$ under a probability measure $\mu$ modelling the perturbation. \emph{Basin stability} of a closed set $A$ is then defined as the volume of its basin of attraction under $\mu$~\cite{menck2013basin}.

\begin{myrem} (Attractors)

 Naively speaking, attractors are subsets of state space to which some initial conditions converge asymptotically. Often, an attractor is conceptualized as an invariant set that fulfills some stability property, e.g. Lyapunov stability, and is minimal in the sense that it has no proper subset with the same properties. In his definition of attractors Milnor\cite{milnor1985concept} gives up the stability criterion and instead emphasizes `observability' by requiring that the corresponding basin of attraction has positive measure. According to Milnor a (minimal) attractor is a closed set $A \subseteq X$, such that
\begin{enumerate}
 \item $B_A$ has positive measure, with respect to a measure $\mu$ on the Borel $\sigma$-algebra of $X$;
 \item there is no strictly smaller closed subset $A^\prime \subset A$, such that $B_{A^\prime}$ has positive measure.
\end{enumerate}
In our terminology condition 1 states that $B_A$ should have positive basin stability. Since Milnor attractors need not even be Lyapunov stable, the term `stability' might be slightly misleading. Positive basin stability implies only a positive probability that $A$ is stable towards perturbations described by $\mu$.
\end{myrem}

\subsection{Transfer operators \label{sec:ctsop}}

So far we described a dynamical system by its trajectories, that is the action of a mapping $\varphi$ on states $x\in X$. Equivalently, we can study the action of the composition operator or \emph{Koopman operator} $\koo g = g \circ \varphi$ on observables $g$ in $L^\infty (X,\mu)$ for a given measure $\mu$. $\koo$ is a bounded and linear operator.

\begin{myrem}
 Since $X$ is compact, the space of continuous function from $X$ to $\mathbb{R}$ denoted by $C := C\left(X,\mathbb{R} \right) $ is contained in  $L^\infty (X,\mu)$. If  $g\in C$ is a fixed point of the restricted operator $\koo|_C$, that is $\koo|_C g = g $, then equivalently
\begin{equation}
  g(\varphi(x)) = g(x) \quad \forall x \in X.
\end{equation}
It follows that $g(\varphi^k(x)) = g(x)$ for all $k \in \mathbb{N}$ and hence $g$ is constant along trajectories and by continuity also on their limit sets and basins of attraction. We conclude that the fixed points of $\koo$ in $C$ characterize the basin structure of $\varphi$. In particular the constant function $\mathbbm{1}_X$ is a fixed point of $\koo$. For recent results on how to characterize the global stability of fixed points through the eigenfunctions of the Koopmann operator we refer the reader to ~\citet{mauroy2016global}.

\end{myrem}

The Koopman operator has a dual counterpart known as \emph{Perron-Frobenius operator} which generates the evolution of probability densities $f$ along trajectories. The Perron-Frobenius operator $\pfo$ acts on $L^1(X,\mu)$ and is defined by requiring that for all $\mu$-measurable sets $A \subseteq X$
\begin{equation}
 \int_A \pfo f \, \mu(dx) = \int_{\varphi^{-1}(A)} f \, \mu(dx).
\end{equation}
Some caution has to be taken to ensure that $\varphi$ and $\mu$ are compatible, for details we refer to \citet{lasota2013chaos}. If $f$ is a fixed point of $\pfo$, then $\mu_f(A):= \int_A f \mu(dx)$ is an invariant measure of $\varphi$, that is $\mu_f(\varphi^{-1}(A)) = \mu_f(A)$ for all measurable $A \subseteq X$.

When concerned with asymptotic behaviour invariant measures generalize in many ways the notion of an attractor. This property is formulated in the famous \emph{ergodic hypothesis} asking that for every (reasonable) observable $f:X\to\mathbb{R}$
\begin{equation}
 \lim_{N \to \infty} \frac{1}{N} \sum_{k=1}^{N} f(\varphi^k(x)) = \int_X f d\nu,
 \label{eq:ergo}
\end{equation}
Informally speaking, the ergodic hypothesis states that the time-average of $f$ along a trajectory should equal its space average with respect an invariant measure $\nu$. 

If we are given a measure $\mu$ that characterizes observable events, e.g. Lebesgue measure or a measure modelling a perturbation, then usually this measure is not preserved under $\varphi$. If it exists, the invariant measure such that Eq.~\eqref{eq:ergo} holds $\mu$-almost everywhere is called \emph{Sinai-Ruelle-Bowen (SRB) measure}~\cite{young2002srb} or sometimes \emph{physical measure}. Unfortunately, not every system has an SRB measure and it is an active area of research to find conditions on $\varphi$ that imply its existence\cite{catsigeras2011srb}. SRB measures are useful for similar reasons as Milnor attractors --- they ensure that the asymptotic states of the system are compatible with the given measure $\mu$.

\subsection{Markov operators and stochastic systems \label{sec:markovop}}

The Perron-Frobenius operator introduced in the previous section is a special case of a Markov operator. From now on we understand by a \emph{Markov operator}~\cite{lasota2013chaos} any linear operator $M:L^1(X,\mu) \to L^1(X,\mu)$ satisfying
\begin{enumerate}
 \item $Mf \geq 0$ for all $f \in L^1, f \geq 0$, we say M is \emph{positive}; and
 \item $ \lVert Mf \rVert_1 = \lVert f \rVert_1$ for all $f \in L^1, f \geq 0$, we say M is \emph{integral-preserving}.
\end{enumerate}

Just as the Perron-Frobenius operator describes the evolution of a density in the case of deterministic dynamics, a Markov operator describes 
the evolution of densities under stochastic dynamics. Markov operators are closely connected to Markov processes and their transition density functions~\cite{klus2016numerical}. For our purposes the Markov operator framework is convenient, since it allows to characterize the statistics of the process on a density level and avoids having to deal with individual trajectories.

Another convenient property of Markov operators is that they can be approximated by Markov operators of finite-rank~\cite{ding2009nonnegative}, which are just row-stochastic matrices. One such discretization scheme that goes back to an idea by Stanis\l{}aw Ulam is known as Ulam's method.
\subsection{Ulam's method for Markov operators \label{sec:ulam}}

Let $M:L^1(X,m) \to L^1(X,m)$ be a Markov operator, where $X \subset \mathbb{R}^n$ is compact and $m$ denotes the Lebesgue measure. Let $\mathcal{A}_h = \{A_1, \dots, A_K\}$ be a shape-regular partition of $X$ with mesh-size $h$. The basic idea of Ulam's method~\cite{klus2016numerical} is to obtain a coarse grained representation of the dynamics by considering only the flow of probability between partition elements. Consider the subspace $V_h \subset L^1$ spanned by the indicator functions $\one_{A_1},\dots,\one_{A_K}$. Let $Q_h: L^1 \to V_h$ be the projection onto $V_h$ given by
\begin{equation}
 Q_h f = \sum_{i=1}^K c_i \bone_i \quad \mathrm{with} \quad c_i = \int_{A_i} f dm,
\end{equation}
where $\bone_i = m(A_i)^{-1} \cdot \one_{A_i}$ denotes the $L^1$-normalized indicator functions.
\begin{mylem}
 The discretized operator $M_h := Q_h M|_{V_h}$ is a Markov operator as well.
\end{mylem}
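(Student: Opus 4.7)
The plan is to verify the two defining properties of a Markov operator, namely positivity and preservation of the $L^1$-norm on nonnegative functions, by first establishing them for the projection $Q_h$ itself and then observing that both properties pass to compositions.

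First I would show that $Q_h:L^1\to V_h$ is itself a Markov operator. Positivity is immediate: if $f\geq 0$, then each coefficient $c_i = \int_{A_i} f\,dm$ is nonnegative, and since $\bone_i\geq 0$, we get $Q_h f = \sum_i c_i \bone_i \geq 0$. For integral preservation, I would use that $\int_X \bone_i \,dm = m(A_i)^{-1} m(A_i) = 1$ by construction of the normalized indicators, so
\begin{equation}
 \int_X Q_h f \,dm = \sum_{i=1}^K c_i \int_X \bone_i\,dm = \sum_{i=1}^K \int_{A_i} f\,dm = \int_X f\,dm,
\end{equation}
where I used that $\{A_1,\dots,A_K\}$ partitions $X$. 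For $f\geq 0$ this gives $\lVert Q_h f\rVert_1 = \lVert f\rVert_1$.

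Next I would combine this with the assumed properties of $M$. Linearity of $M_h$ follows since $Q_h$ and $M|_{V_h}$ are linear. For positivity, given $f\in V_h$ with $f\geq 0$, the positivity of $M$ yields $Mf\geq 0$, and then positivity of $Q_h$ yields $M_h f = Q_h(Mf) \geq 0$. For integral preservation on nonnegative $f\in V_h$, I would chain the two identities:
\begin{equation}
 \lVert M_h f\rVert_1 = \int_X Q_h(Mf)\,dm = \int_X Mf\,dm = \lVert Mf\rVert_1 = \lVert f\rVert_1,
\end{equation}
where the first equality uses positivity of $M_h f$, the second is the integral-preservation property of $Q_h$ applied to $Mf\geq 0$, and the last equality is the Markov property of $M$.

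There is really no hard step here; the only point that requires a touch of care is the normalization convention for the $\bone_i$, which is precisely what makes $\int_X \bone_i\,dm = 1$ and so makes $Q_h$ integral-preserving rather than merely a projection. Everything else is a routine composition argument.
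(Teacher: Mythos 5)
Your proof is correct: you verify that $Q_h$ is itself a Markov operator (positivity of the coefficients $c_i$ and the normalization $\int_X \bone_i\,dm = 1$ together with the partition property give integral preservation) and then observe that positivity and integral preservation pass through the composition with $M$. The paper itself does not print a proof but defers to the cited reference of Ding and Zhou, and the argument given there is essentially this same factorization, so your proposal matches the intended approach.
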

\begin{proof}See \citet{ding2002finite}.
\end{proof}

 Denote the matrix representation of $M_h$ with respect to the basis $\bone_1,\dots,\bone_K$ by $\hat{M}_h$. Then the matrix entries $\hat{M}_{h,ij}$ are given by the relation
 
 \begin{equation}
  M_h \bone_i = \sum_{j=1}^K \int_{A_j} M \bone_i \ dm \cdot \bone_j = \sum_{j=1}^K \hat{M}_{h,ij} \bone_j,
 \end{equation}
and hence
 \begin{equation}
  \hat{M}_{h,ij} = \int_X \one_j M_h \bone_i dm = \int_X \one_j M \bone_i dm.
 \end{equation}

 \begin{myrem}
  We use the convention that matrices act by right-multiplication, i.e.
  \begin{equation}
   M_h (f) \equiv \hat{f}^T \hat{M}_h,
  \end{equation}
  where the left hand side signifies the operator acting on element $f \in V_h$ and the right hand side is the matrix representation acting on the vector representation $\hat{f}$ of $f$.
 \end{myrem}

 \begin{mycor}
  $\hat{M}_h$ is a (row-)stochastic matrix.
 \end{mycor}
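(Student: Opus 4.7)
The plan is to derive both defining properties of a row-stochastic matrix (nonnegativity of entries and rows summing to one) directly from the two Markov-operator axioms for $M_h$ established in the preceding lemma, applied to the basis elements $\bone_i$.

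First I would observe that each basis vector $\bone_i = m(A_i)^{-1}\one_{A_i}$ is nonnegative and satisfies $\lVert \bone_i \rVert_1 = 1$. Because $M_h$ is a Markov operator on $V_h$, positivity gives $M_h \bone_i \geq 0$. Since the functions $\bone_1,\dots,\bone_K$ have pairwise disjoint supports (the partition elements $A_j$), the decomposition $M_h \bone_i = \sum_{j=1}^{K} \hat{M}_{h,ij}\, \bone_j$ can be nonnegative only if every coefficient $\hat{M}_{h,ij}$ is nonnegative. This establishes entrywise nonnegativity. (Equivalently, one can read nonnegativity off the integral formula $\hat{M}_{h,ij} = \int_X \one_j\, M\bone_i\, dm$, using positivity of $M$ itself.)

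Next I would use integral preservation. On one hand $\lVert M_h \bone_i \rVert_1 = \lVert \bone_i \rVert_1 = 1$. On the other hand, by disjointness of supports and $\lVert \bone_j \rVert_1 = 1$,
\begin{equation}
\bigl\lVert \sum_{j=1}^{K} \hat{M}_{h,ij}\, \bone_j \bigr\rVert_1 = \sum_{j=1}^{K} \hat{M}_{h,ij}\, \lVert \bone_j \rVert_1 = \sum_{j=1}^{K} \hat{M}_{h,ij},
\end{equation}
where the first equality uses the nonnegativity just established to drop the absolute value. Equating the two expressions gives $\sum_j \hat{M}_{h,ij} = 1$ for every $i$, which is the row-sum-one condition.

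There is no real obstacle here; the only subtlety worth noting is that one must invoke the nonnegativity step before pulling the coefficients out of the $L^1$-norm, so the two bullet points of the Markov-operator definition really are used in sequence. The argument is essentially a transcription of the abstract Markov-operator properties into the finite basis provided by Ulam's method.
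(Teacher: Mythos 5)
Your argument is correct and is exactly the deduction the paper leaves implicit: the corollary is stated as an immediate consequence of the lemma that $M_h$ is a Markov operator, and your two steps (positivity plus disjoint supports of the $\bone_j$ giving nonnegative entries, then integral preservation giving unit row sums) are the canonical way to transcribe those operator axioms into the matrix representation. No gaps; the care you take to establish nonnegativity before pulling coefficients out of the $L^1$-norm is the right order of operations.
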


 \begin{myex}
  Let $M_{\varphi}$ be a Perron-Frobenius operator with respect to the measurable map $\varphi:X \to X$.  In this case the matrix representation of $M_{ \varphi}$ is given by
  
\begin{align}
 \hat{M}_{h,ij} & =  \int_X \one_j M_{\varphi} \bone_i dm = \frac{1}{m(A_i)} \int_{\varphi^{-1}(A_j)} \one_{A_i} dm \nonumber \\
                & = \frac{m(A_i \cap \varphi^{-1}(A_j))}{m(A_i)}. 
 \label{eq:pfoulam}
\end{align}
 We see that the entries of the matrix equal the probability that a randomly chosen state in $A_i$ gets mapped to $A_j$ under action of $\varphi$. Therefore $\hat{M}_{h,ij}$ is often called \emph{transition matrix}.
 
In Section \ref{sec:ctsop} we stated that the Perron-Frobenius operator and Koopman operator are dual to each other. Duality holds as well for the discretized operators, such that the transposed transition matrix $M_h^T$ is an approximation of $\koo$, for details see e.g.~\citet{klus2016numerical}.
 \end{myex}

Ulam's method is a Galerkin projection~\cite{klus2016numerical} and was originally developed to approximate fixed points $f^*$ of the Perron-Frobenius operator $M_{\varphi}$. Thus an imporant question is when the fixed points $M_{\varphi,h}f_h = f_h $ of the finite-rank approximation converge to $f^*$ for appropriately refined partitions $\mathcal{A}_h$ of mesh size $h \to 0$. \citet{li1976finite}, and \citet{ding1996finite} proved convergence for certain classes of piecewise continuous maps on $\mathbb{R}^d$. Convergence of the fixed point equation in the presence of small random perturbation was shown in \citet{froyland1997computing, dellnitz1999approximation}. Indeed, the Galerkin discretization itself may be interpreted as such a small perturbation of $M$ that converges back to the full operator with increasing partition accuracy\cite{froyland1997computing}.

Since the transition matrix can be associated with a Markov chain it is sometimes referred to as Markov model~\cite{froyland2001extracting} and was found to characterize the system's dynamical properties, even in cases where convergence of the fixed densities cannot be shown. This approach proved useful as well for approximating eigenfunctions with eigenvalues close to 1, which characterize the metastable behaviour of the dynamical system~\cite{froyland2014well}. Motivated by its connection to probability flows, see Eq. \eqref{eq:pfoulam}, the transition matrix was recently interpreted as adjacency matrix of a weighted, directed graph, facilitating its analysis by tools from network theory~\cite{ser2015flow, lindner2017spatio}.

\subsection{Numerical implementation of Ulam's method}

We consider the case that $M := M_{\varphi}$ is a Perron-Frobenius operator. From now on denote the matrix representation $\hat{M_h}$ simply as $M_h$. The entries $M_{h,ij}$ can be interpreted as transition probabilities from box $i$ to box $j$ and are usually approximated by Monte-Carlo simulation. In every box $A_i$ a large number of test points $x_i^k$ with $k=1,\dots,K$ is randomly chosen, such that the transition probability can be estimated by the fraction of points that is mapped to box $A_j$,
\begin{equation}
 M_{h,ij} \approx \frac{1}{K}\sum_{k=1}^K \one_{A_j}(\varphi(x_i^k)).
\end{equation}
It is easy to check that the resulting matrix is still stochastic and thus a numerical realization of Ulam's method~\cite{froyland2001extracting, ding2002finite, klus2016numerical}.For non-deterministic system, such that we are able to simulate individual trajectories, the same approach can be applied. 

Having arrived at this point we leave the theory of transfer operators and focus on the less technical and more intuitive case of Markov chains. Aside from facilitating the analysis of hard-to grasp operators on functions spaces by approximation arguments, Markov chains are an indispensable tool in the modelling of real-world phenomena in their own right.

\section{Markov chains}

A \emph{Markov chain}\cite{norris1998markov} $(X_k)_{k \in \mathbb{N}}$ is a stochastic process on a discrete state space $\mathcal{X}$, such that $X_k$ is a random variable with values in $\mathcal{X}$ for all $k \in \mathbb{N}$ and such that the Markov property is satisfied, i.e. for all $k \in \mathbb{N}$:
\begin{equation}
 \mathbb{P}[X_{k+1} = i \mid X_k = i_k,\dots,X_0=i_0] =  \mathbb{P}[X_{k+1} = i \mid X_k = i_k]
\end{equation}
where $i, i_0, \dots, i_k$ are arbitrary elements of $\mathcal{X}$. A Markov chain is called \emph{homogeneous}  or \emph{stationary} if
\begin{equation}
 \mathbb{P}[X_{k+1} = i \mid X_k = j] = \mathbb{P}[X_{1} = i \mid X_0 = j] \quad \forall k \in \mathbb{N}.
\end{equation}

There is a one-to-one correspondence between homogeneous Markov chains on a finite state space $\mathcal{X}=\{1,\dots,n\}$ and stochastic matrices $M\in\mathbb{R}^{n\times n}$ by setting
\begin{equation}
 \mathbb{P}[X_{1} = j \mid X_0 = i] = M_{ij}.
\end{equation}
A \emph{probability distribution vector} $\rho \in \mathbb{R}^n$ is a non-negative vector, such that $\sum_{i=1}^{n}\rho_i = 1$. If $\rho := \rho(0)$ specifies the initial distribution of the Markov chain, i.e. $\mathbb{P}[X_0 = i] = \rho_i$, then the $k$-th step distribution $\rho(k)_i=\mathbb{P}[X_k = i \mid X_0 \sim \rho]$ can be computed as 
\begin{equation}
 \rho(k) = \rho(k-1)^T M = \rho^T M^k.
\end{equation}
From now on we assume that $(X_k)_{k \in \mathbb{N}}$ is homogeneous on a finite state-space.

\subsection{Sojourn times \label{sec:meansojourntime}}

Let $A \subseteq \{1,\dots,n\}$ be a subset of the state space. The \emph{mean sojourn time} in $A$ is the relative amount of time that the process spends in $A$. Let $\one_A$ denote the indicator function on $A$, then the mean sojourn time $\tau_s(A)$ along a trajectory is the random variable
\begin{equation}
 \tau_s(A) := \lim_{N \to \infty} \frac{1}{N} \sum_{k=0}^{N-1} \one_A(X_k).
\end{equation}
The \emph{expected mean sojourn time} or \emph{EMS time} in $A$ is
\begin{equation}
 s(A) := \mathbb{E}[\tau_s(A)] = \lim_{N \to \infty} \frac{1}{N} \sum_{k=0}^{N-1} \rho^T M^k \one_A,
 \label{eq:emst}
\end{equation}
where $\one_A \in \mathbb{R}^n$ is an indicator vector, that is $(\one_A)_i = 1$, if $i \in A$ and $0$ else, and $\rho$ is the initial distribution of the Markov chain. Equation~\eqref{eq:emst} holds since
\begin{align}
 \mathbb{E}[\one_A(X_k)] & = \mathbb{P}[X_k \in A] = \sum_{i \in A} \rho(k)_i = \rho^T M^k \one_A.
\end{align}
In Section \ref{sec:ergomat} we will see that
\begin{equation}
 \lim_{N \to \infty} \frac{1}{N} \sum_{k=0}^{N-1} M^k = P_{\fix(M)},
 \label{eq:sojournconvergence}
\end{equation}
where $P_{\fix(M)}$ is a projection onto $\fix(M) := \{ x \in \mathbb{R}^n \mid x^T M = x^T \}$. It follows that the EMS time in state $j$ given the process started in state $i$ is given by the entry $P_{\fix(M),ij}$ of the projection matrix.

\begin{myex} \label{ex:emstulamfix}
 If $M$ is the Ulam transition matrix of a Perron-Frobenius operator, then the fixed space of its transpose $\fix(M^T)$ is an approximation of the fixed space of the Koopman operator. In Section~\ref{sec:ctsop} we saw that if the underlying map $\varphi$ is continuous, then the eigenfunctions of the Koopman operator are constant along trajectories and in particular constant on basins of fix points. In this case the elements of $\fix(M^T)$ can be used to approximate the basin structure of $\varphi$ . 
\end{myex}

In the case that the Markov chain models a dynamical system after an initial perturbation, we are often interested in its asymptotics, that is whether, and to which equilibrium states the system returns. This question leads to Eq.~\eqref{eq:sojournconvergence}. On the other hand the transient behaviours might be important as well, that is in which way does the system return to the equilibrium states, how long does it take to do so and does it spend a long time in certain metastable states before reaching equilibrium. In order to study the transient behaviour, we will consider the finite-horizon EMS time
\begin{equation}
 s_N(A) := \frac{1}{N} \sum_{k=0}^{N-1} \mathbb{E}[\one_A(X_k)] = \frac{1}{N} \sum_{k=0}^{N-1} \rho^T M^k \one_A.
\end{equation}

\subsection{Committor functions \label{sec:classcomm}}

A closely related question is to determine the probability that the process ends up in a subset $A \subseteq \mathcal{X}$ given it started in state $i$. This \emph{absorption probability vector} $q$ can be obtained as the mininmal non-negative solution of the system of equations~\cite{norris1998markov}
\begin{equation}
 \begin{aligned}
 Mq & = q \quad \mathrm{ on } \ \mathcal{X}\setminus{A} \\ 
  q & = 1 \quad \mathrm{ on } \ A
 \label{eq:committorA}
\end{aligned}
\end{equation}

Similarly, for two disjoint sets $A$ and $B$, the probability  of not entering a set $B$ before having visited $A$ is given by the minimal non-negative solution of
\begin{align}
\nonumber
 Mq & =  q \quad \mathrm{ on } \ \mathcal{X}\setminus{(A\cup B)} \\
 \nonumber
  q & =  1 \quad \mathrm{ on } \ A                          \\
  q & =  0 \quad \mathrm{ on } \ B 
  \label{eq:committorAB}
\end{align}

The solution $q$ is called \emph{committor function}. An equivalent way of looking at the problem is to modify the process by adding two exit states $Z_A, Z_B$ to the state space, such that the transition probability from any state in $A$ to the exit state $Z_A$ is 1 and equally for $B$ and $Z_B$.

\section{New concepts for committor functions}

In this section we introduce two new generalizations of committor functions, fuzzy committors and $\varepsilon$-committors, and study some of their basic properties. The latter has close connections to basins of attraction as well as to finite-horizon EMS times (see also Appendix~\ref{sec:convergenceresults}) and provides insight into transient and asymptotic stability properties of systems that can be approximated by Markov chains	 (see also Appendix~\ref{sec:differenceestimates} and Section~\ref{sec:applications}).

\subsection{Fuzzy committors}
When we introduce exit states we are free to choose arbitrary transition probabilities to the exit states. Assume we have two transition probability distributions $p^1,p^2$ into exit states $Z_1, Z_2$. Additionally we require that $0 \leq p^1_i + p^2_i \leq 1$ and $ p^1_i, p^2_i \geq0$ for all $i\in\mathcal{X}$. To obtain the probability of being absorbed into $Z_1$ we introduce the matrix
\begin{equation}
\hat{Q}= 
\left(
\begin{array}{c|cc}
 \hat{M} & p^1 & p^2 \\
 \hline
 0       & 1   & 0   \\
 0       & 0   & 1
\end{array}
\right),
\end{equation}
where $\hat{M}_{ij} := M_{ij}\cdot(1-p^1_i-p^2_i)$ and obtain the minimal, non-negative solution $\hat{q}=(q,1,0)$ to the system
\begin{align}
 \hat{Q}\hat{q} & =  \hat{q} \quad \mathrm{ on } \ \mathcal{X} \nonumber \\
 \hat{q} & = 1 \quad \mathrm{ on } \ Z_1                \nonumber \\
 \hat{q} & =  0 \quad \mathrm{ on } \ Z_2.
\end{align}
This can be rephrased as
\begin{align}
 \nonumber \hat{M}q+p^1 & =  q\quad \\ 
  \Leftrightarrow  (I-\hat{M})q & =   p^1,
 \label{eq:fuzzycommittor}
\end{align}
where we denote the absorption probability into exit state $Z_1$ by $q$. Note however that $\hat{M} := \hat{M}(p^1,p^2) := \hat{M}\cdot\mathrm{diag}(\mathbbm{1}-p^1-p^2)$ depends on $p^1,p^2$. If we set $p^1=\mathbbm{1}_A,p^2=0$ we obtain Eq.~\eqref{eq:committorA} and if we set $p^1=\mathbbm{1}_A,p^2=\mathbbm{1}_B$ we obtain Eq.~\eqref{eq:committorAB}.

The probability distribution $q$ will be referred to as \emph{fuzzy committor with respect to $p^1$ and $p^2$}. The term \emph{fuzzy} refers to the notion of fuzzy sets described by affiliation functions like $p^1$ and $p^2$, generalizing the idea of `crisp' sets commonly described by binary indicator functions.

\subsection{$\varepsilon$-committors}

Within this paper we aim to address stability questions at finite time-scales, hence asymptotic absorption probabilities are not our main interest. If the right exit probabilites are chosen the fuzzy committors turn out as a convenient tool for studying transient behaviour. 

Consider a Markov chain that has probabiliy $\varepsilon$ of being absorbed into a unique exit state at every timestep and uniformly on its state space. Define a random variable $T_\varepsilon$ as the timestep when a trajectory is absorbed into the exit state. Thus $T_\varepsilon - 1$ is the time the system spends in the original state space. Clearly $\mathbb{P}[T_\varepsilon = 0] = 0$ and the probability that a trajectory hits the exit state at timestep $k > 0$ is
\begin{equation}
 \mathbb{P}[T_\varepsilon = k] = \varepsilon (1-\varepsilon)^{k-1} \quad \forall k \geq 1.
\end{equation}
The expected value of $T_\varepsilon$ is
\begin{equation}
 \mathbb{E}[T_\varepsilon]= \sum_{k=0}^{\infty} k \, \mathbb{P}[T_\varepsilon = k] = \varepsilon \sum_{k=1}^{\infty}  k (1-\varepsilon)^{k-1} = \dots = \frac{1}{\varepsilon},
\end{equation}

and hence the inverse of the exit probability $\varepsilon$ can be considered as the expected time-horizon.

We want to know the probability that a given trajectory starting in state $i$ spends a `long' time in $A$ with respect to a finite time-scale. In Section~\ref{sec:meansojourntime} we saw that the expected mean sojourn times offer a way to answer this question. An alternative approach is to choose an exit probability $\varepsilon \in (0,1]$ as the inverse of the time-scale of interest and to define $p^1 = \varepsilon \mathbbm{1}_A, \; p^2 = \varepsilon (\one - \mathbbm{1}_{A})$ as exit probabilites for the fuzzy committors. Then Eq.~\eqref{eq:fuzzycommittor} becomes 
\begin{equation}
 (I-(1-\varepsilon)M) q_\varepsilon(A)= \varepsilon \mathbbm{1}_A
 \label{eq:epscommittor}
\end{equation}
The solution $ q_\varepsilon(A) =: q_\varepsilon$ will be referred to as $\varepsilon$\emph{-committor} of $A$, where we drop the argument $A$ if it is clear from context. Existence and uniqueness of $ q_\varepsilon$ for $\varepsilon > 0$ follow by applying the Neumann inversion formula
\begin{equation}
 q_\varepsilon(A) = \varepsilon \left( \sum_{k=0}^{\infty} (1-\varepsilon)^k M^k \right) \mathbbm{1}_A.
 \label{eq:neumanncommittor}
\end{equation}

\begin{myrem}
We can replace the target set $A$ characterized by an indicator function $\one_A$ with a generalized state described by any vector $v \in \mathbb{R}^n$, such that $\max_i \lvert v_i \rvert = 1$. The corresponding $\varepsilon$-committor is denoted as $q_\varepsilon(v)$. In this case some care has to be taken when interpreting the results.
\end{myrem}

\subsection{Properties of $\varepsilon$-committors \label{sec:furthercommis}}

Let $(Y_k)_{k \in \mathbb{N}}$ be the original process $(X_k)_{k \in \mathbb{N}}$ modified in such a way that at each step a transition to one of the exit states occurs with probability $\varepsilon$.

\begin{mylem}
The $i$-th component  of the $\varepsilon$-committor $q_{\varepsilon,i}$ is the probability that a trajectory starting in state $i$ is in set $A$ just before it exits the system. 
\end{mylem}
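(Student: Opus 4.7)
The plan is to match the Neumann expansion in Eq.~\eqref{eq:neumanncommittor} with a direct probabilistic decomposition of the event ``the chain is in $A$ immediately before exit.'' The structural fact that does the real work is that for the choice $p^1 = \varepsilon\one_A$, $p^2 = \varepsilon(\one - \one_A)$ the total exit probability satisfies $p^1_i + p^2_i = \varepsilon$ for every state $i$. Hence the exit time $T_\varepsilon$ of $(Y_k)$ is geometric with parameter $\varepsilon$ and, crucially, independent of the underlying trajectory $(X_k)$; moreover, by construction of $Z_1, Z_2$, absorption at $Z_1$ occurs if and only if the exit step finds the chain in $A$.

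Using this independence I would compute
\begin{align}
\mathbb{P}_i\!\left[X_{T_\varepsilon-1}\in A\right]
  &= \sum_{k=1}^{\infty} \mathbb{P}[T_\varepsilon = k]\,\mathbb{P}[X_{k-1}\in A \mid X_0 = i] \notag \\
  &= \sum_{k=1}^{\infty} \varepsilon(1-\varepsilon)^{k-1} (M^{k-1}\one_A)_i \notag \\
  &= \varepsilon \sum_{j=0}^{\infty}(1-\varepsilon)^j (M^j \one_A)_i,
\end{align}
where I used $\mathbb{P}[X_{k-1}\in A \mid X_0 = i] = (M^{k-1}\one_A)_i$ and then the re-indexing $j = k-1$. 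By Eq.~\eqref{eq:neumanncommittor} the right-hand side is exactly $q_{\varepsilon,i}$, which proves the claim. An equivalent route, should one prefer to bypass the series manipulation, is to observe that $q_\varepsilon$ was defined as the absorption probability into $Z_1$ in the enlarged chain and then invoke the above construction-level identification of $\{\text{absorbed at }Z_1\}$ with $\{X_{T_\varepsilon - 1}\in A\}$ directly.

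The main obstacle is bookkeeping of what ``just before exit'' means: one must fix the convention that $T_\varepsilon$ is the first step at which the chain sits in an exit state, so that $X_{T_\varepsilon - 1}$ is the last visited state in $\mathcal{X}$, and align this index with the power $M^{k-1}$ in the geometric sum. A secondary subtlety is justifying the factorization in the first line of the display, which relies on the fact that the event $\{T_\varepsilon = k\}$ depends only on a sequence of $k$ independent Bernoulli$(\varepsilon)$ coin flips that are independent of the realizations of $(X_k)$; this is precisely where the uniformity of exit probabilities is used. Once these two points are settled, the rest is a one-line rearrangement of the Neumann series already established in Eq.~\eqref{eq:neumanncommittor}.
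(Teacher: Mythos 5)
Your proof is correct and follows essentially the same route as the paper: decompose the event $\{$in $A$ just before exit$\}$ over the value of the geometric exit time, use independence of $T_\varepsilon$ from the trajectory (which holds because the exit probability is uniform over states), and match the resulting series with Eq.~\eqref{eq:neumanncommittor}. Your version is, if anything, slightly more careful than the paper's in writing the conditional probability in terms of the original chain $(X_k)$ rather than the modified chain $(Y_k)$, which avoids double-counting the survival factor $(1-\varepsilon)^k$.
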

\begin{proof}
Note that $T_\varepsilon - 1$ is the time-step before a given trajectory transitions to one of the exit states. Then
\begin{eqnarray}
\mathbb{P}[Y_{T_\varepsilon - 1}  \in  A\mid Y_0 & & = i]  = \sum_{k=0}^\infty\mathbb{P}[T_\varepsilon = k + 1]\mathbb{P}_i[Y_k\in A] \nonumber \\ 
& &=   \varepsilon \sum_{k=0}^{\infty} (1-\varepsilon)^k e_i^T M^k \mathbbm{1}_A = q_{\varepsilon,i}.
\end{eqnarray}
For the first equality we used that absorption is independent of the current state of the system.
\end{proof}

The $\varepsilon$-committor is closely related to the expected time a trajectory spends in a given set $A$. To see this, define a random variable $\tau_\varepsilon = \tau_\varepsilon(A)$ as the total time the modified process spends in the set $A$, that is
\begin{equation}
 \tau_\varepsilon = \sum_{k=0}^{\infty} \one_A(Y_k).
\end{equation}
Then the expected value of $\tau_\varepsilon$ is
\begin{align} \label{eq:expectedZ}
 \mathbb{E}[\tau_\varepsilon] & = \mathbb{E}\left[\sum_{k=0}^{\infty} \one_A(Y_k)\right] = \sum_{k=0}^{\infty} \mathbb{E}[ \one_A(Y_k)] = \sum_{k=0}^{\infty} \mathbb{P}[ Y_k \in A] \nonumber \\
 & = \sum_{k=0}^{\infty} \mathbb{P}[ X_k \in A]\cdot (1-\varepsilon)^k.
\end{align}
If we condition on an initial state $Y_0 = X_0 = i$, then Eq.~\eqref{eq:expectedZ} becomes
\begin{align}
 \mathbb{E}[\tau_\varepsilon\mid X_0=i] & = \sum_{k=0}^{\infty} \mathbb{P}_i[ X_k \in A]\cdot (1-\varepsilon)^k \nonumber \\
 &= \sum_{k=0}^{\infty} e_i^T M^k \one_A (1-\varepsilon)^k = \frac{1}{\varepsilon} \, q_{\varepsilon,i},
\end{align}
which relates the $\varepsilon$-committor to the expected time that the process spends in $A$ before absorption. 
This could be small because the process does not reach $A$, and thus $A$ is not attractive on this time scale, or because it quickly leaves $A$ again, that is, $A$ is not stable. 

If the limit $\varepsilon \to 0$ of $\frac{1}{\varepsilon} q_{\varepsilon,i}$ exists, it equals the expected time the original process $X_k$ spends in $A$. The probability distribution function of $\tau_\varepsilon(A)$ for general absorbing Markov chains is derived in~\citet{csenki2012dependability}, Corollary 2.8.

\begin{myex} (Naive model of long transients I)

 Assume that the proccess has a non-zero probability $p$ of leaving the set $A$ at every timestep, that is ${\mathbb{P}_i[X_k \in A] = (1-p)^k}$. It follows that
 \begin{equation}
  q_{\varepsilon,i} = \frac{\varepsilon}{1-(1-\varepsilon)(1-p)},
 \end{equation}
which can be solved for $p$ in order to obtain the leak rate as a function of the $\varepsilon$-committor.
\end{myex}

\begin{myex} (Naive model of long transients II) \label{ex:longtransientsiii}
 
 More generally speaking, a long transient set has the property that  $f(k):=\mathbb{P}_i[X_k \in A]$ tends to 0 with increasing $k$. For simplicity assume that $f(k)$ is monotonically decreasing and that $\varepsilon = 1/N$ for $N \in \mathbb{N}$. Then we have the following inequality between $s_N$, the EMS time with horizon $N$ and the $\varepsilon$-committor.
 
\begin{align}
 q_{\varepsilon,i} & =  \varepsilon \sum_{k=0}^{\infty} f(k) (1-\varepsilon)^k \nonumber \\
  &\leq \varepsilon \sum_{k=0}^{N-1} (f(k)-f(N)) (1-\varepsilon)^k + \varepsilon f(N) \sum_{k=0}^{\infty} (1-\varepsilon)^k \nonumber \\
  &\leq \frac{1}{N}\sum_{k=0}^{N-1} (f(k)-f(N)) + f(N) \nonumber \\
  &\leq \frac{1}{N} \sum_{k=0}^{N-1} \mathbb{P}_i[X_k \in A] = s_{N,i}.
\end{align}
Thus, in this case, the $\varepsilon$-committor is a lower bound for the EMS time with horizon $\varepsilon^{-1}$. If $f$ has an initial period where it increases before it eventually decreases monotonically, then the same reasoning applies if $N$ is chosen large enough. 
\end{myex}

The $\varepsilon$-committors and EMS times give in many ways similar information on the system's transient behaviour. The former performs a geometric averaging along the whole trajectory, while the latter just takes the mean value with respect to a finite time interval. In Section \ref{sec:differenceestimates} we obtain exact difference estimates between both quantities for specific metastable states. For the asymptotic case, that is if the (expected) time-horizon tends to infinity, we show in the next Section that both quantities have the same limit, and that this limit equals the indicator function on the basin of attraction if the underlying system is deterministic, compare Example~\ref{ex:emstulamfix}. This limit behaviour suggests that both quantities can be used as the sought generalizations of the concept of basin of attraction to stochastic systems and metastable phenomena.
Motivated by these considerations we define $\varepsilon$\emph{-absorption stability} $b_\varepsilon(A)$ of a set $A$ with respect to a probability measure $\rho \in \mathbb{R}^n$ as 
\begin{equation} \label{eq:epsabs}
 b_\varepsilon(A) = \rho^T q_\varepsilon = \varepsilon \sum_{k=0}^{\infty} (1-\varepsilon)^k \rho^T M^k \one_A.
\end{equation}

Regarding computational issues there is a decisive advantage of the $\varepsilon$-committors over the EMS time. As a consequence of the geometric averaging and the Neumann inversion formula, the operator 
\begin{equation}
  \varepsilon \sum_{k=0}^{\infty} (1-\varepsilon)^k M^k
\end{equation}
is invertible with inverse $ I - (1-\varepsilon) M$, and hence the $\varepsilon$-committor can be computed as the solution of the linear system~\eqref{eq:epscommittor} with equal complexity for every choice of $\varepsilon$. This differs greatly from the case of the finite-horizon EMS times where the corresponding operator is not invertible and with increasing time horizon the number of required matrix-vector multiplications increases as well. Future research will show whether these ideas can be combined with the results on the infinitesimal generator\cite{koltai2011stochastic, froyland2013estimating} in order to achieve a trajectory-free computation of the $\varepsilon$-committors.

\section{Convergence Results}\label{sec:converg}

In order to show that $\varepsilon$-committor and EMS time are natural generalizations of the notion of the basin of attraction, we will now prove convergence results for the geometric, respectively ergodic averages of operators related to them. We will see that under some assumptions the geometric, respectively ergodic averages of an operator $O$ converge to a projection onto the fixed space of $O$. Recall that if $O$ is a Koopman operator or an approximation thereof knowing its fixed space is equivalent to knowing, respectively approximating, the basin structure of the underlying dynamical system (see also Section~\ref{sec:ctsop} and Example~\ref{ex:emstulamfix}). These results imply that the quantities that we propose as notions of ``stochastic basins of attraction'', namely $\varepsilon$-committors and EMS times, converge back to the classical basins of attraction in the limiting cases.

Define for a bounded, linear operator $O$ on a general Hilbert space $H$ the ergodic mean $S_N[O] = \frac{1}{N} \sum_{k=0}^{N-1} O^k$ for $N\in \mathbb{N}$ and similarly the geometric mean $ C_\varepsilon (O) = \varepsilon \sum_{k=0}^\infty (1-\varepsilon)^k O^k$ for $\varepsilon \in (0,1]$. Then we have the following convergence results. 
\begin{mythm} (von Neumann mean ergodic theorem)

Let $O$ be a contraction, that is $\lVert Ov \rVert \leq \lVert v \rVert $ for all $v \in H$, then

\begin{equation}
 \lim_{N\to\infty} S_N[O]v = P_{\fix(O)}v = \lim_{\varepsilon\to 0} C_\varepsilon[O]v \quad \forall v \in H ,
\end{equation}
where $P_{\fix(O)}: H \to {\fix(O)}$ is the orthogonal projection onto the subspace ${\fix(O) = \{v \in H \mid Ov=v \}}$.
\label{thm:meanergo}
\end{mythm}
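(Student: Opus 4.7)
The plan is to carry out the classical two-summand decomposition: show $H = \fix(O)\oplus\overline{\im(I-O)}$ and verify that both $S_N[O]$ and $C_\varepsilon[O]$ act as the identity on the first summand and tend strongly to zero on the second, with $P_{\fix(O)}$ identified as the orthogonal projection along that decomposition.

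First I would establish $\fix(O) = \fix(O^*)$ using only that $O$ is a contraction (hence so is $O^*$). If $Ov = v$, then from $\|v\|^2 = \langle Ov,v\rangle = \langle v, O^*v\rangle \leq \|v\|\,\|O^*v\| \leq \|v\|^2$, the Cauchy--Schwarz equality condition forces $O^*v = v$, and the reverse implication is symmetric. From this, for $u \in \fix(O)$ and $w = (I-O)z$ one has $\langle u, w\rangle = \langle u, z\rangle - \langle O^*u, z\rangle = 0$, giving $\fix(O) \perp \im(I-O)$. Moreover, if $u$ is orthogonal to $\im(I-O)$, then $\langle (I-O^*)u, z\rangle = 0$ for all $z$, so $u \in \fix(O^*) = \fix(O)$. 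This yields the orthogonal decomposition $H = \fix(O) \oplus \overline{\im(I-O)}$, so it suffices to prove the two limits coincide with $P_{\fix(O)}$ on each summand separately.

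On $\fix(O)$ both averages act as the identity: $S_N[O]v = v$ and $C_\varepsilon[O]v = \varepsilon \sum_{k\geq 0}(1-\varepsilon)^k v = v$. On $\im(I-O)$ I compute the action on $w=(I-O)z$ directly. For the ergodic average, a telescoping sum gives $S_N[O](I-O)z = \tfrac{1}{N}(z - O^N z)$, whose norm is bounded by $2\|z\|/N \to 0$. For the geometric average, an index shift yields the Abel-type identity
\begin{equation}
C_\varepsilon[O](I-O)z = \varepsilon z - \varepsilon\, O\, C_\varepsilon[O]z,
\end{equation}
and since $\|C_\varepsilon[O]\|\leq \varepsilon\sum_{k\geq 0}(1-\varepsilon)^k = 1$ and $\|O\|\leq 1$, the right-hand side is bounded in norm by $2\varepsilon \|z\| \to 0$.

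The main obstacle is upgrading pointwise convergence on the dense subspace $\fix(O)\oplus\im(I-O)$ to strong convergence on all of $H$, i.e.\ handling vectors in $\overline{\im(I-O)}$ that are not themselves in $\im(I-O)$. Here I would use the uniform bounds $\|S_N[O]\|\leq 1$ and $\|C_\varepsilon[O]\|\leq 1$ together with a standard $\varepsilon/3$ approximation argument: given $v \in \overline{\im(I-O)}$ and $\delta > 0$, pick $w \in \im(I-O)$ with $\|v-w\| < \delta/3$, then use $\|S_N[O]v\| \leq \|S_N[O](v-w)\| + \|S_N[O]w\| \leq \delta/3 + \|S_N[O]w\|$, and similarly for $C_\varepsilon[O]$; choosing $N$ large or $\varepsilon$ small makes the second term small. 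Combined with the trivial action on $\fix(O)$, this identifies both limits with the orthogonal projection $P_{\fix(O)}$ dictated by the decomposition, completing the proof.
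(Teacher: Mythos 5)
Your proposal is correct and follows essentially the same route as the paper's own argument (given in Appendix~\ref{sec:ergocont}): the duality lemma $\fix(O)=\fix(O^*)$ for contractions, the orthogonal decomposition $H=\fix(O)\oplus\overline{\im(I-O)}$, telescoping respectively Abel-type identities to kill the range part, and a density argument using the uniform bounds $\lVert S_N[O]\rVert\leq 1$, $\lVert C_\varepsilon[O]\rVert\leq 1$. No gaps; this is the standard Riesz-style proof the paper also employs.
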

\begin{proof}
 The first equality is known as von Neumann's ergodic theorem\cite{krengel1985ergodic}. The second equality is proven analogously, see Appendix \ref{sec:ergocont}.
\end{proof}

\begin{myex} 
Any Markov operator and in particular the Perron-Frobenius operator $\pfo$ are by definition contractions on $L^1(X,\mu)$, compare Section~\ref{sec:markovop}. If $\mu$ is an invariant measure then $\pfo$, resp. the Koopman operator $\koo$, can be defined such that they are contractions on the Hilbert space $L^2(X,\mu)$, see \citet{lasota2013chaos} and \citet{eisner2015operator}.
\end{myex}

The same results hold for bounded, linear operators $O$ that can be written as the sum of a unitary and a part with spectral radius smaller than 1, see Appendix \ref{sec:ergodeco} for a proof.

\begin{mythm}
  Assume $H = \mathcal{U} \oplus \mathcal{V}$ is the direct sum of $O$-invariant closed subspaces $\mathcal{U}$ and $\mathcal{V}$, such that $O|_\mathcal{U}$ is unitary and $O|_\mathcal{V}$ has spectral radius smaller than 1, then
 
 \begin{equation}
  \lim_{N \to \infty} S_N[O]v = P_{\fix(O)}v = \lim_{\varepsilon \to 0} C_\varepsilon[O]v \quad \forall v \in H.
  \label{eq:operatorlimits}
 \end{equation}
\label{thm:operatorlimits}
\end{mythm}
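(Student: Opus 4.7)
The plan is to exploit the decomposition $H = \mathcal{U} \oplus \mathcal{V}$ to split the problem into two independent pieces, handle each on its respective invariant subspace, and then recombine. Since $\mathcal{U}$ and $\mathcal{V}$ are both $O$-invariant and closed, the operators $S_N[O]$ and $C_\varepsilon[O]$ respect the decomposition: writing $v = u + w$ with $u \in \mathcal{U}$ and $w \in \mathcal{V}$, we have $S_N[O]v = S_N[O|_\mathcal{U}]u + S_N[O|_\mathcal{V}]w$ and likewise for $C_\varepsilon[O]$, so it suffices to analyse each summand separately.

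On $\mathcal{U}$, the operator $O|_\mathcal{U}$ is unitary, hence in particular a contraction, so Theorem~\ref{thm:meanergo} applies directly and yields
\begin{equation}
 \lim_{N\to\infty} S_N[O|_\mathcal{U}]u = P_{\fix(O|_\mathcal{U})}u = \lim_{\varepsilon\to 0} C_\varepsilon[O|_\mathcal{U}]u.
\end{equation}
On $\mathcal{V}$, the key fact is that $r(O|_\mathcal{V}) < 1$: by Gelfand's formula we may fix $\rho \in (r(O|_\mathcal{V}),1)$ and $M > 0$ such that $\lVert O|_\mathcal{V}^k \rVert \leq M\rho^k$ for every $k$. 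This immediately gives the routine estimates
\begin{equation}
 \lVert S_N[O|_\mathcal{V}]w \rVert \leq \frac{M}{N(1-\rho)}\lVert w \rVert, \qquad \lVert C_\varepsilon[O|_\mathcal{V}]w \rVert \leq \frac{\varepsilon M}{1-(1-\varepsilon)\rho}\lVert w \rVert,
\end{equation}
both of which tend to $0$ as $N \to \infty$ and $\varepsilon \to 0$ respectively. Moreover, since $1 \notin \sigma(O|_\mathcal{V})$ the operator $I - O|_\mathcal{V}$ is invertible on $\mathcal{V}$, so $\fix(O|_\mathcal{V}) = \{0\}$ and hence $\fix(O) = \fix(O|_\mathcal{U})$. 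Thus the natural projection that emerges from the two limits, namely the map sending $u+w \mapsto P_{\fix(O|_\mathcal{U})}u$, is precisely the projection onto $\fix(O)$ along $\mathcal{V}$ composed with the orthogonal projection onto $\fix(O|_\mathcal{U})$ inside $\mathcal{U}$; this agrees with $P_{\fix(O)}$ in the sense of the theorem statement.

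Adding the two limits yields the desired equality~\eqref{eq:operatorlimits}. The only non-trivial ingredient is the spectral radius input that gives a geometric norm bound on $O|_\mathcal{V}^k$; once that bound is in hand, the estimates on $\mathcal{V}$ are straightforward and the contribution from $\mathcal{U}$ is handled entirely by the previous theorem, so I expect no substantive obstacle beyond invoking Gelfand's formula correctly.
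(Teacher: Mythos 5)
Your proposal is correct and follows essentially the same route as the paper: decompose $H=\mathcal{U}\oplus\mathcal{V}$, apply the contraction mean ergodic theorem to the unitary part, show the small-spectral-radius part dies in both averages, and identify $\fix(O)=\fix(O|_\mathcal{U})$ because $1\notin\sigma(O|_\mathcal{V})$. The only cosmetic difference is that you extract a geometric bound $\lVert O|_\mathcal{V}^k\rVert\leq M\rho^k$ directly from Gelfand's formula and sum the series explicitly, whereas the paper first passes through the equivalence $r<1\Leftrightarrow\lVert V^k\rVert\to 0$ and then uses a $\delta$-splitting of the Ces\`aro sum; both rest on the same spectral-radius input.
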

Most relevant for our applications is the case when $O$ is a stochastic matrix representing a Markov chain and $H$ is $\mathbb{R}^n$. Then the last theorem yields the following result, which we prove in Appendix \ref{sec:ergomat}.
\begin{mythm}
 Let $M \in \mathbb{R}^{n \times n}$ be a stochastic matrix and let $Q \in \mathbb{R}^{n \times n}$ be invertible, such that $J=Q^{-1}MQ$ is the Jordan normal form of $M$, then
 \begin{equation}
  \lim_{N \to \infty} S_N[M] = \lim_{\varepsilon \to 0} C_\varepsilon[M] = P_{\fix(M)},
 \end{equation}
 where $P_{\fix(M)}$ is the projection onto ${\fix(M)}= \{v \in \mathbb{R}^n \mid v^T M = v^T\}$ given by
 \begin{equation}
  P_{\fix(M)} = Q^{-1} P_{\fix(J)} Q,
 \end{equation}
and $P_{\fix(J)}$ is an orthogonal projection.
\end{mythm}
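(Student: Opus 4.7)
The plan is to conjugate to the Jordan normal form so that Theorem~\ref{thm:operatorlimits} applies. Since $M^k = Q J^k Q^{-1}$ for every $k \ge 0$, one has
\begin{equation*}
S_N[M] = Q\, S_N[J]\, Q^{-1}, \qquad C_\varepsilon[M] = Q\, C_\varepsilon[J]\, Q^{-1},
\end{equation*}
so it suffices to show that $S_N[J]$ and $C_\varepsilon[J]$ both converge to the same orthogonal projection $P_{\fix(J)}$; the conjugation formula relating $P_{\fix(M)}$ to $P_{\fix(J)}$ then follows by transporting the equivalence $v^T M = v^T \iff (Q^T v)^T J = (Q^T v)^T$ back through $Q$.

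The one place the stochasticity of $M$ is genuinely needed is to control the Jordan blocks that lie on the unit circle. Every eigenvalue $\lambda$ of a row-stochastic matrix satisfies $|\lambda| \le 1$ by Gershgorin, and any Jordan block associated to such a $\lambda$ with $|\lambda|=1$ must be $1 \times 1$. Since each power $M^k$ is itself stochastic, $\sup_k \lVert M^k \rVert_\infty = 1$, so $\sup_k \lVert J^k \rVert < \infty$. Writing a block as $J_m(\lambda) = \lambda I + N$ with $N$ the nilpotent super-diagonal shift, the binomial expansion
\begin{equation*}
J_m(\lambda)^k = \sum_{j=0}^{m-1}\binom{k}{j}\lambda^{k-j} N^j
\end{equation*}
has a super-diagonal entry of modulus $\binom{k}{1}=k$ whenever $m \ge 2$, which is unbounded in $k$, a contradiction. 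After reordering blocks we may therefore decompose $J = J_U \oplus J_V$, with $J_U$ diagonal with unimodular entries and $J_V$ of spectral radius strictly less than one. This step is the main obstacle: for a generic bounded operator with spectral radius one, Theorem~\ref{thm:operatorlimits} need not apply, and it is precisely the power-boundedness inherited from stochasticity that collapses the unit-circle part of the Jordan form to a semisimple piece.

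The decomposition above realises $\mathbb{C}^n = \mathcal{U} \oplus \mathcal{V}$ as the direct sum required by Theorem~\ref{thm:operatorlimits}: $J|_\mathcal{U} = J_U$ is a diagonal unitary with respect to the standard Hermitian inner product, and $J|_\mathcal{V} = J_V$ has spectral radius strictly less than one. The theorem then yields $\lim_{N\to\infty} S_N[J] = \lim_{\varepsilon\to 0} C_\varepsilon[J] = P_{\fix(J)}$. Moreover $\fix(J)$ is spanned by the coordinate vectors indexed by those $1\times 1$ blocks whose eigenvalue equals $1$, so $P_{\fix(J)}$ is represented by a diagonal $0/1$ matrix, which is self-adjoint and hence orthogonal. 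Conjugating back by $Q$ delivers the claim.
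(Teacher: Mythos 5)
Your argument is correct and follows essentially the same route as the paper: conjugate to the Jordan normal form, split $J$ into a unitary (diagonal, unimodular) part and a part of spectral radius strictly less than one, apply Theorem~\ref{thm:operatorlimits}, and transport the projection back through $Q$. The one place you go beyond the paper is welcome: the paper merely asserts (as a separate, unproved theorem) that unit-modulus eigenvalues of a stochastic matrix are semisimple, whereas you derive it from power-boundedness via the binomial expansion of Jordan block powers, which makes the proof self-contained.
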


\begin{myrem}
As for any closed subspace of a Hilbert space, there exists an orthogonal projection onto $\fix(M)$. 
However the projection $P_{\fix(M)}$ that we get from the theorem is in general not orthogonal.
This happens to be so, since stochasticity of a matrix is a property that only holds with respect to a certain basis of $\mathbb{R}^n$, namely the standard normal basis, where every basis vector corresponds to a certain state of the associated Markov chain. 
On the other hand the spectrum of a linear operator is independent of the basis and the theorem is mainly a consequence of the spectral properties of $M$.
We obtain $P_{\fix(M)}$ by switching to a suitable basis, such that $M$ has Jordan normal form, which allows us to apply the results of the previous theorem. \citet{meyer2000matrix} gives an explicit characterization of $P_{\fix(M)}$ in terms of sub-matrices of $M$. 
\end{myrem}

Above we saw that the right fixed points of the transition matrix associated to a Perron-Frobenius operator are expected to be almost constant on the basins, compare Section~\ref{sec:ctsop} and Example~\ref{ex:emstulamfix} (note that by duality these eigenvectors correspond to fixed points of the Koopman operator). For general Markov chains \citet{deuflhard2005robust} give some intuition on the structure of the right 1-eigenvectors.
In the ideal case of a Markov chain consisting of several uncoupled sub-chains, the right 1-eigenvectors will be constant on the irreducible components. If a Markov chain has several metastable states and transitions between these states are rare events, then it can be thought of as a small perturbation of such an ideal chain. \citet{deuflhard2005robust} show that for nearly uncoupled chains the perturbed 1-eigenvectors have eigenvalues close to 1 and are almost constant on the metastable components. They exploit this fact to approximate metastable states. However in the presence of long transients this constant level pattern is in general not preserved and more complex algorithms are required\cite{roblitz2013fuzzy, weber2015g}.

\begin{myrem} (Transient behaviour)

 We saw above that the asymptotic behaviour of EMS times and $\varepsilon$-committors is the same. In the presence of metastability we are able to derive an explicit expression for the difference between both quantities in the case that $\varepsilon = \frac{2}{N+1}$, see Appendix~\ref{sec:differenceestimates}. We find that for metastable states the difference is close to 0 for small $N$, then increases with growing $N$ and finally converges to 0 as expected. Overall the difference remains small. In the presence of a pronounced spectral gap, the combined error reaches a local minimum when $\varepsilon$ is chosen in between the different time scales.
\end{myrem}


\section{Generalized basin stability \label{sec:gbs}}

In this section we will define a generalized notion of basin stability that can be evaluated by sampling. In the preceding sections we studied discrete systems, and it is more technically involved to generalize the results rigorously to continuous time and state space. In contrast, the sampling procedure we consider here immediately generalizes to continuous time and state space. It also points towards a wider variety of ``$\varepsilon$-commitor-like functions'' that might be of interest for further study.

Basin stability is the probability that a deterministic system returns to a desirable attractor after a perturbation. Typically the perturbations are described by a probability density on phase space $\rho_\text{pert}(x)$. The basin stability $b$ is then simply given by the integral of the characteristic function of the basin $B$ with respect to $\rho_\text{pert}(x)$:

\begin{equation}
 b = \int \mathbbm{1}_B(x) \rho_\text{pert}(x) dx
\end{equation}

This integral can be evaluated using Monte-Carlo integration. Alternatively we can interpret the sampling directly as a Bernoulli experiment, drawing initial conditions and observing whether or not the system returns to the attractor. Crucially, we do not need to know the shape of the basin to estimate $b$. The relative accuracy of the unbiased estimator $\hat b (N_b)$ obtained by sampling $N_b$ trajectories is asymptotically small, and independent of system details. Specifically the standard error of the estimator is given by:

\begin{equation}
\label{eq:sampling-error}
 \sigma_{\hat b (N_b)} = \sqrt{\frac{\hat b (N_b) (1 - \hat b (N_b))}{N_b}} + O(N_b^{-1}) \;.
\end{equation}

In the case of general, not necessarily deterministic dynamics, the generalized basin stability of a set $A$ can be defined as 

\begin{equation}
\label{eq:gen_basin_stability}
 b_\text{gen} = \int q_\text{gen}(x) \rho_\text{pert}(x) dx \;,
\end{equation}
where $q_\text{gen}$ is the generalized membership function of the basin of $A$. In particular we can choose $q_\text{gen}$ to be the epsilon commitor $q_\varepsilon(x)$ or the expected mean soujourn time $s_T(x)$.

A Monte-Carlo estimation of this integral would be more expensive, as, for a stochastic system, $q_\varepsilon(x)$ or $s_T(x)$ can not be evaluated using only a single experiment. However, we can again design a Bernoulli experiment with expected probability $b_\text{gen}$. The experiment is as follows: Draw an initial condition from $\rho_\text{pert}$, run the system for a randomly chosen time $t$ and then check if it is in $A$ at that time.

To see this, note that the generalized membership functions themselves have the interpretation as the probability of a Bernoulli experiment. They correspond to the probability to run to $A$ in time $t$ when starting from some initial condition $x$ if we draw $t$ from an appropriate choice distribution $\rho_\text{run}$. For $q_\varepsilon(x)$ we take the exit time distribution $\epsilon e^{-\epsilon t}$ as the run time distribution $\rho_\text{run}$. This amounts simply to reinterpreting the exit from the system as run duration. For $s_T(x)$ we take the distribution of run times to be the equidistribution on the time interval $[0,T]$, so that the expectation value is equivalent to averaging in the time interval. These definitions in terms of probabilities naturally extend to continuous times:

\begin{align}
 q_\varepsilon(x) &= \int dt \, p(x(t) \in A|x(0) = x) \, p(t_\text{exit} = t)\nonumber\\
&= \int dt \, p(x(t) \in A|x(0) = x) \, \varepsilon e^{-\varepsilon t}\nonumber\\
 s_T(x) &= \int dt \, p(x(t) \in A|x(0) = x) \, \frac{1}{T}\mathbbm{1}_{[0,T]}(t)
\end{align}

Now the integral in \eqref{eq:gen_basin_stability} is simply given by drawing the initial condition from $\rho_\text{pert}$:
\begin{align}
 b_\text{gen} &= \int dt \int dx \, p(x(t) \in A|x(0) = x) \, \rho_\text{run}(t) \, \rho_\text{pert}(x)\;\nonumber\\
 &= p(x(T) \in A| p(x(0)) = \rho_\text{pert}, p(t) = \rho_\text{run})
\end{align}

This is the probability of a Bernoulli experiment, and thus can be studied by sampling again. We see immediately that this is true for a large class of such measures, namely, for all distributions of the evaluation time that are efficient to sample. Among these the two concepts developed in this paper are distinguished by taking the evaluation time to be either given by a constant stopping rate, or by a constant function.

This experiment will have the same variance of the estimator as the deterministic basin stability Eq. \eqref{eq:sampling-error}. Note further that after obtaining a sample of trajectories it is possible to evaluate the generalized basin stabilities for different sets on this sample, with each individual error is given by Eq. \eqref{eq:sampling-error}. However, the errors will be correlated, making it hard to do statistics on the various measures thus obtained.

\section{Examples\label{sec:applications}}
\subsection{Conceptual box models}

\begin{figure}
 \includegraphics[height=90pt]{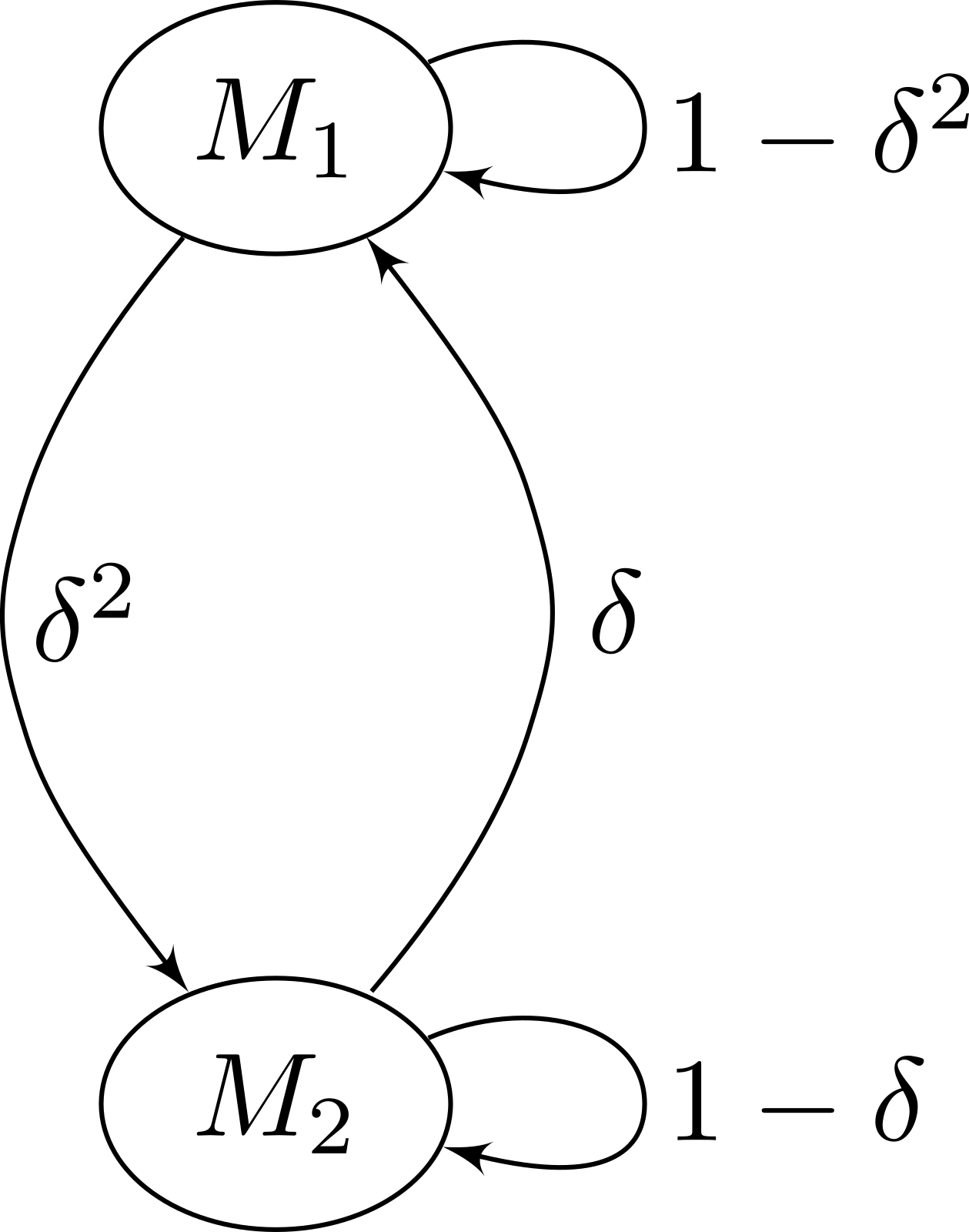}
 \hfill
 \includegraphics[height=90pt]{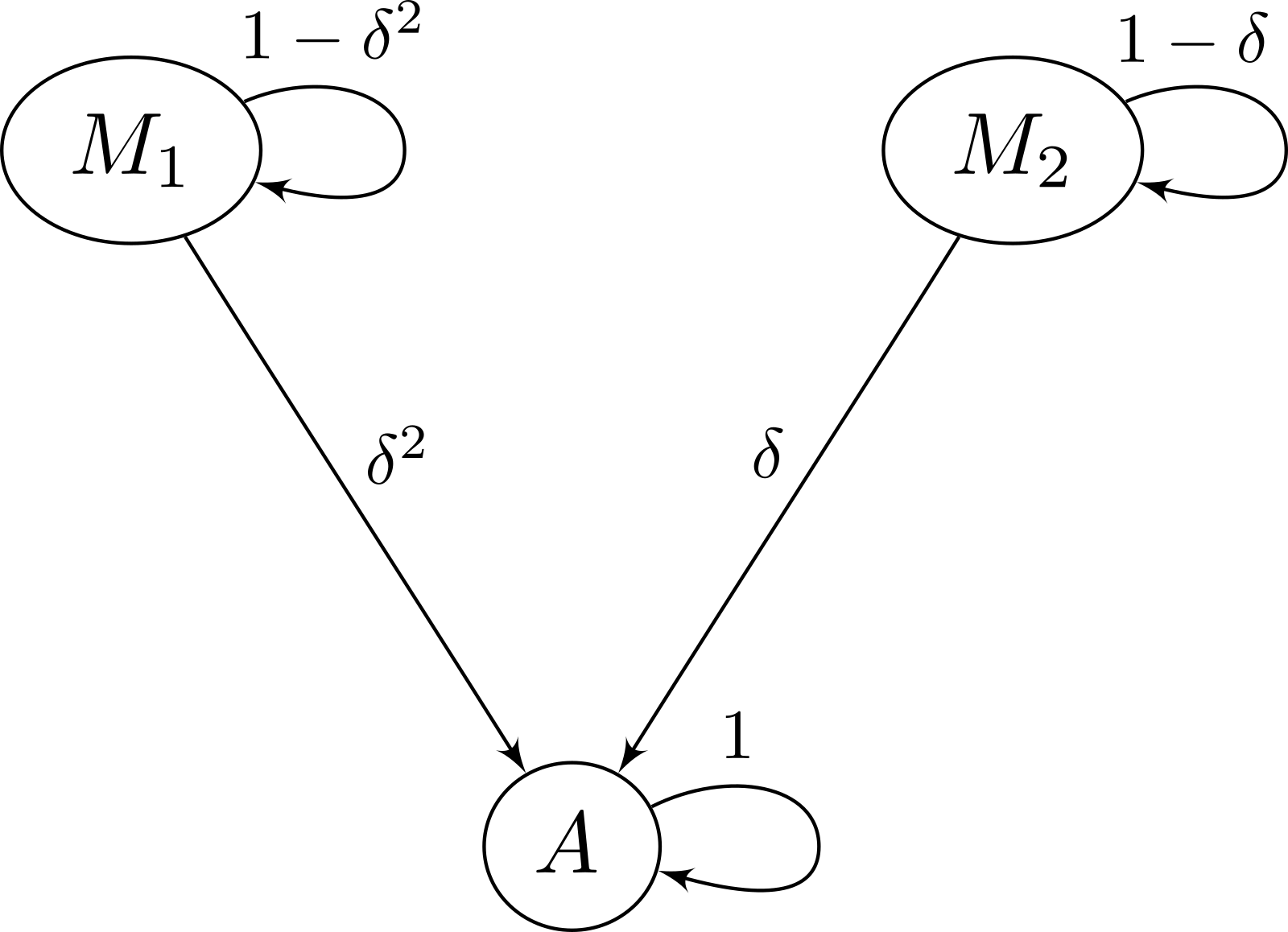}
 \caption{Conceptual models of metastability and long transients}
 \label{fig:conceptual}
\end{figure}
Figure~\ref{fig:conceptual} shows the state-transition diagrams of two very simple Markov chains. One of them consists of two almost-invariant states $M_1$ and $M_2$ and provides a conceptual model of metastability in an ergodic systems. The other one contains an attractor A and two transient states $M_1$ and $M_2$, where trajectories spend a long time before converging to A. The second model illustrates the concept of long transient states in dissipative systems.
The transition matrices are
\begin{equation}
 \begin{pmatrix}
  1 - \delta^2 & \delta^2    \\
  \delta       & 1 - \delta
 \end{pmatrix}\quad \mathrm{and} \quad 
 \begin{pmatrix}
  1 - \delta^2 & 0          & \delta^2 \\
  0            & 1 - \delta & \delta   \\
  0            & 0          & 1
 \end{pmatrix}.
\end{equation}

We compute $\varepsilon$-absorption stability with respect to normalized Lebesgue measure for these systems according to Eq.~\eqref{eq:epsabs} by solving
\begin{equation}
 b_\varepsilon(i) = \mathbbm{1}^T\frac{\varepsilon}{n}(I - (1-\varepsilon)M)^{-1}e_i,
\end{equation}
 where $M$ is the transition matrix and $e_i$ is the standard basis vector corresponding to state $i=M_1,M_2,A$ and $n=2,3$. The parameter $\delta$ controls the time-scales and is chosen to be 0.01 in the metastability model and 0.0001 in the long transients model. Figure~\ref{fig:conceptualepsabs} shows $\varepsilon$-absorption stability of the different states for varied $\varepsilon$.
 
 The limits of $\varepsilon(I - (1-\varepsilon)M)^{-1}$ for $\varepsilon$ to 0 are
 \begin{equation}
 \frac{1}{\delta^2 + \delta }
 \begin{pmatrix}
  \delta & \delta^2    \\
  \delta & \delta^2
 \end{pmatrix}\quad \mathrm{and} \quad 
 \begin{pmatrix}
  0            & 0          & 1 \\
  0            & 0          & 1 \\
  0            & 0          & 1
 \end{pmatrix},
\end{equation}
 and it follows that the 0-absorption stability $b_0$ is
 \begin{equation}
  b_0 = \lim_{\varepsilon \to 0} b_\varepsilon =   \frac{1}{\delta^2 + \delta }
  \begin{pmatrix}
  \delta           \\
  \delta^2         \\
 \end{pmatrix} \quad \mathrm{and} \quad 
  \begin{pmatrix}
  0            \\
  0            \\
  1
 \end{pmatrix},
 \end{equation}
 which are just the invariant distributions. This shows that for the long transient model we recover the usual basin stability value in the $\varepsilon$ to $0$  limit. For the metastability model basin stability is not well-defined since trajectories never converge to an attractor. In this case $\varepsilon$-absorption stability converges to the invariant distribution.

Figure~\ref{fig:conceptualepsabs}b shows that for finite $\varepsilon$ the attraction of a region on that time scale is accurately captured. With $\varepsilon$ between $\delta$ and $\delta^2$, which are indicated by the vertical lines, the committor sees that region $A$ is attracting $M_2$, but not $M_1$. Conversely, on these timescales, $M_1$ is stable.
 
\begin{figure}
 \includegraphics[width=0.49\columnwidth]{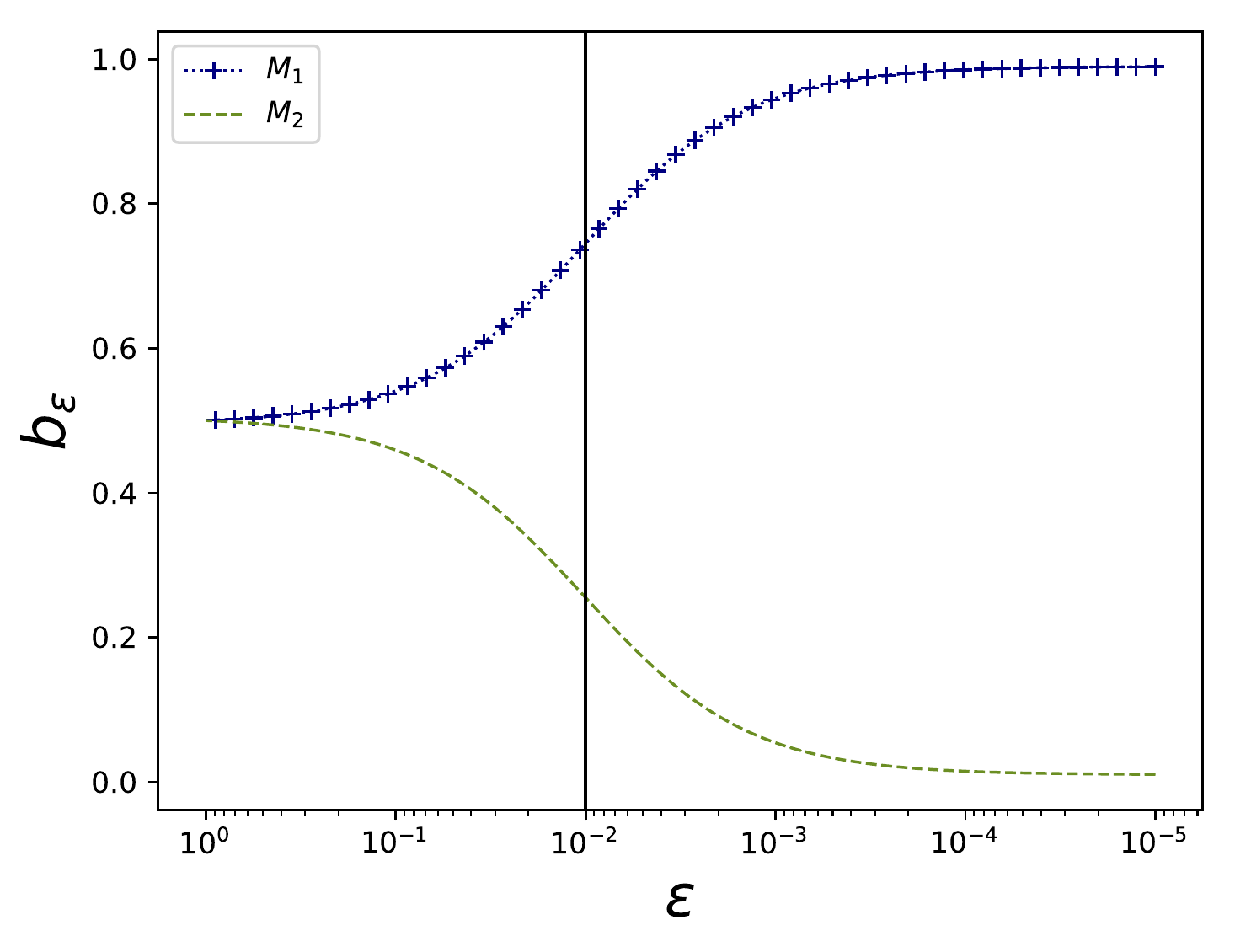}
 \hfill
 \includegraphics[width=0.49\columnwidth]{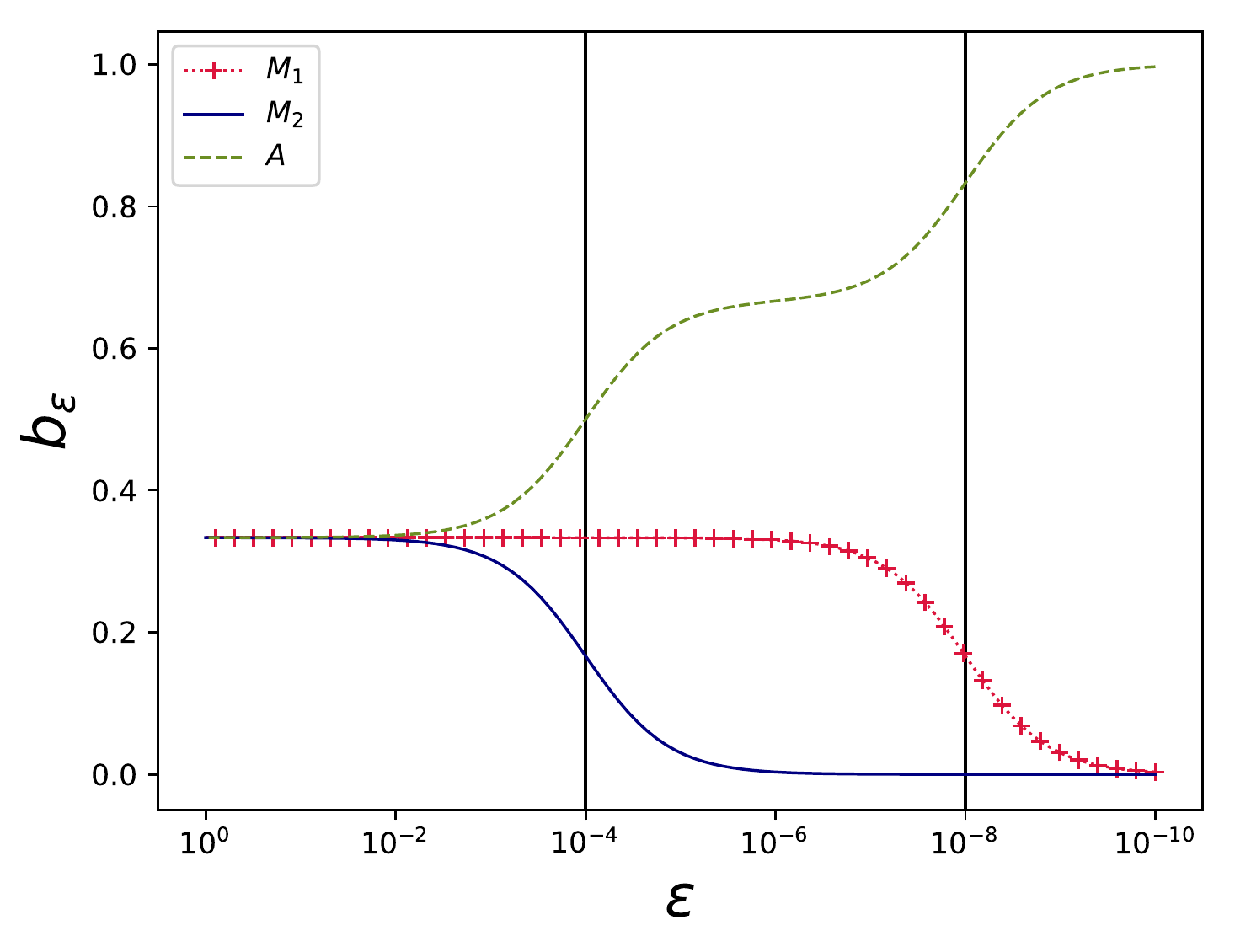}
 \caption{$\varepsilon$-absorption stability for the model with almost-invariant states (left) and long transient states (right). The initially identical values converge asymptotically to the invariant distribution with decreasing $\varepsilon$. For the metastability model the invariant distribution on $M_1$ is close to 1, but strictly smaller. In the long transient model $b_\varepsilon(M_1)$ stays almost constant over a large interval of $\varepsilon$, since the leak rate $\delta^2$ is very small. The horizotal lines indicate the value of $\delta$, respectively $\delta^2$.}
\label{fig:conceptualepsabs}
\end{figure}

\subsection{Damped driven pendulum}
The following sytem of equations describes the dynamics of a damped driven pendulum\cite{menck2013basin} and is used in classical power grid models to model a single generator~\cite{menck2014dead}.
\begin{align}
 \dot{\phi }  &= \omega \nonumber \\
 \dot{\omega} &= -\alpha\omega + P - K\sin\phi,
\end{align}

The parameter values are $\alpha=0.1, K=1$ and $P=0.5$. The system has a stable fixed point at $(\phi,\omega) = (\sin^{-1} \frac{P}{K},0)$ and a stable limit cycle at approximately $(\phi,\omega) \approx (\phi, 5)$, compare Figure~\ref{fig:ddp} for a plot of the phase space.
\begin{figure} 
\centering
 \includegraphics[width=0.6\columnwidth]{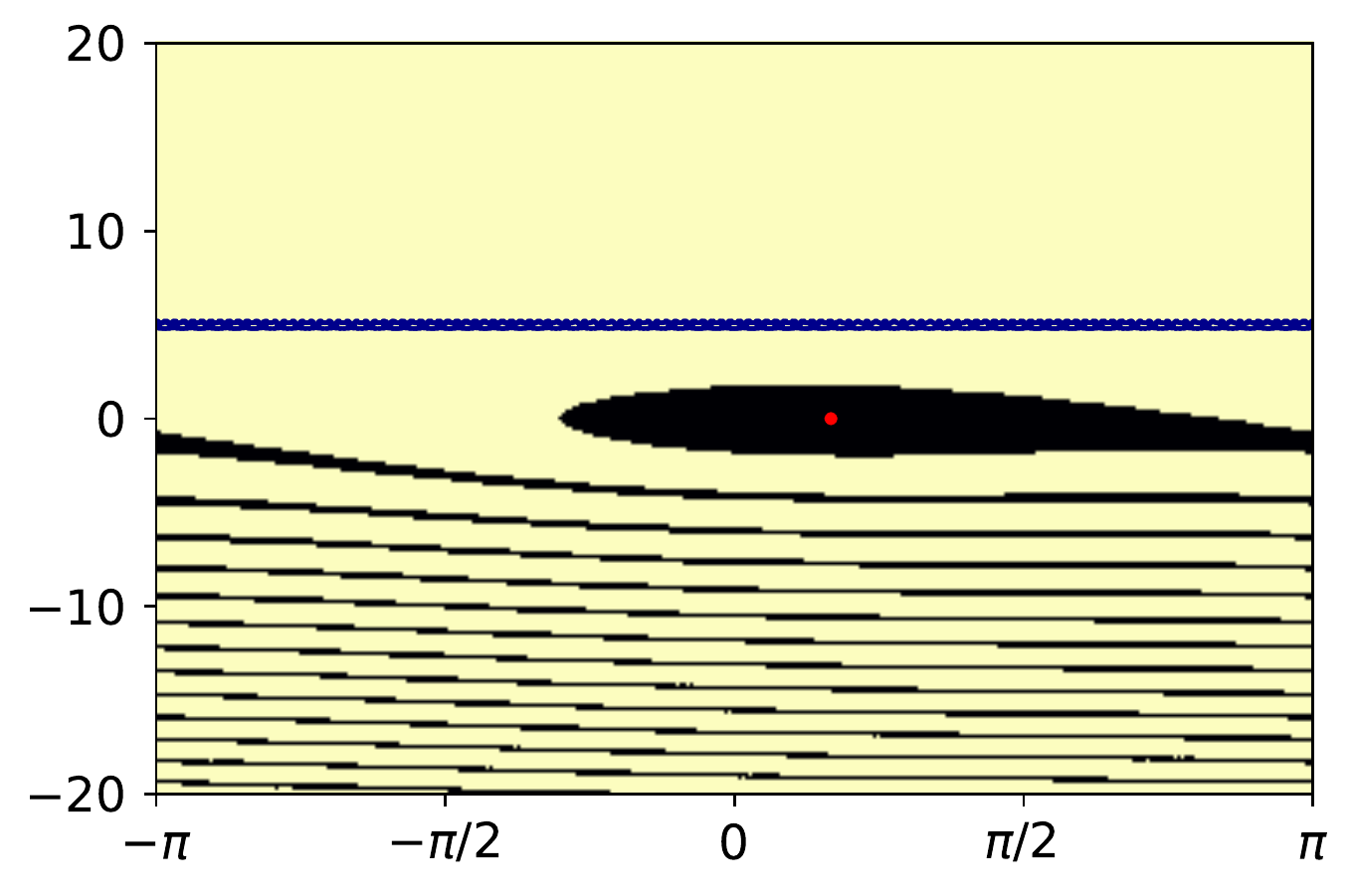}
 \caption{Basins of attraction with corresponding fixed point (red) and limit cycle (blue) of the damped driven pendulum at parameter values $\alpha=0.1, K=1$ and $T=0.5$.}\label{fig:ddp}
\end{figure}

\begin{figure} 
\includegraphics[width=0.49\columnwidth]{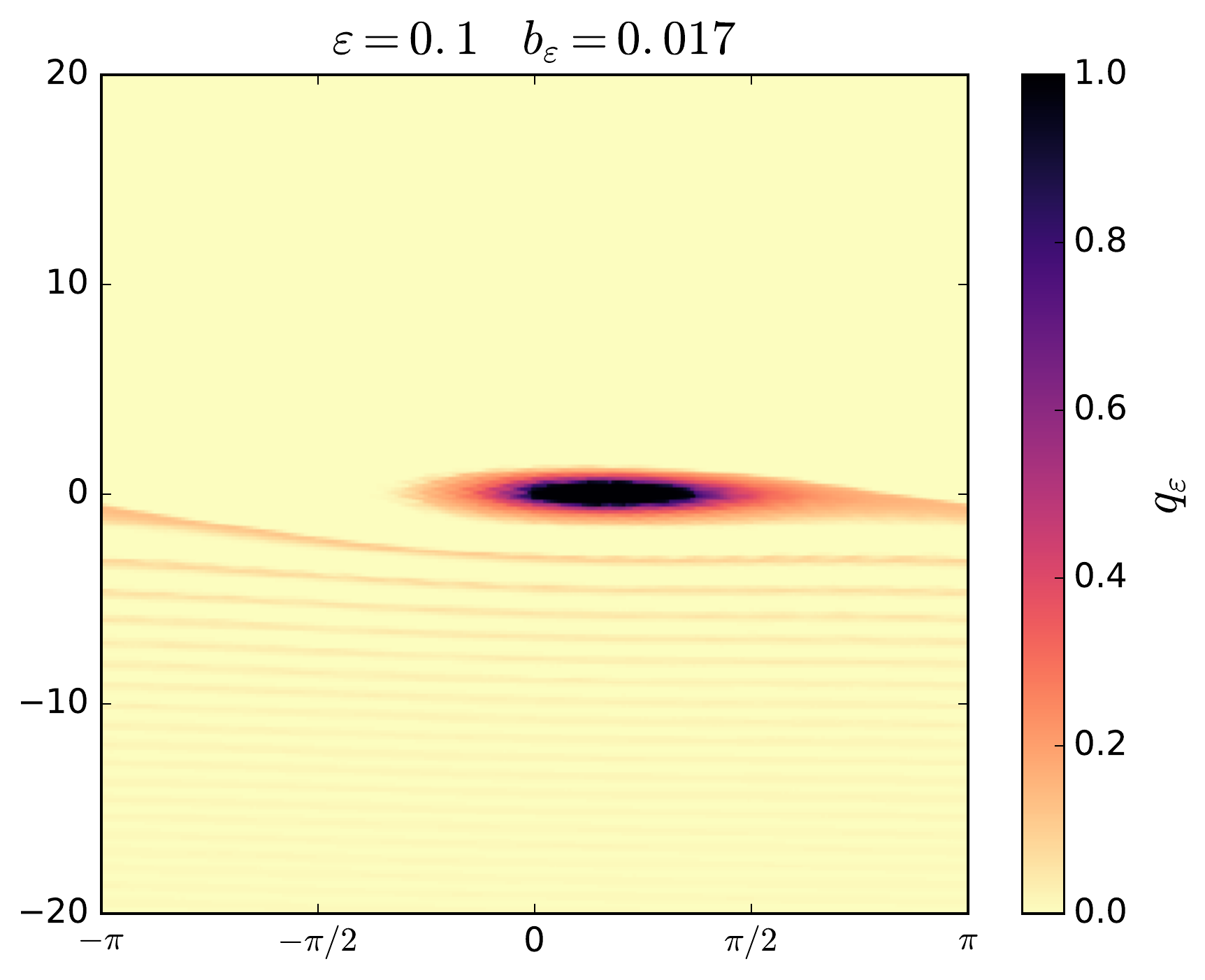}\hfill
\includegraphics[width=0.49\columnwidth]{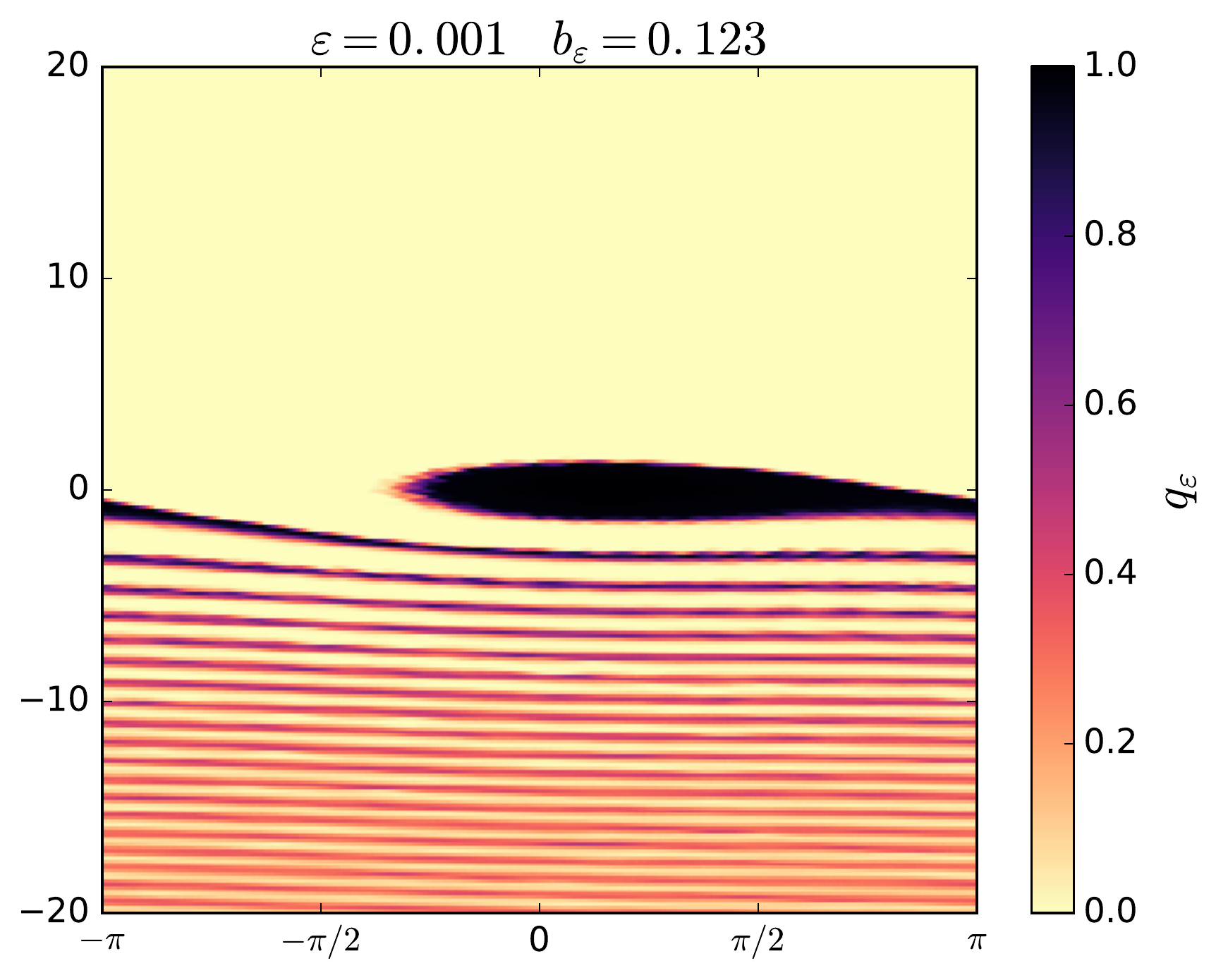}
\caption{$\varepsilon$-committors of the metastable set around the fixed point. The absorption rates are $\varepsilon=0.1$ and $\varepsilon=0.01$ with corresponding $\varepsilon$-absorption stability values of $b_\varepsilon=0.017$ and $b_\varepsilon=0.123$. For $\varepsilon=0.01$ the whole basin of the fixed point is detected and $b_\varepsilon$ is close to the basin stability value of the original system. If $\varepsilon$ is decreased further the basin stays qualitatively unchanged, while the basin stability value is even better approximated.}\label{fig:ddpcom}
\end{figure}

In order to compute $\varepsilon$-absorption stability, first we have to transform the ordinary differential equation (ODE) into a discrete dynamical system. For a fixed timestep $\tau$ the flowmap $\varphi(x_0) = \varphi(\tau, x_0) = x(\tau)$ gives the value $x(\tau)$ at time $\tau$ of a solution $x(t)$ of the ODE with initial condition $x_0$. Then $\varphi: X \to X$ defines a discrete dynamical system and we can construct the Perron-Frobenius operator and its Ulam approximation according to Section~\ref{sec:ulam}ff.

\sloppy
For Ulam's method we use $256\times256$ regular square boxes on the state space ${[-20,20]\times[-\pi,\pi]}$, such that the resulting transition matrix has dimension ${65536 \times 65536}$. In every box 1000 initial conditions are initiated uniformly on random and numerically integrated for the time-step $\tau=1$ in order to obtain the transition probabilities between boxes.

The discretization by Ulam's method introduces discretization diffusion in the system and thereby destroys the stability of the attractors, in particular of the stable fixed point, since trajectories in its basin spiral only slowly towards it and therefore it is possible that they enter a box centered outside the fixed points' basin. However, a metastable set remains in the vicinity of the fixed point. By analyzing this set we can determine the basin of attraction of the original fixed point. If we choose less boxes for our discretization method, the resulting discretization noise increases and metastability of the set around the fixed point decreases until its relation to the deterministic behaviour is lost. 

Figure~\ref{fig:ddpcom} shows the committor functions of the metastable set around the fixed point. As expected the basin of $\varepsilon$-absorption converges to the basin of attraction shown in Figure~\ref{fig:ddp} when the expected time horizon is increased. The $\varepsilon$-absorption stability value of $b_\varepsilon \approx 0.1262$ for $\varepsilon = 10^{-8}$ is in very good accordance with the classical basin stability value obtained by Monte Carlo integration as $0.1267 \pm 0.0002$. Note that the required number of function evaluations in order to compute basin stability up to this precision by the Monte Carlo approach is considerably higher than the number of function evalutions required to construct the transition matrix. When $\varepsilon$ is further decreased the values of the $\varepsilon$-committor are expected to slowly decrease due to discretization diffusion. At an resolution of $256 \times 256$ boxes this effect is not observed since it is below numerical precision, however at a resolution of $128 \times 128$ boxes it is clearly visible and for resolutions below $64 \times 64$ boxes discretization diffusion gets too strong to draw any reliable conclusions on the systems dynamics.

Obviously the number of boxes is the main factor for determining the computational cost of Ulam's method and hence it is desirable to use as few boxes as possible. For some systems adaptive partitions may greatly reduce computational effort by using fewer partition elements\cite{froyland2001extracting, dellnitz1998adaptive}.

\subsection{A chain of oscillators}

In order to illustrate the sampling approach for high dimensional systems we study a chain of 16 coupled damped driven pendula subject to additive noise, and perturb them around the synchronous state. Perturbations are from the range $\pm 5$Hz and $\pm \pi$. We study the system with $P = 1$ and $K = 8$ for various levels of additive noise acting on the frequencies. The results for various choices of time horizon/absorption probability $T = \frac{2}{\varepsilon}$ and noise strength $\sigma$ are shown in Figure~\ref{fig:gen-bs}. The region whose basin of attraction is studied is that of all frequencies smaller than $0.5$Hz. This is qualitatively the type of constraint on the behaviour of a system that one is concerned about in the context of power grid modelling.

Note that, as can be seen from the single damped driven pendulum, the region around the attractor is only metastable if noise is added to the system. Therefore this is an example of generalized basin stability for a metastable state.

Looking at low noise, the probability to end in the region studied first increases with $T$. This shows the time scale on which the perturbations studied return to the metastable region. As the fixed point is the only attractor in the region, the no-noise stochastic basin stability converges to the basin stability of the attractor as $T$ increases. With some noise added the stochastic basin stability remains close to the deterministic one, until we see the noise reach a strength where the metastability of the region studied collapses. This illustrates that our stochastic basin stabilities are a natural generalization of basin stability.

\begin{figure}
\includegraphics[width=\columnwidth]{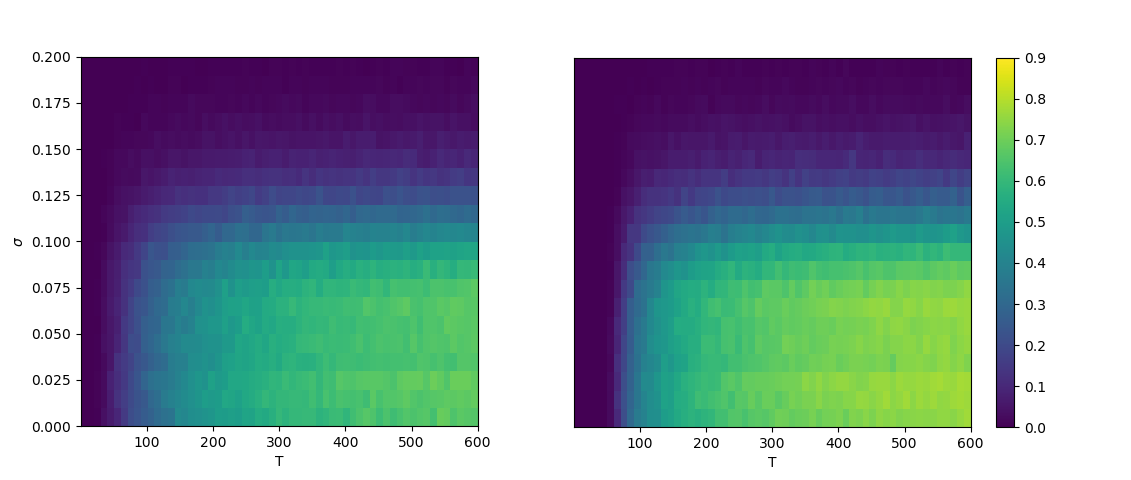}
\caption{Stochastic basin stability based on the mean-soujourn time (left) and the $\varepsilon$-committor (right) as a function of time horizon $T = \frac{2}{\varepsilon}$ and noise strength $\sigma$. Region $A$ is the part of phase space satisying $|\omega_i| < 0.5Hz$, close to the synchronous state. Bottom right corner converges to deterministic basin stability.}
\label{fig:gen-bs}
\end{figure}

\subsection{Anderies' model of global carbon dynamics \label{sec:anderies}}

\citet{anderies2013topology} introduce a non-linear conceptual model of global carbon dynamics that exhibits long transient trajectories when started in a particular region of phase space. The model equations for marine $c_m$, terrestrial $c_t$ and atmospheric $c_a$ carbon are

\begin{alignat}{2}
 & \dot{c}_m &&= \alpha_m (c_a - \beta c_m) \nonumber \\
 & \dot{c}_t &&= \mathrm{NEP}(c_a,c_t) - \alpha c_t \nonumber \\
 & c_a       &&= 1 -c_m -c_t, \nonumber
\end{alignat}
where $\alpha_m=0.05$, $\alpha=0.1$ and $\beta=1$ and $\mathrm{NEP}$ denotes a complex, non-linear relation between $c_a$ and $c_t$, which is explained in detail in~\citet{anderies2013topology}. Due to the third equation the total amount of carbon stays constant and we can consider the system on the restricted phase space $X=\{ (c_m,c_t) \in [0,1]^2 \mid c_m + c_t \leq 1 \}$. For the chosen parameters the system has a single, globally attractive fixed point and hence basin stability equals 1 by definition. Trajectories starting with low marine and terrestrial carbon stocks, i.e. $c_m + c_t \leq 0.4$ pass through a set where $c_t \approx 0$ before converging to the stable state. 

The so-called dead zone is defined as $D:=\{(c_m,c_t) \in X \mid c_t < 0.1\}$ and corresponds to a state of low terrestrial carbon stocks, i.e. when pratically all land-based vegetation and thus the basis for human life has vanished. It contains a long transient region where some trajectories spend a large amount of time before they converge to the attractor. Since the probability that the process is in $D$ decreases monotonically for large time-horizons, we can obtain lower bounds for the expected time the process spends in $D$ during the first $\varepsilon^{-1}$ steps by applying Example~\ref{ex:longtransientsiii} and computing the normalized $\varepsilon$-committor $\frac{1}{\varepsilon}q_\varepsilon$. Assuming a society is able to survive a state of low-terrestrial carbon given that vegetation recovers fast enough, the $\varepsilon$-committors may be used to assess which trajectories are ``survivable'', thus complementing the notion of ``survivability'' for dynamical systems recently introduced by~\citet{hellmann2016survivability}.

In order to compute the $\varepsilon$-committors we discretize the square $[0,1]^2$ into $128\times128$ uniform square boxes, discard all boxes that have empty intersection with $X$ and compute the transition matrix according to Section~\ref{sec:ulam}ff. The resulting partition has 8256 elements, where the boxes on the diagonal are triangles with half the weight of a square box.

Figure~\ref{fig:anderies} shows the classical committor function of the dead zone that we introduced in Section~\ref{sec:classcomm} along $\frac{1}{\varepsilon} q_\varepsilon$ for different values of $\varepsilon$. We chose to show $\frac{1}{\varepsilon} q_\varepsilon$ over $q_\varepsilon$ since for transient sets the latter simply tends to zero, while the former stabilizes at the expected time the process with absorption spends in the set $D$, cf. Section~\ref{sec:furthercommis}. Given that $\varepsilon^{-1}$ is large enough, this provides a lower bound for the expected time the original process (without absorption) spends in $D$, and even more, by the reasoning that we applied in Section~\ref{sec:differenceestimates}, it converges to the same value for~$\varepsilon$~to~0.

\begin{figure}
\includegraphics[width=0.49\columnwidth]{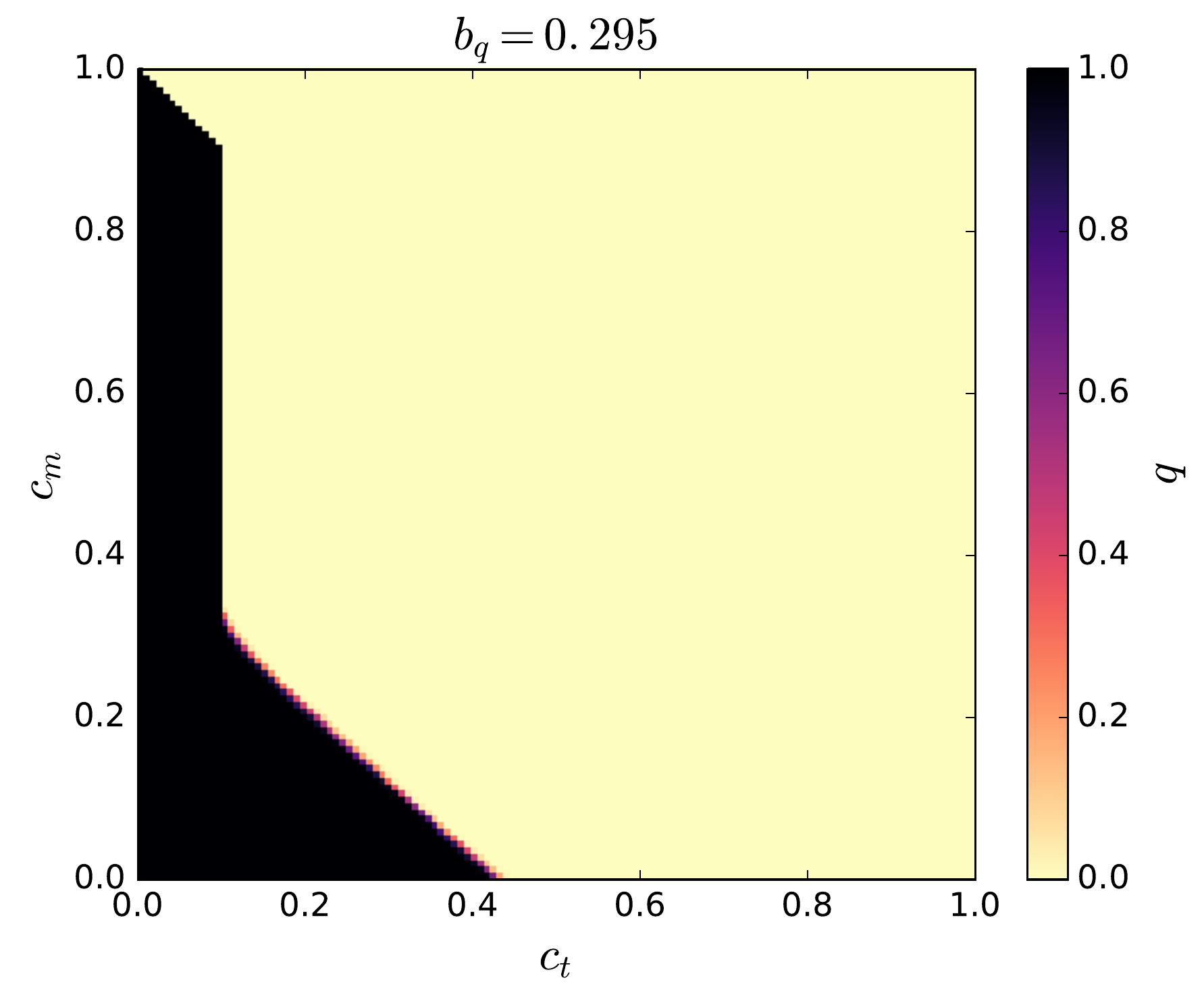}
\includegraphics[width=0.49\columnwidth]{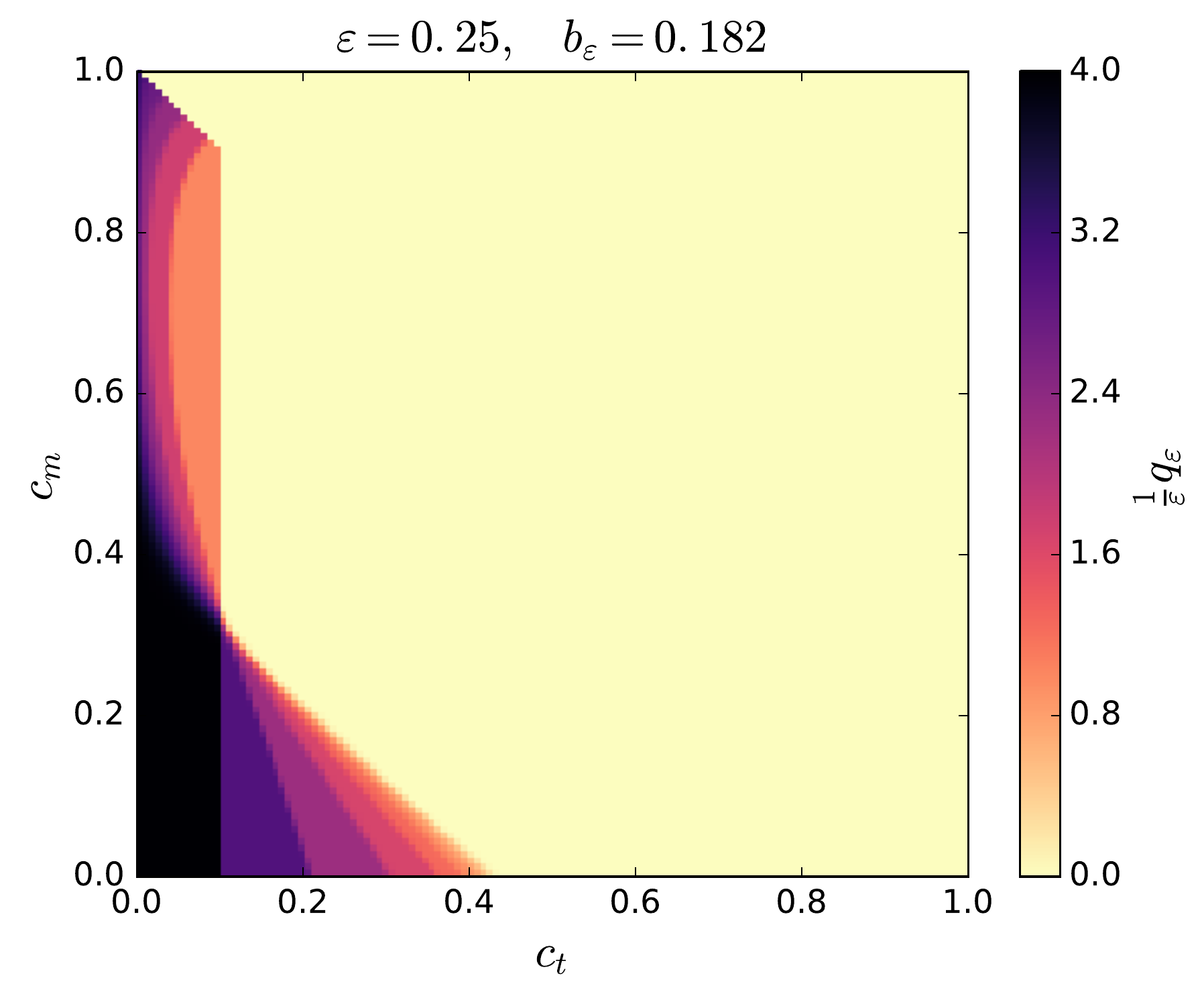} \\
\includegraphics[width=0.49\columnwidth]{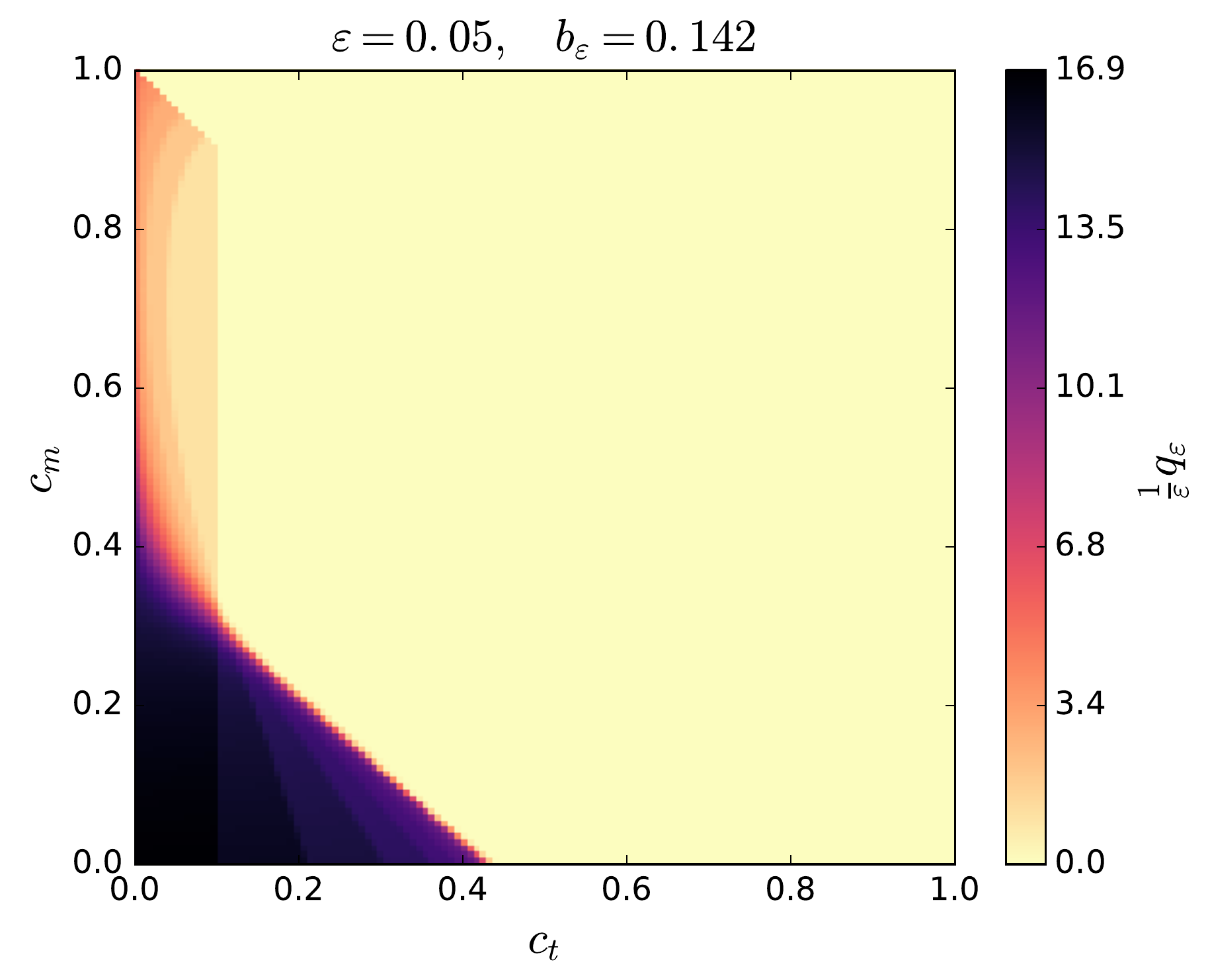}
\includegraphics[width=0.49\columnwidth]{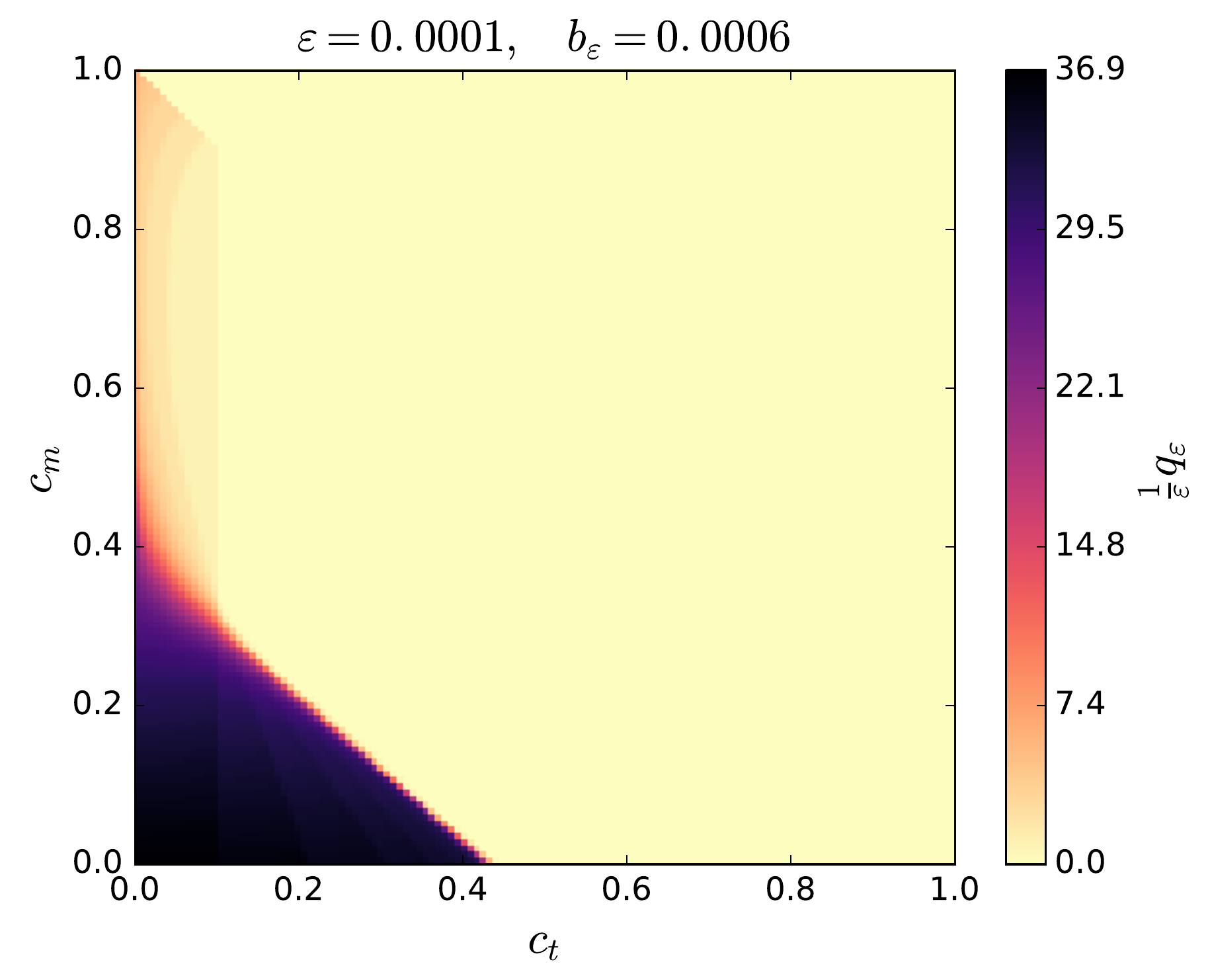}
\caption{The upper left plot shows the classical committor function $q$ with respect to the dead zone $D$ and the asymptotic fixed point, cf.~Eq.~\eqref{eq:committorAB}. The fraction of initial states that eventually hit the dead zone is $0.295$. The other plots show $\varepsilon$-absorption stability for $\varepsilon=0.25, 0.05, 0.0001$ and the expected time the process spends in $d$ before absorption, as described by the normalized $\varepsilon$-committors $\frac{1}{\varepsilon} q_\varepsilon$.   While $b_\varepsilon$ tends to zero for $\varepsilon$ to zero the maximum value of $\frac{1}{\varepsilon} q_\varepsilon $ converges to $36.9$.}
\label{fig:anderies}
\end{figure}

\section{Conclusions}

We introduced transfer operator methods for dynamical system and established connections from basins of attraction to related concepts in dynamical systems, functional analysis and Markov chain theory. On this basis we developed the novel concept of $\varepsilon$-committors and studied their general properties as well as asymptotic behaviour. We saw that the $\varepsilon$-committors generalize basins of attraction for systems with long transients or metastable states. They can be applied to stochastic and deterministic systems likewise. Their connection to mean sojourn times was investigated in detail for metastable states. $\varepsilon$-committors proved especially useful in applications with an undesirable region in phase space, since they allow to compute the time the process is expected to spend in this region. We highlight again that only short trajectories are needed for computing $\varepsilon$-committors and that they give access to transient properties of the system at every timescale with equal computational effort. 

Importantly we showed that the basin stability for these stochastic basins of attraction can be estimated at comparable cost to deterministic systems.

Compared to the work of \citet{serdukova2016stochastic} our definition is entirely intrinsic and does not presupose knowing the basin of attraction. This allows for a straightforward estimator for the generalized basin stability, whereas it is not known whether such an estimator exists for the definition of \citet{serdukova2016stochastic} (see \citet{schultz2017bounding} though for an estimator for a related quantity). We define stochastic basins more generally for measureable sets in arbitrary stochastic systems given that their evolution is described by a Markov operator. The trade off is that our stochastic basin requires a choice of region and will in general depend on this choice. We leave working out the precise relationship between these two notions of stochastic basin to future work.

While the probabilistic formulation of the $\varepsilon$-committors generalize immediately to systems on continuous state spaces and for continuous-time dynamical systems, it would be interesting to also develop the appropriate PDE formulations for them, as well as for the fuzzy committors. Another interesting question is if $\varepsilon$-commitors can be used to define metastable sets via a minimization problem. 

We have shown that the transfer operator approach provides a conceptual framework for stochastic basin stability, but we also hope that it might eventually be a theoretical foundation for the development of more efficient algorithms to evaluate stochastic and deterministic basin stability for systems with long transients. To estimate basin stability requires the integration of trajectories until they have converged to the attractor (up to numerical precision). For systems with long transient states trajectories might be very expensive to compute, while transfer operators capture all timescales without requiring long trajectories. At the present moment this approach works efficiently in low-dimensional state spaces, with the trade-off being that numerical diffusion blurs the basin \cite{koltai2011stochastic, froyland2013estimating}.  It cannot be applied to high-dimensional systems since the computational cost of Ulam's method increases exponentially with the dimension of state space.

We hope that eventually transfer operator methods will facilitate the development of efficient algorithms for estimating basin stability and related measures like $\varepsilon$-absorption stability in high-dimensional systems, possibly in conjunction with techniques from randomized linear algebra~\cite{mahoney2016lecture}. 

\section*{Software}
The simulations were performed using Julia and Python, using the SciPy package \cite{scipy}. The high dimensional example was implemented using the DifferentialEquations.jl library\cite{rackauckas2017differentialequations, rackauckas2017adaptive} using the algorithms of \citet{rossler2010runge}.

\section*{Acknowledgments}
The authors would like to especially thank P\'eter Koltai for many extensive helpful discussions on the use of committors in the context of basins, in which the generalized committors were defined.

We would also like to thank Jobst Heitzig and Paul Schultz for extensive comments on the final draft of this manuscript and Chris Rackauckas with help in implementing the example C. Parts of this work were funded by BMBF (CoNDyNet Grant No. 03SF0472A, CoSy-CC2 Grant No. 01LN1306A), Volkswagen Foundation (Grant No.  88462) and the Deutsche Forschungsgemeinschaft (Grant No. KU 837/39-1 / RA 516/13-1).
\bibliographystyle{unsrtnat}
\bibliography{bibdat.bib}

\appendix

\section{Convergence results \label{sec:convergenceresults}}

In this section we will prove convergence results for the geometric, respectively ergodic averages of operators related to $\varepsilon$-committor and EMS time.  We develop the theory in a general functional analytic setting since then the structure of the proofs is clearer. At the same time the results are more profound and might serve as a stepping stone for extending our concepts to transfer operators acting on infinite-dimensional spaces. Sections~\ref{sec:ergocont}~to~\ref{sec:ergodeco} establish ergodic theorems for special classes of Hilbert space operators, Section~\ref{sec:ergomat} focuses on the important application case of stochastic matrices. Most importantly we will see that under some assumptions the geometric, respectively ergodic averages of an operator $O$ converge to a projection onto the fixed space of $O$. Recall that if $O$ is a Koopman operator or an approximation thereof knowing its fixed space is equivalent to knowing, respectively approximating, the basin structure of the underlying dynamical system (see also Section~\ref{sec:ctsop} and Example~\ref{ex:emstulamfix}). These results imply that the quantities that we propose as notions of ``stochastic basins of attraction'', namely $\varepsilon$-committors and EMS times, converge back to the classical basins of attraction in the limiting cases.

\subsection{Ergodic theorems for contractions on a Hilbert space \label{sec:ergocont}}

This paragraph follows the approach taken by~\citet{krengel1985ergodic}. 
Let H be a Hilbert space, and denote the scalar product of $u,v \in H$ as $\langle u,v \rangle$. $\mathcal{B}[H]$ is the set of bounded, linear operators $O:H\to H$. Denote by $O^*$ the dual of $O \in \mathcal{B}[H]$, such that $\langle Ou,v \rangle = \langle u, O^*v\rangle \quad \forall u,v$. 

The norm of an operator $O \in \mathcal{B}[H]$ is given by
\begin{equation}
 \lVert O \rVert = \sup_{\lVert v \rVert \leq 1} \lVert Ov \rVert.
\end{equation}

\begin{mylem}\label{lem:adjnorm}
 If $O$ is a bounded, linear opertor on $H$ and $O^*$ its dual, then
 \begin{equation}
  \lVert O \rVert = \lVert O^* \rVert
 \end{equation}
\end{mylem}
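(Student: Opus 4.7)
The plan is to use the standard Cauchy--Schwarz argument plus the fact that the dual of the dual is the original operator, so the equality of norms follows by a symmetric double inequality.

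First I would fix $v \in H$ with $\lVert v \rVert \le 1$ and estimate $\lVert O v \rVert^2 = \langle Ov, Ov\rangle = \langle v, O^*Ov \rangle$ using the defining property of the adjoint. Applying Cauchy--Schwarz and then the operator norm of $O^*$ gives
\begin{equation}
\lVert Ov \rVert^2 \le \lVert v \rVert \cdot \lVert O^* O v \rVert \le \lVert O^* \rVert \cdot \lVert O v \rVert.
\end{equation}
Dividing by $\lVert Ov \rVert$ when it is nonzero (and noting the inequality is trivial otherwise) yields $\lVert Ov \rVert \le \lVert O^* \rVert$ for every unit $v$, hence $\lVert O \rVert \le \lVert O^* \rVert$ after taking the supremum.

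For the reverse inequality I would invoke that $(O^*)^* = O$ on a Hilbert space (this is immediate from the defining identity $\langle Ou, v\rangle = \langle u, O^*v\rangle$ applied with $u,v$ swapped and complex-conjugate symmetry of the inner product). Running the same Cauchy--Schwarz argument with $O$ replaced by $O^*$ then gives $\lVert O^* \rVert \le \lVert (O^*)^* \rVert = \lVert O \rVert$, which combined with the previous bound produces the claimed equality.

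I expect no real obstacle here: the only subtlety is justifying $(O^*)^* = O$, which is a one-line consequence of the uniqueness of the adjoint and the conjugate-symmetry of the Hilbert space inner product; everything else is Cauchy--Schwarz and the definition of the operator norm. An alternative, equally short route would be to write $\lVert O \rVert = \sup_{\lVert u \rVert, \lVert v \rVert \le 1} \lvert \langle Ou, v \rangle \rvert = \sup_{\lVert u \rVert, \lVert v \rVert \le 1} \lvert \langle u, O^*v \rangle \rvert = \lVert O^* \rVert$, avoiding the double application entirely; I would likely include this as a remark but keep the Cauchy--Schwarz derivation as the main proof since it makes the role of boundedness explicit.
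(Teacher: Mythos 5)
Your proof is correct. The paper states this lemma without giving any proof at all (it is treated as a standard fact, immediately followed by the matrix/transpose example), and your Cauchy--Schwarz argument combined with $(O^*)^* = O$ is exactly the standard textbook derivation one would supply; the alternative via $\lVert O \rVert = \sup_{\lVert u \rVert, \lVert v \rVert \le 1} \lvert \langle Ou, v \rangle \rvert$ is equally valid.
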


\begin{myex}
 If $M$ is a real matrix then its dual operator is the transposed matrix $M^T$.
\end{myex}

$O$ is called \emph{contraction}, if $\lVert O \rVert \leq 1$. A bounded, linear operator $U$ is called unitary if $U$ is surjective and preserves the scalar product, i.e.
\begin{equation}
 \langle Uu, Uv\rangle = \langle u,v \rangle \quad \forall u,v.
 \end{equation}
An unitary operator is a contraction and its specturm lies on the unit circle, see~\citet{krengel1985ergodic}.

\begin{myex} 
Any Markov operator and in particular the Perron-Frobnenius operator $\pfo$ is a contraction on $L^1(X,\mu)$, this follows directly from the definition of a Markov operator, compare Section~\ref{sec:markovop}. If $\mu$ is an invariant measure than $\pfo$ is a contraction on the Hilbert space $L^2(X,\mu)$, see \citet{lasota2013chaos}.  In this case the Koopman operator $\koo$ is a contraction on $L^2(X,\mu)$ as well\cite{eisner2015operator}.
\end{myex}

The following lemmata will allow a slick proof of the classical mean ergodic theorem due to von Neumann and of a related theorem that implies the convergence of the $\varepsilon$-committors.
\begin{mylem}
 Let $O \in \mathcal{B}[H]$ be a contraction on a real or complex Hilbert space and $v \in H$. Then 
 \begin{equation}
  v=Ov \quad \Leftrightarrow \quad v = O^*v
  \end{equation}
\end{mylem}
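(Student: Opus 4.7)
The plan is to prove one direction, namely $v = Ov \implies v = O^*v$, directly by expanding $\|v - O^*v\|^2$ in the inner product and showing it vanishes; the reverse implication then follows by symmetry using $(O^*)^* = O$ together with the fact that $O^*$ is also a contraction (Lemma~\ref{lem:adjnorm}).

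First I would assume $Ov = v$ and compute
\begin{equation}
\|v - O^*v\|^2 = \|v\|^2 - \langle v, O^*v\rangle - \langle O^*v, v\rangle + \|O^*v\|^2.
\end{equation}
The two cross terms can each be rewritten by moving the adjoint over: $\langle v, O^*v\rangle = \langle Ov, v\rangle = \langle v, v\rangle = \|v\|^2$, and similarly $\langle O^*v, v\rangle = \langle v, Ov\rangle = \|v\|^2$. For the last term I would invoke $\|O^*v\|^2 \le \|O^*\|^2 \|v\|^2 \le \|v\|^2$, where the first inequality is the operator norm bound and the second uses Lemma~\ref{lem:adjnorm} together with the contraction hypothesis $\|O\| \le 1$. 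Summing these estimates gives
\begin{equation}
\|v - O^*v\|^2 \le \|v\|^2 - \|v\|^2 - \|v\|^2 + \|v\|^2 = 0,
\end{equation}
so $v = O^*v$.

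For the converse, I would simply apply what I just proved to the operator $O^*$, which is also a contraction by Lemma~\ref{lem:adjnorm}, and use $(O^*)^* = O$: starting from $O^*v = v$ the same argument yields $v = Ov$. The main (minor) subtlety will be ensuring the inner-product manipulations are handled correctly in the complex case — the cross terms are complex conjugates of each other, but since each individually equals $\|v\|^2 \in \mathbb{R}$, no issue arises. The bound $\|O^*v\| \le \|v\|$ is the only place the contraction hypothesis is used, so the argument is as tight as possible.
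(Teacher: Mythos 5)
Your proof is correct and follows essentially the same route as the paper's: both hinge on expanding a squared norm of a difference, using the adjoint identity to turn the cross terms into $\|v\|^2$, and invoking the contraction bound together with $\|O^*\|=\|O\|$ to force the expression to vanish. The only cosmetic difference is that you expand $\|v-O^*v\|^2$ directly, whereas the paper first establishes the symmetric characterization $v=Ov \Leftrightarrow \|v\|^2=\langle v,Ov\rangle$ and then transfers it to $O^*$.
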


\begin{proof}
If for some $v \in H : \lVert v \rVert^2 = \langle v, Ov \rangle$, then $\langle v, Ov \rangle$ is real and $\langle v, Ov \rangle = \langle Ov, v \rangle$ by symmetry of the scalar product. Then we get
\begin{align}
 \lVert Ov - v \rVert^2 & =  \langle Ov-v, Ov-v \rangle = \lVert Ov \rVert^2 + \lVert v \rVert^2 - 2 \langle v,Ov\rangle \nonumber \\
 & \leq  2 \lVert v \rVert^2  - 2 \lVert v \rVert^2 = 0,
\end{align}
where we used that $O$ is a contraction in the last line. Thus $v=Ov$ is equivalent to $ \lVert v \rVert^2 = \langle v,Ov \rangle = \langle O^*v,v \rangle$. Since $O^*$ is a contraction as well by Lemma~\ref{lem:adjnorm} applying the equivalence to $O^*$ yields the identity $v = O^*v$.
 \end{proof}
 
We will often use the subspace ${\fix(O)}\subset H$ of $O$-invariant vectors
\begin{equation}
{\fix(O)}:= \{ v \in H \mid Ov = v \}, 
\end{equation}
Obviously, ${\fix(O)}$ consists of the eigenvectors with eigenvalue 1 and is closed. 

A vector $u$ is called \emph{orthogonal} to a subspace $V \subseteq H$, if  $\langle u, v \rangle = 0 \quad \forall v \in V$. In this case we write $u \perp V$. The \emph{orthogonal complement} $V^\perp$ of a subspace $V$ is the set of all vectors $u$ that are orthogonal to $V$.

\begin{mylem}
 Let $O \in \mathcal{B}[H]$ be a contraction on a Hilbert space H. 
 Then the orthogonal complement ${\fix(O)}^\perp$ of ${\fix(O)}$ is the closure of the subspace $N$ spanned by $\{v-Ov \mid v \in H\}$.
 \label{lem:Tinvariant}
\end{mylem}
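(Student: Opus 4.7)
The plan is to show the two inclusions $\overline{N}\subseteq \fix(O)^\perp$ and $\fix(O)^\perp \subseteq \overline{N}$ separately, using the previous lemma (that $Ov=v \iff O^*v=v$ for a contraction) as the main tool. Since $\fix(O)^\perp$ is automatically closed, containing $N$ in it will yield the first inclusion for free after taking closures.

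First I would establish $N \subseteq \fix(O)^\perp$. Pick any generator $v - Ov$ of $N$ and any $w \in \fix(O)$. By the preceding lemma, $w \in \fix(O)$ implies $O^*w = w$, so
\begin{equation}
\langle w, v - Ov\rangle = \langle w, v\rangle - \langle w, Ov\rangle = \langle w, v\rangle - \langle O^*w, v\rangle = 0.
\end{equation}
Linearity extends this to all of $N$, and closedness of $\fix(O)^\perp$ promotes the inclusion to $\overline{N} \subseteq \fix(O)^\perp$.

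Next I would show the reverse inclusion via the elementary Hilbert-space fact that for a closed subspace $V\subseteq H$, $(V^\perp)^\perp = V$. So it suffices to prove $\overline{N}^\perp \subseteq \fix(O)$. Take $u \perp N$; then for every $v \in H$,
\begin{equation}
0 = \langle u, v - Ov\rangle = \langle u, v\rangle - \langle O^*u, v\rangle = \langle u - O^*u, v\rangle.
\end{equation}
Choosing $v = u - O^*u$ gives $u = O^*u$, and applying the previous lemma a second time yields $u = Ou$, i.e.\ $u \in \fix(O)$. Taking orthogonal complements of both sides of $\overline{N}^\perp \subseteq \fix(O)$ and using that $\overline{N}$ is closed gives $\fix(O)^\perp \subseteq \overline{N}$.

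I do not foresee any real obstacle here: the argument is a standard dual-operator computation and essentially amounts to recognizing $v - Ov$ as the image of $v$ under $I - O$, so that $N = \im(I - O)$ and the claim is the familiar identity $\ker(I - O^*) = \overline{\im(I - O)}^\perp$. The only subtlety is that one must invoke the contraction hypothesis precisely where the previous lemma is used, in order to pass between $\fix(O)$ and $\fix(O^*)$ without appealing to a selfadjoint structure that $O$ need not possess.
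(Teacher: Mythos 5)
Your proof is correct and follows essentially the same route as the paper's: the same computation $\langle u, v - Ov\rangle = \langle u - O^*u, v\rangle$, the same appeal to the preceding lemma to pass between $\fix(O)$ and $\fix(O^*)$, and the same double-orthogonal-complement step at the end. The paper merely packages the two inclusions as a single chain of equivalences establishing $N^\perp = \fix(O)$, which is a cosmetic difference.
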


\begin{proof}
\begin{align}
  u \perp N & \Leftrightarrow  \langle u, (O-I)v \rangle = 0 \; \forall  v\in H \nonumber \\
  & \Leftrightarrow \langle O^*u - u, v \rangle = 0 \; \forall v \in H  \nonumber \\
  & \Leftrightarrow  O^*u = u \Leftrightarrow Ou = u \Leftrightarrow u \in {\fix(O)}.
\end{align}
Thus $N$ is orthogonal to ${\fix(O)}$. Since $\fix(O)^\perp$ is closed and contains $N$ it contains $\overbar{N}$ as well. Since a closed, linear subspace of a Hilbert space and its closure have the same orthogonal complement, we have that $\overbar{N}^\perp = {\fix(O)}$ and ${\fix(O)}^\perp = \left(\overbar{N}^\perp\right)^\perp = \overbar{N}$.
\end{proof}

A \emph{projection} is a linear map $P:H \to H$, such that $P^2 = P$. 
It induces a decomposition of $H = \ker P \oplus \im P$ into a direct sum of its kernel and its image. If its kernel and image are orthogonal onto each other, then $P$ is called \emph{orthogonal projection}. The projection operator onto $\ker P$ is  $Q:=I-P$ and it is easy to see that $QP =PQ = 0$. Conversely, if $H $ can be written as a direct sum of closed subspaces $U$ and $V$, then every element $h \in H = U \oplus V$ can be written as $h=u+v$ with $u \in U$ and $v \in V$. The map $P_U$ defined by $P_U h = u$, satisfies $P_U^2 = P_U$ and is called the projection onto U along V.

We are now well prepared to study the convergence of averages of powers of the operator. If O is a contraction we define $S_Nv := S_N[O]v = \frac{1}{N} \sum_{k=0}^{N-1} O^kv$, the so called  \emph{Ces\`{a}ro averages} or \emph{ergodic means}. Note the close connection to the expected mean sojourn times, that were introduced before.

\begin{mythm} (von Neumann mean ergodic theorem)

Let $O \in \mathcal{B}[H]$ be a contraction on a Hilbert space H. Then for every $v \in H$

\begin{equation}
 \lim_{N\to\infty} S_N[O]v = P_{\fix(O)}v,
\end{equation}
where $P_{\fix(O)}: H \to {\fix(O)}$ is the orthogonal projection onto the subspace ${\fix(O)}$.
\end{mythm}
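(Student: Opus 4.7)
The plan is to use the orthogonal decomposition $H = \fix(O) \oplus \fix(O)^\perp$ afforded by Lemma~\ref{lem:Tinvariant}, and to show that $S_N[O]$ converges to the identity on $\fix(O)$ and to zero on $\fix(O)^\perp$. Since both operators agree on a decomposition of $H$, this forces convergence to the orthogonal projection $P_{\fix(O)}$.

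First I would observe the uniform bound $\lVert S_N[O]\rVert \leq \frac{1}{N}\sum_{k=0}^{N-1}\lVert O^k\rVert \leq 1$ for all $N$, since $O$ is a contraction. This gives equicontinuity of the family $\{S_N[O]\}_N$, which is what will let me pass from a dense subspace to its closure. On the ``easy'' piece, for $v \in \fix(O)$ we simply have $O^k v = v$ for all $k$, hence $S_N[O]v = v$, and of course $P_{\fix(O)}v = v$ for such $v$.

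Next, for the complementary piece, by Lemma~\ref{lem:Tinvariant} we know $\fix(O)^\perp = \overbar{N}$ where $N = \{w - Ow : w \in H\}$. For a vector of the form $v = w - Ow \in N$, a telescoping computation gives
\begin{equation}
S_N[O]v = \frac{1}{N}\sum_{k=0}^{N-1}(O^k w - O^{k+1}w) = \frac{1}{N}(w - O^N w),
\end{equation}
so that $\lVert S_N[O]v\rVert \leq \frac{2}{N}\lVert w\rVert \to 0$. Thus $S_N[O]$ converges to $0$ pointwise on the dense subspace $N \subseteq \fix(O)^\perp$. Using the uniform bound $\lVert S_N[O]\rVert \leq 1$ together with a standard $\varepsilon/3$ approximation argument, this pointwise convergence extends to all of $\overbar{N} = \fix(O)^\perp$, where $P_{\fix(O)}$ also vanishes.

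Finally I would combine the two pieces: any $v \in H$ decomposes uniquely as $v = v_1 + v_2$ with $v_1 \in \fix(O)$ and $v_2 \in \fix(O)^\perp$, so by linearity $S_N[O]v \to v_1 = P_{\fix(O)}v$. The main potential obstacle is the extension from $N$ to its closure $\overbar{N}$; this is where contractivity is essential, since without a uniform operator-norm bound on $S_N[O]$ one cannot pass pointwise convergence through the closure. Everything else reduces to the telescoping identity and the orthogonal decomposition already established in Lemma~\ref{lem:Tinvariant}.
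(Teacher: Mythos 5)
Your proposal is correct and follows essentially the same route as the paper: the paper proves the geometric-mean version in detail (fixed vectors are preserved, vectors of the form $(O-I)w$ are killed in the limit, a density argument using the uniform bound $\lVert S_N[O]\rVert\leq 1$ extends this to $\overbar{N}=\fix(O)^\perp$ via Lemma~\ref{lem:Tinvariant}, and the orthogonal splitting $H=\fix(O)\oplus\fix(O)^\perp$ finishes the argument) and states that the Ces\`{a}ro case is proved analogously, which is exactly your telescoping computation. No gaps.
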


\begin{proof} The argument is similar to the next proof, see also~\citet{krengel1985ergodic}, Thm. 1.4. 
\end{proof}
For a contraction O define the geometric averages $C_\varepsilon v := C_\varepsilon (O) v := \varepsilon \sum_{k=0}^\infty (1-\varepsilon)^k O^k v$. 
Since $\lVert O \rVert \leq 1$, it is a direct consequence of the summability of the geometric series, that for any $\varepsilon \in (0,1]$ the operator norm of $C_\varepsilon$ is bounded by 1 and that $C_\varepsilon$ is linear on $H$. 
Furthermore we have the identity 
\begin{equation}\label{eq:commind}
C_\varepsilon v = (1-\varepsilon)C_\varepsilon ( O v ) + \varepsilon v.
\end{equation}

\begin{mythm} (geometric mean ergodic theorem)

Let $O \in \mathcal{B}[H]$ be a contraction on a Hilbert space H. Then for every $v \in H$
 \begin{equation}
  \lim_{\varepsilon\to 0} C_\varepsilon[O]v = P_{\fix(O)} v,
 \end{equation}
 where $P_{\fix(O)}: H \to {\fix(O)}$ is the orthogonal projection onto the subspace ${\fix(O)}$.
 \label{thm:geomeanergo}
\end{mythm}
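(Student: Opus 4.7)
The plan is to mirror the structure of the von Neumann ergodic theorem proof, exploiting the orthogonal decomposition $H = \fix(O) \oplus \overline{N}$, where $N = \{u - Ou \mid u \in H\}$, as established in Lemma~\ref{lem:Tinvariant}. Since $\|C_\varepsilon\| \leq 1$ uniformly in $\varepsilon$ (a direct consequence of $\|O\| \leq 1$ and the geometric series identity $\varepsilon \sum_{k=0}^\infty (1-\varepsilon)^k = 1$), it suffices to prove pointwise convergence on each summand and extend by continuity.

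First I would dispatch the easy case $v \in \fix(O)$: there $O^k v = v$ for all $k$, so $C_\varepsilon v = \varepsilon \sum_{k=0}^\infty (1-\varepsilon)^k v = v = P_{\fix(O)}v$ trivially for every $\varepsilon$. Next, for a ``coboundary'' $v = u - Ou \in N$, I would carry out the telescoping computation
\begin{equation}
C_\varepsilon(u - Ou) = \varepsilon u - \varepsilon^2 \sum_{k=1}^\infty (1-\varepsilon)^{k-1} O^k u = \varepsilon\bigl(u - C_\varepsilon(Ou)\bigr),
\end{equation}
which, combined with $\|C_\varepsilon(Ou)\| \leq \|u\|$, gives $\|C_\varepsilon(u - Ou)\| \leq 2\varepsilon\|u\| \to 0$. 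Thus $C_\varepsilon v \to 0 = P_{\fix(O)}v$ on $N$.

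To pass from $N$ to its closure $\overline{N} = \fix(O)^\perp$, I would use a standard $\varepsilon/3$-argument: given $w \in \overline{N}$ and $\eta > 0$, choose $v \in N$ with $\|w - v\| < \eta/3$; then
\begin{equation}
\|C_\varepsilon w\| \leq \|C_\varepsilon(w-v)\| + \|C_\varepsilon v\| \leq \|w - v\| + \|C_\varepsilon v\|,
\end{equation}
and the second term is eventually smaller than $\eta/3$ by the previous step. Finally, for an arbitrary $v \in H$, I would write $v = v_1 + v_2$ with $v_1 = P_{\fix(O)}v \in \fix(O)$ and $v_2 \in \fix(O)^\perp = \overline{N}$, and conclude $C_\varepsilon v = v_1 + C_\varepsilon v_2 \to v_1$ by linearity.

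The only delicate step is the coboundary computation, where one must be careful about rearranging the infinite sum; however, absolute summability (uniform in $\varepsilon \in (0,1]$) of $\sum (1-\varepsilon)^k O^k u$ in norm justifies the manipulation. The main conceptual obstacle, as in the von Neumann case, is identifying the correct closed invariant complement of $\fix(O)$ — but this is already supplied by Lemma~\ref{lem:Tinvariant}, so the present proof reduces to the quantitative estimate on coboundaries above.
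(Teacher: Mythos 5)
Your proposal is correct and follows essentially the same route as the paper's own proof: the trivial case on $\fix(O)$, the telescoping identity $C_\varepsilon(u-Ou)=\varepsilon\bigl(u-C_\varepsilon(Ou)\bigr)$ yielding the $2\varepsilon\|u\|$ bound on coboundaries, the density argument to pass to $\overline{N}=\fix(O)^\perp$ via Lemma~\ref{lem:Tinvariant}, and the orthogonal decomposition $H=\fix(O)\oplus\fix(O)^\perp$. The only cosmetic difference is that you derive the telescoping identity by an explicit index shift, whereas the paper cites the prerecorded relation $C_\varepsilon v=(1-\varepsilon)C_\varepsilon(Ov)+\varepsilon v$.
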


\begin{proof}
We see immediately that $C_\varepsilon v = P_{\fix(O)}v= v$ for all $v \in {\fix(O)}$. 

Let now $u = (O-I)v$ for some $v \in H$ then
 \begin{align}
  C_\varepsilon u & = C_\varepsilon (O v) - C_\varepsilon v =   C_\varepsilon (O v) - (1-\varepsilon)C_\varepsilon (O v) - \varepsilon v \nonumber \\ 
  & = \varepsilon ( C_\varepsilon (O v) - v)
 \end{align}
 Estimating the norm of the last term we get,
 \begin{equation}
  \varepsilon \lVert C_\varepsilon (O v) - v \rVert \leq \varepsilon (\lVert O v \rVert + \lVert v \rVert) = 2 \, \varepsilon \lVert v \rVert,
 \end{equation}

and this converges to 0 for $\varepsilon \to 0$. Now let $u$ be in the closure of $N:=(O-I)H$, then there is a sequence $(u_k)_{k \in \mathbb{N}} \in N$ that converges to $u$, such that $u_k = {(O-I)v_k}$ for some $v_k \in H$. Then for every $\delta > 0$, there is $K(\delta) \in \mathbb{N}$ such that $\lVert u - u_{K(\delta)} \rVert < \delta$ and hence
\begin{align}
 \lim_{\varepsilon \to 0} \lVert C_\varepsilon u \rVert 
 & \leq \lim_{\varepsilon \to 0} \left( \lVert C_\varepsilon (u - u_{K(\delta)}) \rVert + \lVert C_\varepsilon u_{K(\delta)} \rVert \right) \nonumber \\
 & \leq \lVert u - u_{K(\delta)} \rVert + \lim_{\varepsilon \to 0} 2 \, \varepsilon \lVert v_{K(\delta)} \rVert \nonumber \\
 & < \delta.
\end{align}
Since this inequality holds for all $\delta > 0$ we conclude that $ \lim_{\varepsilon \to 0} \lVert C_\varepsilon u \rVert = 0$ on the closure of $N$, which is equal to ${\fix(O)}^\perp$ by Lemma~\ref{lem:Tinvariant}.

If $\fix(O)$ is a closed, linear subspace, it is a well-known theorem that $ {H = {\fix(O)} \oplus {\fix(O)}^\perp}$. Then we can write any $u \in H$ as $u = v + w$ with $v\in {\fix(O)}, w \in {\fix(O)}^\perp$ and hence $\lim_{\varepsilon \to 0} C_\varepsilon u = \lim_{\varepsilon \to 0} ( C_\varepsilon v +  C_\varepsilon w )= v = P_{\fix(O)}u$ and thus $P_{\fix(O)}$ is an orthogonal projection.
\end{proof}

\begin{myrem}
 According to the theorem the convergence of $C_\varepsilon[O]$ to $P_{\fix(O)}$ is pointwise. If $H=\mathbb{R}^n$ this implies uniform convergence. For simplicity let $\lVert . \rVert$ denote the norm induced by the standard scalar product and $e_i$ the standard basis vectors. Denote $D_\varepsilon := C_\varepsilon[O] - P_{\fix(O)}$. Then
 \begin{align}
  \lVert D_\varepsilon \rVert = \sup_{\lVert x \rVert = 1} \lVert D_\varepsilon x \rVert & \leq \sup_{\lVert x \rVert = 1} \sum_{i=1}^N \lvert x_i \rvert \lVert D_\varepsilon e_i \rVert \nonumber \\ 
  & \leq N \cdot \max_{i=1,\dots,N} \lVert D_\varepsilon e_i \rVert \nonumber \\
  & \leq N \cdot \max_{i=1,\dots,N} 2 \, \varepsilon \lVert  f_i \rVert \to 0,
 \end{align}
 for $\varepsilon \to 0$, where $f_i = 0$ if $e_i \in \fix(O)$, or else $f_i \in H $ is such that $e_i = (O-I)f_i$. In particular the convergence is uniform if $O$ is a contractive matrix.
\end{myrem}

\begin{myrem} We suppose that for compact, normal operators the convergence is uniform  as well. A proof via the spectral theorem\cite{kubrusly2012spectral} might be possible, is however beyond the scope of this work.
\end{myrem}

\subsection{Brief summary of spectral theory for Hilbert space operators}

In the next section we will prove the mean ergodic theorems for another class of operators, which are not necessarily contractions. The present section introduces some of the tools needed for the proof, most notably we establish a link between the spectral radius of an operator and the convergence of its powers (Corollary~\ref{cor:unistab}).

Let H be a complex Banach space and $O:H \to H$ a bounded, linear operator. The \emph{resolvent set} $\rho(O)$ of O is the set of all $\lambda \in \mathbb{C}$, such that the operator $\lambda I - O$ is invertible with a bounded, linear inverse. Its complement $\sigma(O) := \mathbb{C} \setminus \rho(O)$ is called the \emph{spectrum} of $O$. The spectrum can be split into disjoint parts, depending on the reason why the operator $\lambda I - O$ fails to be invertible.

The most important part for our purposes is the \emph{point spectrum}
\begin{equation}
 \sigma_P(O) := \{ \lambda \in \mathbb{C} \mid \ker(\lambda I - O) \neq \{0\} \}.
\end{equation}
The other parts are called the \emph{continuous spectrum}
\begin{align}
 \sigma_C(O) :=  \{ & \lambda \in \mathbb{C} \mid \ker(\lambda I - O) = \{0\}, \ \im(\lambda I - O) \neq H \nonumber \\
  & \mathrm{ and } \ \overbar{\im(\lambda I -O) } = H \},
\end{align}
and the \emph{residual spectrum}
\begin{equation}
  \sigma_C(O) := \{ \lambda \in \mathbb{C} \mid \ker(\lambda I - O) = \{0\}, \ \overbar{\im(\lambda I -O) } \neq H \}.
\end{equation}
Every $\lambda \in \sigma_P(O)$ is called an eigenvalue of $O$ and the corresponding eigenvectors are the elements of ${\ker (\lambda I - O)}$, which is the eigenspace of $O$ at eigenvalue $\lambda$. 

An important class of operators for which the structure of the spectrum is particularly simple and well understood are compact operators.
An operator $O$ is called \emph{compact}, if $OA$ is relatively compact for every bounded subset $A \subset H$. 
\begin{myrem} (Matrices)
 If $H$ is finite-dimensional, then $O$ is compact. In particular every matrix is a compact operator. This is a consequence of the Heine-Borel Theorem, which states that in finite dimensional spaces a subset is compact, if and only if it is closed and bounded. 
\end{myrem}
\begin{myrem} (Compact Domain)
 If $H$ is compact then every map from $H$ to itself is compact. 
\end{myrem}

We will now state without proof a number of general results on compact operators $O \in \mathcal{B}[H]$. The proofs are omitted since they require advanced techniques that have little in common with the main subject of this paper. For details we refer to \citet{kubrusly2012spectral}.

The so-called \emph{Fredholm Alternative states} that the residual and continuous parts of the spectrum of a compact operator on a Hilbert space are either empty or $\{0\}$. In other words, the non-zero spectrum of a compact operator equals its point spectrum.
\begin{mythm} (Fredholm Alternative)
 Let $O:H\to H$ be a compact, bounded, linear operator, then
 \begin{equation}
   \sigma(O) \setminus \{0\} =\sigma_P(O) \setminus \{0\} 
 \end{equation}
 \label{thm:fredholm}
\end{mythm}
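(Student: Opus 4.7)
The approach I would take is to prove the contrapositive: fix $\lambda \neq 0$ and suppose $\lambda \notin \sigma_P(O)$, i.e.\ $T := \lambda I - O$ is injective; I would then show $T$ is invertible with bounded inverse, so $\lambda \in \rho(O)$. The argument splits naturally into three stages: (i) $T$ has closed range, (ii) $T$ is surjective, and (iii) $T^{-1}$ is bounded. The compactness of $O$ enters crucially in the first two stages; the third is a soft consequence of the open mapping theorem.

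For stage (i), given $Tx_n \to y$, I would first argue that $\{x_n\}$ must be bounded. If not, passing to a subsequence one may assume $\|x_n\| \to \infty$; setting $u_n = x_n/\|x_n\|$, compactness of $O$ yields a subsequence along which $Ou_{n_k}$ converges, say to $z$. Then $\lambda u_{n_k} = Tu_{n_k} + Ou_{n_k} \to z$ (since $Tu_{n_k} \to 0$), so $u_{n_k} \to z/\lambda$, and continuity of $T$ forces $T(z/\lambda)=0$; injectivity gives $z = 0$, contradicting $\|u_{n_k}\|=1$. Boundedness in hand, a second application of compactness of $O$ extracts a convergent subsequence $Ox_{n_k} \to w$, whence $x_{n_k} \to (y+w)/\lambda$ and $y \in \im(T)$.

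For stage (ii), consider the descending chain of closed subspaces $H_n := \im(T^n)$ (each closed by applying stage (i) to $T^n = \lambda^n I - K_n$, where $K_n$ is compact since it is a polynomial in $O$ containing at least one factor of $O$). If $T$ were not surjective, I would show $H_{n+1} \subsetneq H_n$ for all $n$: otherwise $T|_{H_n}$ would be a bijection of $H_n$ onto itself, and pulling a vector in $H_{n-1}\setminus H_n$ through injectivity of $T$ yields a contradiction. By Riesz's lemma I would choose unit vectors $x_n \in H_n$ with $d(x_n, H_{n+1}) \geq 1/2$. A short calculation shows that for $n < m$ one has $Ox_n - Ox_m = \lambda x_n - v$ for some $v \in H_{n+1}$, hence $\|Ox_n - Ox_m\| \geq |\lambda|/2$. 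Thus $\{Ox_n\}$ has no Cauchy subsequence even though $\{x_n\}$ is bounded, contradicting compactness of $O$.

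Finally for stage (iii), once $T$ is a bijection of $H$ onto itself, the open mapping theorem (equivalently Banach's inverse mapping theorem) gives boundedness of $T^{-1}$, so $\lambda \in \rho(O)$ as required. I expect the main obstacle to be stage (ii): the descending chain construction and the bookkeeping of which powers of $T$ decompose into $\lambda^n I$ plus a compact operator are the subtle parts, whereas stages (i) and (iii) are fairly routine once compactness is exploited. The whole argument works verbatim in any Banach space, so the Hilbert-space hypothesis plays no essential role beyond being the setting in which the surrounding ergodic theorems are developed.
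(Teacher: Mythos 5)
Your argument is correct, but there is nothing in the paper to compare it against: the paper deliberately states the Fredholm alternative without proof, deferring to Kubrusly's book, on the grounds that the techniques lie outside the paper's main subject. What you have written is the standard Riesz-theory proof: closedness of $\im(\lambda I - O)$ via the normalization/compactness argument, surjectivity via the strictly descending chain $\im(T^n)$ together with Riesz's lemma and the $\lVert Ox_n - Ox_m\rVert \geq \lvert\lambda\rvert/2$ separation, and boundedness of the inverse from the open mapping theorem. All three stages are sound; the only places where you are sketching rather than proving are the induction establishing that the chain $H_n$ is strictly decreasing (the cleanest version: if $x \notin \im T$ then $T^n x \in H_n \setminus H_{n+1}$, since $T^n x = T^{n+1}z$ would force $x = Tz$ by injectivity of $T^n$) and the observation that $T^n = \lambda^n I - K_n$ with $K_n$ compact, which follows because the compact operators form a two-sided ideal. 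Your closing remark is also accurate: the argument uses nothing Hilbert-specific and is valid in any Banach space, which is strictly more general than the setting the paper works in.
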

Furthermore the spectrum is a countable set and its only possible accumulation point is~0, see~\citet{kubrusly2012spectral}, Cor. 2.20. 

The \emph{spectral radius} of an operator O is defined as
\begin{equation}
 r(O) = \sup_{\lambda \in \sigma(O)} \lvert \lambda \rvert.
\end{equation}
The \emph{Gelfand-Beurling formula} establishes a connection between the spectral radius of $O$ and the norm of its powers $\lVert O^n \rVert$, it states that
\begin{equation}
 r(O) = \lim_{n \to \infty} \lVert O^n \rVert ^ {1/n}. \label{eq:gelfand}
\end{equation}
A proof can be found in~\citet{kubrusly2012spectral}, Thm. 2.10. This formula allows to prove that the power of an operator converges uniformly to 0 if and only if its spectral radius is strictly smaller than 1.
\begin{mycor}
 Let $O$ be a bounded, linear operator on a complex Banach space, then
 \begin{equation}
  r(O) < 1 \quad \Leftrightarrow \quad \lim_{n \to \infty} \lVert O^n \rVert = 0.
 \end{equation}
 \label{cor:unistab}
\end{mycor}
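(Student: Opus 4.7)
The plan is to derive both directions directly from the Gelfand--Beurling formula~\eqref{eq:gelfand}, which is stated just above the corollary. The forward implication is essentially immediate, while the reverse one requires an additional argument based on submultiplicativity of the operator norm.

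First I would handle the direction $r(O) < 1 \Rightarrow \lVert O^n \rVert \to 0$. The idea is to pick any $\rho$ with $r(O) < \rho < 1$. By Gelfand--Beurling, $\lVert O^n \rVert^{1/n} \to r(O) < \rho$, so there exists $N$ such that $\lVert O^n \rVert^{1/n} < \rho$ for every $n \geq N$. Raising to the $n$-th power gives $\lVert O^n \rVert < \rho^n$, and since $\rho^n \to 0$ we conclude $\lVert O^n \rVert \to 0$.

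For the reverse direction, I would argue as follows. Assume $\lVert O^n \rVert \to 0$. Then in particular there is some $N \in \mathbb{N}$ with $c := \lVert O^N \rVert < 1$. The naive estimate $\lVert O^n \rVert^{1/n} < 1$ for large $n$ is not quite enough to conclude $r(O) < 1$, since we would still need uniform control of the limit. Here submultiplicativity of the operator norm does the job: writing an arbitrary $n \in \mathbb{N}$ as $n = qN + r$ with $0 \leq r < N$, we obtain
\begin{equation}
 \lVert O^n \rVert \;\leq\; \lVert O^N \rVert^{q} \lVert O^r \rVert \;\leq\; c^q \, D,
\end{equation}
where $D := \max_{0 \leq r < N} \lVert O^r \rVert$ is a finite constant. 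Taking $n$-th roots yields $\lVert O^n \rVert^{1/n} \leq c^{q/n} D^{1/n}$. Since $q/n \to 1/N$ and $D^{1/n} \to 1$ as $n \to \infty$, the Gelfand--Beurling formula gives
\begin{equation}
 r(O) \;=\; \lim_{n \to \infty} \lVert O^n \rVert^{1/n} \;\leq\; c^{1/N} \;<\; 1.
\end{equation}

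The main (mild) obstacle is the second implication: one must resist the temptation to pass directly from $\lVert O^n \rVert \to 0$ to $\lVert O^n \rVert^{1/n} \to 0$, because in general $n$-th roots push values toward $1$. The submultiplicativity trick upgrades the pointwise decay condition $\lVert O^n \rVert \to 0$ into a genuine geometric decay rate, which is what is needed to push the spectral radius strictly below~$1$.
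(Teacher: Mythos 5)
Your proof is correct and follows exactly the route the paper intends: the paper gives no written-out proof of Corollary~\ref{cor:unistab}, stating only that it follows from the Gelfand--Beurling formula~\eqref{eq:gelfand}, and your argument supplies precisely those details (the $\rho$-comparison for the forward direction and the submultiplicativity estimate $\lVert O^{qN+r}\rVert \leq \lVert O^N\rVert^{q}\lVert O^{r}\rVert$ for the reverse). Both steps are sound; the only cosmetic quibble is the reuse of the letter $r$ for both the spectral radius and the remainder in the division $n = qN + r$.
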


\subsection{Ergodic theorems for a class of decomposable Hilbert space operators \label{sec:ergodeco}}

We are now ready to prove the mean ergodic theorems for Hilbert space operators $O$ that admit a decomposition into the sum of a unitary operator and an operator with spectral radius smaller than 1. An imporant class of such operators are stochastic matrices, as we shall see in the next section.
\begin{mythm}
 Let $O \in \mathcal{B}[H]$ on a Hilbert space H. Assume $H = \mathcal{U} \oplus \mathcal{V}$ is the direct sum of $O$-invariant closed subspaces $\mathcal{U}$ and $\mathcal{V}$, such that $U:= OP_\mathcal{U}$ is unitary and $V := OP_\mathcal{V}$ has spectral radius $r(V)<1$. Then
 
 \begin{equation}
  \lim_{N \to \infty} S_N[O]v = P_{\fix(O)}v = \lim_{\varepsilon \to 0} C_\varepsilon[O]v \quad \forall v \in H,
 \end{equation}
 where ${\fix(O)} = \ker(I - O)$ is the subspace of $O$-invariant vectors and $P_{\fix(O)}$ is an orthogonal projection.

\end{mythm}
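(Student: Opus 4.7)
The plan is to exploit the decomposition $H = \mathcal{U} \oplus \mathcal{V}$ and handle each piece with a different tool: the already-proven contraction ergodic theorems on $\mathcal{U}$, and the spectral radius estimate from Corollary~\ref{cor:unistab} on $\mathcal{V}$. Given $v \in H$, I would write uniquely $v = u + w$ with $u \in \mathcal{U}$ and $w \in \mathcal{V}$. Since both subspaces are $O$-invariant, induction gives $O^k v = U^k u + V^k w$, so by linearity both averages split as
\begin{equation*}
 S_N[O]v = S_N[U]u + S_N[V]w, \qquad C_\varepsilon[O]v = C_\varepsilon[U]u + C_\varepsilon[V]w.
\end{equation*}

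On $\mathcal{U}$, viewed as a Hilbert space with the inherited inner product, $U$ is unitary and in particular a contraction. Theorems~\ref{thm:meanergo} and \ref{thm:geomeanergo} then yield $\lim_N S_N[U]u = \lim_{\varepsilon \to 0} C_\varepsilon[U]u = P_{\fix(U)}u$, with $P_{\fix(U)}$ the orthogonal projection of $\mathcal{U}$ onto $\fix(U)$. On $\mathcal{V}$, I would use the assumption $r(V)<1$ twice. First, Corollary~\ref{cor:unistab} gives $\lVert V^n \rVert \to 0$; second, the Neumann series shows that $I - V$ is invertible on $\mathcal{V}$ with bounded inverse, so $\fix(V) = \{0\}$. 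From the telescoping identity $(I-V)\,S_N[V] = \tfrac{1}{N}(I - V^N)$ one obtains $S_N[V]w = \tfrac{1}{N}(I-V)^{-1}(I-V^N)w \to 0$. For the geometric mean, $C_\varepsilon[V] = \varepsilon\,(I - (1-\varepsilon)V)^{-1}$ is well-defined for $\varepsilon$ small, and since the resolvent is continuous at $V$, $(I-(1-\varepsilon)V)^{-1} \to (I-V)^{-1}$ in operator norm and hence $C_\varepsilon[V]w \to 0$.

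Combining these, both limits equal $P_{\fix(U)}P_\mathcal{U} v$. To identify this with a projection onto $\fix(O)$, I would verify $\fix(O) = \fix(U)$: if $v = u + w \in \fix(O)$ then by uniqueness of the decomposition $Uu = u$ and $Vw = w$, and since $\fix(V) = \{0\}$ this forces $w = 0$ and $v = u \in \fix(U)$; the reverse inclusion is immediate because $\mathcal{U}$ is invariant. Thus the common limit is a projection onto $\fix(O)$, proving the two-sided equality in \eqref{eq:operatorlimits}.

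The main obstacle is the final orthogonality claim for $P_{\fix(O)}$. The limit operator $v \mapsto P_{\fix(U)}P_\mathcal{U}v$ has image $\fix(U) = \fix(O)$ and kernel $(\fix(U)^{\perp}\cap \mathcal{U}) \oplus \mathcal{V}$, and orthogonality of the projection is equivalent to $\mathcal{V} \perp \fix(O)$. This does not follow from an algebraic direct sum alone, as emphasised in the remark after the stochastic-matrix version of the theorem, where the analogous projection is explicitly noted to be non-orthogonal in general. The cleanest way to close this gap is to strengthen the hypothesis to an orthogonal direct sum $H = \mathcal{U} \oplus \mathcal{U}^{\perp}$, in which case $P_\mathcal{U}$ is itself orthogonal and composing with the orthogonal $P_{\fix(U)}$ inside $\mathcal{U}$ delivers an orthogonal projection onto $\fix(O)$; such an orthogonal splitting is available for the classes of operators (e.g.\ normal operators, or contractions in the presence of a spectral gap on the unit circle) for which the decomposition is usually invoked.
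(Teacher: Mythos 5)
Your proof follows essentially the same route as the paper's: split $v$ along the invariant decomposition so that $S_N[O]$ and $C_\varepsilon[O]$ separate into a unitary part and a part with $r(V)<1$, apply the contraction ergodic theorems on $\mathcal{U}$, kill the $\mathcal{V}$-part via Corollary~\ref{cor:unistab}, and identify $\fix(O)=\fix(U)$ exactly as the paper does; your telescoping/resolvent argument for $S_N[V]w\to 0$ and $C_\varepsilon[V]w\to 0$ is only a minor (and clean) variant of the paper's direct tail estimate on $\frac{1}{N}\sum_k\lVert V^k\rVert$. Your closing observation is also correct and worth keeping: the limit operator $P_{\fix(U)}P_{\mathcal{U}}$ is orthogonal only within $\mathcal{U}$ and is in general an oblique projection on $H$ unless $\fix(O)\perp\mathcal{V}$ --- a caveat the paper itself concedes in the remark following its proof, so the word ``orthogonal'' in the theorem statement should be read with that qualification or the hypothesis strengthened to an orthogonal splitting as you propose.
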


\begin{proof}
 Since the subspaces $\mathcal{U}$ and $\mathcal{V}$ are O-invariant, we have that $OP_\mathcal{U} = P_\mathcal{U}O$ and  $OP_\mathcal{V} = P_\mathcal{V}O$. This implies $UV = VU = 0$ and further $O^k = (U+V)^k = U^k + V^k$. Hence the averages $S_N[O] = S_N(U) + S_N(V)$ and  $C_\varepsilon[O]= C_\varepsilon(U) + C_\varepsilon(V)$ split into two distinct terms. The mean ergodic theorems imply that 
 \begin{equation}
  \lim_{N\to\infty} S_N(U)v = P_{\ker(I-U)} = \lim_{\varepsilon \to 0} C_\varepsilon(U)v.
 \end{equation}
We show now that on the other hand $\lim_{N\to\infty} S_N(V)v = 0 = \lim_{\varepsilon \to 0} C_\varepsilon(V)v$.
 
By assumption $r(V) < 1$ and hence Corollary~\ref{cor:unistab} implies $\lim_{k \to \infty} \lVert V^k \rVert = 0$. It follows that for all $\delta > 0$ there exists $M \in \mathbb{N}$, such that $\lVert V^k \rVert < \delta $ for all $k \geq M$. Then
 \begin{align}
  \lim_{N \to \infty} \lVert S_N(V) \rVert & \leq  \lim_{N \to \infty} \frac{1}{N}\sum_{k=0}^{M-1} \lVert O^k \rVert + \lim_{N \to \infty} \frac{1}{N}\sum_{k=M}^{N-1} \lVert O^k \rVert  \nonumber \\  
  & \leq  \lim_{N \to \infty} (\frac{M-1}{N} + \delta \frac{N-M-1}{N}) = \delta,
 \end{align}
and since this holds for all $\delta > 0$ we have that $\lim_{N \to \infty} S_N(V) = 0$.
By an analogous argument one establishes $\lim_{\varepsilon \to 0} C_\varepsilon(V) = 0$.
It remains to show that $\ker(I-U) = \ker(I-O)$. By assumption for every $x \in H = \mathcal{U} \oplus \mathcal{V}$ there exist $u \in \mathcal{U}$ and $v \in \mathcal{V}$ such that $x = u+v$. Assume that $Ox=x$, then 
\begin{equation}
 Ox = x \Leftrightarrow (U+V)(u+v) = (u+v) \Leftrightarrow Uu + Vv = u + v,
\end{equation}
and since $Uu \in \mathcal{U}$ and $Vv \in \mathcal{V}$ this is equivalent to
\begin{equation}
 Uu = u \quad \mathrm{and} \quad Vv = v.
\end{equation}
Since V has spectral radius smaller than 1 it cannot have any fixed points except $0$ and hence $v = 0$, which implies that $x = u$. Hence $Ox=x$ if and only if $Ux=x$ or equivalently $\ker(I-U) = \ker(I-O)$.
\end{proof}

\begin{myrem}
 The projection operator in the theorem is the orthogonal projection onto $\ker(I-U)$ along $\im(I-U)$, however in general $\ker(I-U)$ need not be orthogonal onto the subspace $\mathcal{V}$ from the theorem. 
\end{myrem}


\subsection{Ergodic theorems for stochastic matrices \label{sec:ergomat}}

Having established the mean ergodic theorems for fairly general Hilbert space operators, we now turn to most important case for the applications we have in mind, which are stochastic matrices. In this section we will introduce some basic properties of stochastic matrices and then show that they admit a decomposition like the one described in Section~\ref{sec:ergodeco}. We conclude this section by proving the mean ergodic theorems for stochastic matrices.

From now on let $M \in \mathbb{R}^{n\times n}$ be a stochastic matrix. A right eigenvector of $M$ at eigenvalue $\lambda$ is a solution to the equation $Mv = \lambda v$ and similarly a left eigenvector solves $v^T M = \lambda v^T$. A left eigenvector of $M$ is a right eigenvector of $M^T$ and vice versa.  Since $M$ and $M^T$ share the same characteristic polynomial, their spectra and in particular their spectral radii are the same.

\begin{mylem}
 $M$ has spectral radius $r(M)=1$.
\end{mylem}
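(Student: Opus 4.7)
The plan is to establish $r(M) = 1$ by proving both inequalities $r(M) \geq 1$ and $r(M) \leq 1$ separately, using only the two defining properties of a (row-)stochastic matrix: non-negativity of entries and rows summing to one.

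For the lower bound, I would observe that the all-ones vector $\mathbbm{1} = (1,\dots,1)^T \in \mathbb{R}^n$ is a right eigenvector of $M$ at eigenvalue $1$. Indeed, the $i$-th component of $M\mathbbm{1}$ equals $\sum_{j=1}^n M_{ij}$, which equals $1$ since $M$ is row-stochastic. Hence $1 \in \sigma_P(M) \subseteq \sigma(M)$, so $r(M) \geq 1$.

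For the upper bound, the cleanest route is via the induced $\infty$-norm. A short computation shows that for any $v \in \mathbb{R}^n$,
\begin{equation}
  \lVert Mv \rVert_\infty = \max_{i} \Bigl| \sum_{j=1}^n M_{ij} v_j \Bigr| \leq \max_{i} \sum_{j=1}^n M_{ij} \lVert v \rVert_\infty = \lVert v \rVert_\infty ,
\end{equation}
where the last equality uses non-negativity of $M_{ij}$ together with the row-sum condition. Thus $\lVert M \rVert_\infty \leq 1$. For any eigenvalue $\lambda \in \sigma(M)$ with associated eigenvector $v \neq 0$, the identity $Mv = \lambda v$ gives $|\lambda|\, \lVert v\rVert_\infty = \lVert Mv\rVert_\infty \leq \lVert v \rVert_\infty$, hence $|\lambda| \leq 1$. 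Taking the supremum yields $r(M) \leq 1$.

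Combining both bounds yields $r(M) = 1$. There is no real obstacle here; this is the standard Perron-style observation. Alternatively one could apply the Gelfand-Beurling formula~\eqref{eq:gelfand} to $\lVert M^k \rVert_\infty \leq \lVert M \rVert_\infty^k \leq 1$ to obtain the upper bound, or invoke Gershgorin's circle theorem (each Gershgorin disk $\{z : |z - M_{ii}| \leq 1 - M_{ii}\}$ lies inside the closed unit disk), but the direct operator-norm argument is the most elementary and self-contained.
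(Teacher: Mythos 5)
Your proof is correct and entirely standard; the paper itself states this lemma without proof, and your two-sided argument (the all-ones vector as a right $1$-eigenvector for the lower bound, the induced $\infty$-norm bound $\lVert M \rVert_\infty \leq 1$ for the upper bound) is exactly the argument the authors leave implicit. The only cosmetic point is that eigenvectors of a real matrix may lie in $\mathbb{C}^n$, so the norm estimate should be stated for complex $v$ — but the same computation $\lvert \sum_j M_{ij} v_j \rvert \leq \sum_j M_{ij} \lvert v_j \rvert \leq \lVert v \rVert_\infty$ goes through verbatim, so nothing is actually missing.
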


\begin{mylem} \label{lem:stochasticpft}
 M has at least one left eigenvector at eigenvalue 1, which is a probability distribution vector.
\end{mylem}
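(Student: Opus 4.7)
The plan is to first exhibit an eigenvector at eigenvalue 1 (not necessarily non-negative) using the previous lemma, and then upgrade this to a probability distribution vector via a fixed-point argument.

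First I would observe that since $M$ is stochastic, its rows sum to 1, so $M \mathbbm{1} = \mathbbm{1}$, where $\mathbbm{1} = (1,\dots,1)^T$. Hence 1 is an eigenvalue of $M$, which reproves the previous lemma that $r(M) = 1$. Now since $M$ and $M^T$ share the same characteristic polynomial and therefore the same spectrum, 1 is also an eigenvalue of $M^T$, so there exists a non-zero $v \in \mathbb{R}^n$ (possibly complex) with $M^T v = v$, i.e., $v^T M = v^T$. This already produces a left eigenvector at eigenvalue 1, but gives no sign information on its entries.

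The harder step is to show that one can choose such a left eigenvector to be a probability distribution vector. My approach is a direct Markov-operator / Brouwer argument on the probability simplex. Let
\begin{equation}
 \Delta = \Bigl\{ v \in \mathbb{R}^n \,\Bigm|\, v_i \geq 0, \ \sum_{i=1}^n v_i = 1 \Bigr\}
\end{equation}
be the standard simplex. The map $T: v \mapsto M^T v$ sends $\Delta$ into itself: non-negativity of $M^T v$ is immediate from non-negativity of $M$ and of $v$, and normalization follows since
\begin{equation}
 \mathbbm{1}^T (M^T v) = (M \mathbbm{1})^T v = \mathbbm{1}^T v = 1,
\end{equation}
using stochasticity of $M$. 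Because $\Delta$ is compact and convex and $T$ is continuous (and affine), Brouwer's fixed point theorem yields a $v^* \in \Delta$ with $M^T v^* = v^*$, which is exactly the desired left eigenvector.

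An alternative, more constructive variant I would mention would be to take an arbitrary starting distribution $v_0 \in \Delta$ and look at the Cesàro averages $v_N = S_N[M^T] v_0 \in \Delta$; compactness of $\Delta$ yields a convergent subsequence, and the standard telescoping bound $\|M^T v_N - v_N\| = \|(M^T)^N v_0 - v_0\|/N \leq 2/N$ forces any limit point to lie in $\fix(M^T)$. This has the conceptual advantage of linking directly back to Theorem~\ref{thm:meanergo}, foreshadowing the mean ergodic theorem for stochastic matrices proved in the sequel. The main (mild) obstacle is simply being careful that $T$ indeed preserves $\Delta$, which is where row-stochasticity is used in exactly the right way; no spectral or algebraic subtlety is needed beyond that.
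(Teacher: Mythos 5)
Your argument is correct. The paper itself states this lemma without proof, implicitly leaning on the classical Perron--Frobenius theory for non-negative matrices (which it cites in the introduction as guaranteeing a stationary distribution); your proof is therefore a genuinely different, self-contained route. The Brouwer fixed-point argument on the simplex $\Delta$ is the standard elementary existence proof: you correctly verify that $v \mapsto M^T v$ preserves non-negativity and the normalization $\mathbbm{1}^T v = 1$ (the latter being exactly where row-stochasticity, $M\mathbbm{1} = \mathbbm{1}$, enters), and compactness plus convexity of $\Delta$ does the rest. Your alternative via Ces\`{a}ro averages $S_N[M^T]v_0$ with the telescoping bound $\lVert M^T v_N - v_N\rVert \leq 2/N$ is also valid and, as you note, dovetails nicely with the mean ergodic theorems that are the actual subject of this appendix --- arguably it is the more natural proof in context, since it avoids invoking Brouwer and only uses compactness of $\Delta$. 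What Perron--Frobenius would buy beyond your argument is additional structure (uniqueness under irreducibility, positivity of the stationary vector), none of which the lemma claims, so nothing is lost. One small side remark: the observation $M\mathbbm{1} = \mathbbm{1}$ shows $1 \in \sigma(M)$ and hence $r(M) \geq 1$, but it does not by itself reprove the preceding lemma that $r(M) = 1$; the upper bound requires an additional step such as $\lVert M \rVert_\infty = 1$. This does not affect the validity of your proof of the present statement, whose first paragraph is in any case superseded by the fixed-point construction.
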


An eigenvalue of a matrix is called \emph{semisimple}, if its geometric multiplicity equals its algebraic multiplicity or equivalently if the corresponding eigenspace admits an orthogonal eigenbasis (over $\mathbb{C}$). If all eigenvalues of a matrix are semisimple, it is diagonalizable\cite{meyer2000matrix}.

\begin{mythm}
If $M$ is a stochastic matrix, then all eigenvalues $\lambda_i$ with $\lvert \lambda_i \rvert = 1$ are semisimple.
\end{mythm}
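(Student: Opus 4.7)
The plan is to use the fact that products of stochastic matrices are stochastic to get a uniform bound on $\|M^k\|$, and then read off semisimplicity from the Jordan decomposition: polynomial growth from a nontrivial Jordan block associated to a unit-modulus eigenvalue would violate that bound.

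First I would verify that if $M_1, M_2$ are stochastic then so is $M_1 M_2$: non-negativity is preserved under multiplication of non-negative matrices, and $(M_1 M_2)\mathbf{1} = M_1(M_2 \mathbf{1}) = M_1 \mathbf{1} = \mathbf{1}$, so row sums remain $1$. By induction every $M^k$ is stochastic, hence $\|M^k\|_\infty = 1$ in the operator norm induced by the vector $\infty$-norm. Since all norms on $\mathbb{R}^{n\times n}$ are equivalent, there exists $C>0$ with $\|M^k\|\leq C$ for every $k\in\mathbb{N}$ in whatever matrix norm we choose.

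Next, let $M = QJQ^{-1}$ be a Jordan decomposition over $\mathbb{C}$, so that $M^k = QJ^kQ^{-1}$ and boundedness of $\|M^k\|$ implies boundedness of $\|J^k\|$. Since $J$ is block diagonal, this in turn bounds $\|J_m(\lambda)^k\|$ for each individual Jordan block $J_m(\lambda) = \lambda I + N$, where $N$ is the nilpotent shift on $\mathbb{C}^m$. The binomial expansion
\begin{equation}
J_m(\lambda)^k = \sum_{j=0}^{m-1} \binom{k}{j} \lambda^{k-j} N^j
\end{equation}
shows that the $(1,m)$ entry equals $\binom{k}{m-1}\lambda^{k-m+1}$, of modulus $\binom{k}{m-1}|\lambda|^{k-m+1}$. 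For $|\lambda|=1$ and $m\geq 2$ this grows like $k^{m-1}$, contradicting the uniform bound. Hence every Jordan block attached to a unit-modulus eigenvalue must have size $m=1$, which is exactly the statement that such eigenvalues are semisimple.

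The step requiring a little care is the opening one: the spectral radius identity $r(M)=1$ alone is too weak, as it only controls the exponential growth rate of $\|M^k\|$. The genuine input is the sharper bound $\|M^k\|_\infty = 1$, inherited from the preservation of stochasticity under products, which kills any polynomial blow-up that a nontrivial Jordan block would produce.
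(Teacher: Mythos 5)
Your proof is correct. The paper actually states this theorem without giving a proof, so there is no argument to compare against; your route --- uniform boundedness of $\|M^k\|_\infty$ from closure of stochastic matrices under products, combined with the polynomial growth $\binom{k}{m-1}|\lambda|^{k-m+1}\sim k^{m-1}$ of a size-$m\geq 2$ Jordan block at $|\lambda|=1$ --- is the standard and complete way to establish it, and your closing remark correctly identifies why $r(M)=1$ alone would not suffice.
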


\begin{mythm}
 Let $M \in \mathbb{R}^{n \times n}$ be a stochastic matrix and let $Q \in \mathbb{R}^{n \times n}$ be invertible, such that $J=Q^{-1}MQ$ is the Jordan normal form of $M$, then
 \begin{equation}
  \lim_{N \to \infty} S_N[M]  = \lim_{\varepsilon \to 0} C_\varepsilon[M] = P_{\fix(M)},
 \end{equation}
 where $P_{\fix(M)}$ is a projection onto ${\fix(M)} = \{ v \in \mathbb{R}^n \mid Mv=v\}$ given by
 \begin{equation}
  P_{\fix(M)} = Q^{-1} P_{\fix(J)} Q,
 \end{equation}
and $P_{\fix(J)}$ is an orthogonal projection.
\end{mythm}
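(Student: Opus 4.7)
The plan is to reduce the statement to Theorem~\ref{thm:operatorlimits} by exhibiting the required decomposition for a stochastic matrix. The key structural inputs are already in place: the spectral radius is $r(M) = 1$, and by the immediately preceding theorem every eigenvalue of $M$ on the unit circle is semisimple. These are exactly the ingredients needed to make the Jordan form cooperate.

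First I would pass to the Jordan normal form $J = Q^{-1} M Q$ (working over $\mathbb{C}^n$ since eigenvalues may be complex, but noting that the final limits, being real-valued operators on $\mathbb{R}^n$, transfer back without issue). Semisimplicity of the unimodular eigenvalues implies that every Jordan block at $|\lambda| = 1$ has size $1 \times 1$, so these eigenvalues contribute only to the diagonal of $J$. Group the Jordan basis directions into a subspace $\mathcal{U}$ spanned by the coordinate axes corresponding to diagonal entries with $|\lambda_i| = 1$, and a complementary subspace $\mathcal{V}$ spanned by the Jordan blocks with $|\lambda_i| < 1$. Both subspaces are closed and $J$-invariant with $\mathbb{C}^n = \mathcal{U} \oplus \mathcal{V}$.

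Next I would verify the hypotheses of Theorem~\ref{thm:operatorlimits} for $J$ acting on $\mathbb{C}^n$ with the standard inner product. The restriction $J|_\mathcal{U}$ is a diagonal matrix whose entries lie on the unit circle, hence unitary. The restriction $J|_\mathcal{V}$ is block-diagonal with Jordan blocks corresponding to eigenvalues strictly inside the unit disk; since the spectrum of $J|_\mathcal{V}$ is exactly this set, $r(J|_\mathcal{V}) < 1$. Theorem~\ref{thm:operatorlimits} then yields
\begin{equation}
 \lim_{N \to \infty} S_N[J] \;=\; \lim_{\varepsilon \to 0} C_\varepsilon[J] \;=\; P_{\fix(J)},
\end{equation}
where $P_{\fix(J)}$ is the orthogonal projection onto $\fix(J)$. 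Orthogonality here is automatic because $\fix(J)$ is just the span of the standard basis vectors associated with the eigenvalue 1, and its complement $\mathcal{V} \oplus (\mathcal{U} \cap \fix(J)^\perp)$ is the span of the remaining basis vectors — orthogonal in the standard inner product.

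Finally I would transfer the statement back to $M$. Since $M^k$ and $J^k$ are conjugate via $Q$ for every $k$, both averaging maps commute with the similarity, giving $S_N[M] = Q\, S_N[J]\, Q^{-1}$ and $C_\varepsilon[M] = Q\, C_\varepsilon[J]\, Q^{-1}$ (with the direction of conjugation adjusted appropriately for the paper's right-multiplication convention, which is what produces the form $Q^{-1} P_{\fix(J)} Q$ stated in the theorem). Passing to the limit through these fixed similarities yields the claim, and the resulting projection maps onto $Q\,\fix(J) = \fix(M)$ but is generally not orthogonal in $\mathbb{R}^n$ because $Q$ need not be unitary — which is exactly the point of the remark following the statement. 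The main obstacle is essentially bookkeeping: keeping track of the action convention and of the passage between $\mathbb{C}^n$ (where the Jordan form lives) and $\mathbb{R}^n$ (where the final operators act); neither is deep, but both require care to state the projection formula in the precise form advertised.
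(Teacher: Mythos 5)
Your proposal is correct and follows essentially the same route as the paper's proof: pass to the Jordan normal form, use semisimplicity of the unimodular eigenvalues to split $J$ into a unitary part plus a part of spectral radius less than one, apply the decomposable-operator ergodic theorem to $J$, and conjugate back by $Q$ to identify the limit as a (generally non-orthogonal) projection onto $\fix(M)$. The only cosmetic difference is that you spell out the $1\times 1$ blocks and the $\mathbb{C}^n$ bookkeeping a bit more explicitly than the paper does.
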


\begin{proof}
 We assume that the reader is familiar with the Jordan normal form, a good reference is~\citet{meyer2000matrix}.  For the Jordan normal form $J$ of $M$ it is obvious that eigenspaces corresponding to different eigenvalues are orthogonal onto each other. As stated above  the spectral radius of a stochastic matrix is 1 and all eigenvalues on the unit circle are semisimple. Hence $J = U + V$ can be decomposed into a unitary part and a part with spectral radius smaller 1, such that $UV=VU=0$. Since matrices are bounded, linear operators on finite-dimensional spaces, pointwise convergence implies uniform convergence. Thus applying Theorem~\ref{thm:operatorlimits} to $J$ we have 
 \begin{equation}
  \lim_{N \to \infty} S_N(J) = P_{\fix(J)}.
 \end{equation}
Using the identity $M^2 = (Q^{-1} J Q)^2 = Q^{-1} J QQ^{-1} J Q = Q^{-1} J^2 Q$ it is easy to see that
\begin{equation}
 S_N[M] = S_N (Q^{-1} J Q) = Q^{-1} S_N(J) Q,
\end{equation}
and by taking limits we get
\begin{equation}
 \lim_{N\to\infty} S_N[M] = \lim_{N\to\infty} Q^{-1} S_N(J) Q = Q^{-1} P_{\fix(J)} Q.
\end{equation}
It remains to show that $\im (Q^{-1} P_{\fix(J)} Q) = \fix(M)$, which holds since
\begin{align}
   x^T \in \im (Q^{-1} P_{\fix(J)} Q) &  \Leftrightarrow x^T Q^{-1} P_{\fix(J)} Q  \neq 0    \Leftrightarrow x^T Q^{-1} P_{\fix(J)}  \neq 0 \nonumber \\ 
   &   \Leftrightarrow x^T Q^{-1}J=x^T Q^{-1}    \Leftrightarrow  x^T Q^{-1}JQx = x^T  \nonumber \\ 
   & \Leftrightarrow x^T M = x^T. 
\end{align}
The argument for the limit $\lim_{\varepsilon \to 0} C_\varepsilon[M]$ is analogous.
\end{proof}

\begin{myrem}
As for any closed subspace of a Hilbert space, there exists an orthogonal projection onto $\fix(M)$. 
However the projection $P_{\fix(M)}$ that we get from the theorem is in general not orthogonal.
This happens to be so, since stochasticity of a matrix is a property that only holds with respect to a certain basis of $\mathbb{R}^n$, namely the standard normal basis, where every basis vector corresponds to a certain state of the associated Markov chain. 
On the other hand the spectrum of a linear operator is independent of the basis and the theorem is mainly a consequence of the spectral properties of $M$.
We obtain $P_{\fix(M)}$ by switching to a suitable basis, such that $M$ has Jordan normal form, which allows us to apply the results of the previous section. The resulting projection matrix $P_{\fix(M)}$ is sometimes called spectral projection of $M$ at eigenvalue 1.
\citet{meyer2000matrix} gives an explicit characterization of $P_{\fix(M)}$ in terms of sub-matrices of $M$. 
\end{myrem}

Above we saw that the right fixed points of the transition matrix associated to a Perron-Frobenius are expected to be almost constant on the basins, compare Section~\ref{sec:ctsop} and Example~\ref{ex:emstulamfix}. For general Markov chains \citet{deuflhard2005robust} give some intuition on the structure of the right 1-eigenvectors.
In the ideal case of a Markov chain consisting of several uncoupled sub-chains, the right 1-eigenvectors will be constant on the irreducible components. If a Markov chain has several metastable states and transitions between these states are rare events, then it can be thought of as small perturbations of such an ideal chain. \citet{deuflhard2005robust} show that for nearly uncoupled chains the perturbed 1-eigenvectors have eigenvalues close to 1 and are almost constant on the metastable components. They exploit this fact to approximate metastable states. However in the presence of long transients this constant level pattern is in general not preserved and more complex algorithms are required\cite{roblitz2013fuzzy, weber2015g}.

\section{Difference estimates for metastable states ~\label{sec:differenceestimates}}

This section does not intend to provide thorough treatment of committor functions and EMS times in the presence of metastability, but rather aims to develop our intuition of the behaviour that is to be expected. Metastability in Markov chains is an extensive area of research and various related notions of metastable states and almost-invariant sets have been proposed\cite{bovier2002metastability, froyland2009almost, roblitz2013fuzzy}. For our purposes it will suffice to work with the straightforward idea that (left) eigenvectors with real eigenvalues close to 1 characterize metastable states. Note that the eigenvalues need to be real in order to avoid oscillations. 

Let $M$ be the transition matrix of a Markov chain and $v$ an eigenvector of $M$ at a real eigenvalue $\lambda \approx 1$, normalized such that $\lVert v \rVert = 1$. Then $\lVert v^TM - v^T \rVert = (1-\lambda) \lVert v \rVert \approx 0$ and the systems state described by $v$ hardly changes during one iteration of the system and may be considered as invariant on short time-scales.

If we consider $C_\varepsilon$ and $S_N$ as expected values of time averages along trajectories, then $C_\varepsilon$ corresponds to averaging with respect to a geometric distribution with parameter $\varepsilon$, while $S_N$ corresponds to averaging with respect to the equidistribution on $\{0,\dots,N-1\}$ (cf. Section~\ref{sec:gbs}). In order to compare both averages we align the expected values of these distribution by choosing $\varepsilon = \frac{2}{N+1}$ and write in abuse of notation  $ C_\varepsilon = C_{\frac{2}{N+1}} =: C_N$.

Then $s_N(v) = S_N[\lambda]v$,  $q_N(m)= C_N[\lambda]v$ and for the difference term $h(\lambda,N) := \lVert s_N(v) - q_N(v) \rVert$ we have
\begin{align}
h(\lambda,N) & =  \lVert  S_N[\lambda]v - C_N[\lambda]v \rVert  \nonumber  = \lvert  S_N[\lambda] - C_N[\lambda]\rvert \cdot \lVert v \rVert \nonumber  \\
 & =  \lvert \frac{1}{N} \cdot \frac{1-\lambda^N}{1-\lambda} - \frac{2}{N+1} \cdot \frac{1}{1-(1-\frac{2}{N+1})\lambda} \rvert .
 \label{eq:exponentialdifference}
\end{align}
Elementary analytic arguments reveal that the difference vanishes for fixed $\lambda$ and $N \to \infty$ as expected.

Figure~\ref{fig:differenceterms} shows the difference terms for varied $N$ and several eigenvalues $\lambda$, possibly far from 1. We see that for sufficiently large $N$ the difference between EMS time and $\varepsilon$-committor vanishes. Further we observe that on small time-scales $N$ such that the metastable state has not significantly decayed yet, that is $\lVert v^TM^N-v^T \rVert = 1-\lambda^N \approx 0$, the difference term is small as well. Among our example values this effect is most significant for $\lambda=0.999$, that is for a slowly decaying metastable states. 

\begin{figure}
\centering
\includegraphics[width=0.8\columnwidth]{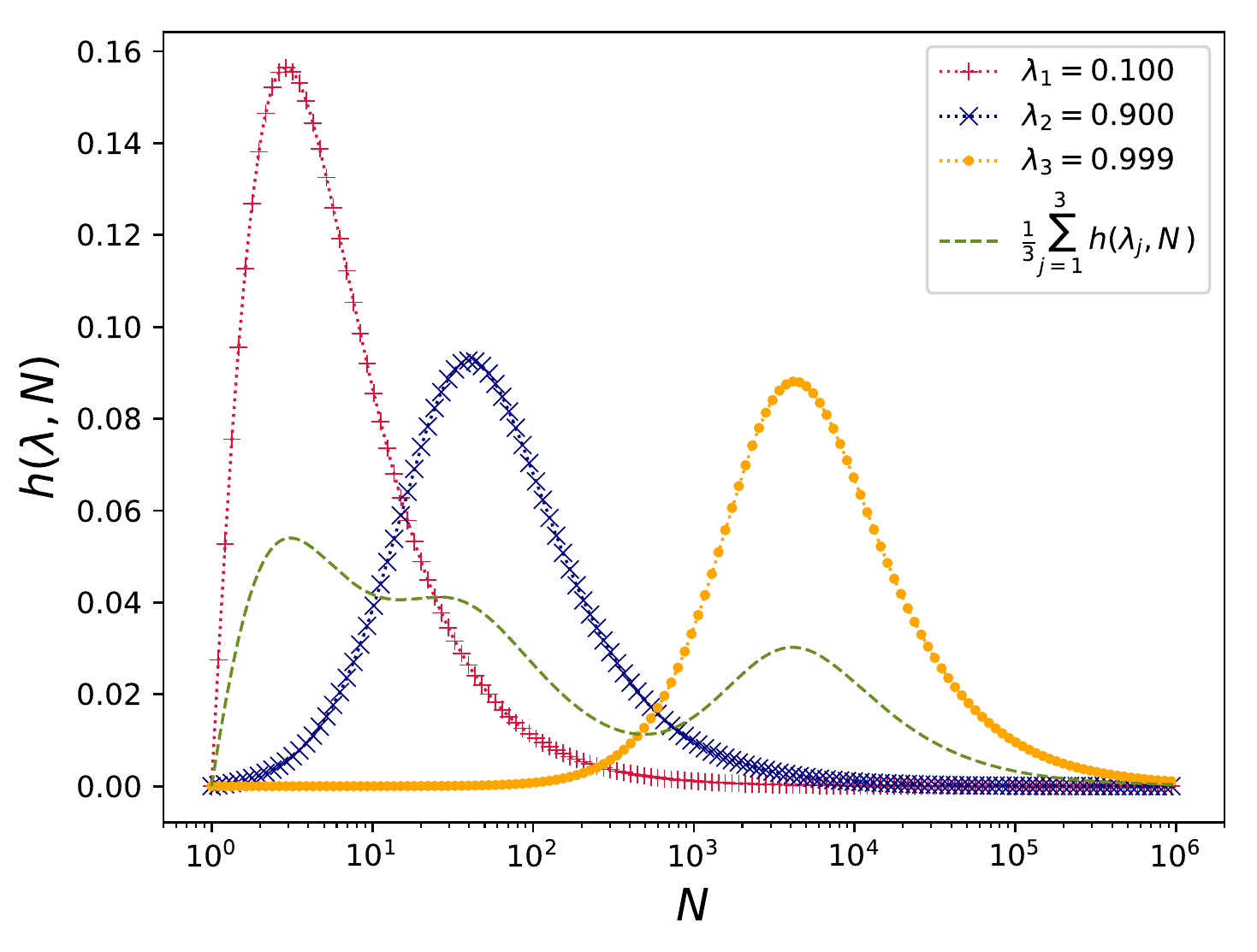}
 \caption{Error term $ h(\lambda,N)$ as described by Eq.~\eqref{eq:exponentialdifference} for different values of $\lambda$ and for $\varepsilon=\frac{2}{N+1}$. The green dashed line shows a difference term for a more complex metastable state given by a linear combination of three eigenvectors. In this case $\beta=\frac{1}{3}$ and $\alpha_i=1$ for all $i=1,2,3$.}
 \label{fig:differenceterms}
\end{figure}

If we extend our notion of a metastable state and allow $m = \beta \left( \sum_{i=1}^L \alpha_i v_i \right)$, where $v_i$ is an eigenvector of $M$ at real eigenvalue $\lambda_i$ with $\lVert v_i \rVert=  1$ for all $i$, and $\alpha_1,\dots,\alpha_L,\beta \in [0,1]$, such that $\lVert m \rVert = 1$, then the difference term $\lVert S_N[M]m - C_N[M]m\rVert$ is bounded by
\begin{align}
  &\lVert \beta \sum_{i=1}^L \alpha_i v_i^T S_N[M]  - \beta \sum_{i=1}^L \alpha_i v_i^T C_N[M]     \rVert \nonumber \\
  &\leq \beta \sum_{i=1}^L \alpha_i \lVert v_i^T S_N[\lambda_i]  - v_i^T C_N[\lambda_i]  \rVert \nonumber \\
  & =\beta \sum_{i=1}^L \alpha_i h(\lambda_i,N),
\end{align}
that is by the weighted sum of the difference terms corresponding to the different eigenvectors. In Figure~\ref{fig:differenceterms} an example of such a combined term is plotted along the individual terms corresponding to the distinct eigenvalues. We see that the difference term for the combined metastable set decays faster than the one corresponding to the largest eigenvalue. Thus it suffices to choose $N$ large enough, such that $h(\lambda,N)$ is small for the largest $\lambda$ in order to ensure that the difference term of a metastable state is small.

\begin{myrem} (Right eigenvectors)
 Starting out with right eigenvectors we analogously obtain the same difference terms. Since $\mathbbm{1}_X$ is a right eigenvector, we can always find non-negative linear combinations of right eigenvectors, that is metastable probability densities. These are related to the systems basin structure, however their precise significance for the dynamics is less clear.
\end{myrem}

\begin{myrem} (Complex eigenvalues)

A generic transition matrix $M$ is diagonalizable over $\mathbb{C}$ with possibly complex eigenvalues and eigenvectors. General linear combination of the kind ${m = \beta \left( \sum_{i=1}^L \alpha_i v_i \right)}$, with $\alpha$ and $\beta$ as above and with complex eigenvectors $v_i$ such that $\lVert v_i \rVert_\infty = 1$, may no longer be interpretable as  metastable states due to osicillations, however the difference terms $h(\lambda,N)$ remain valid. We observe numerically that $h(\lambda,N)$ takes larger maximal values for $\lambda \in \mathbb{C}$. On the other hand simulations show that $h(\lambda,N)$ converges faster to zero for complex $\lambda$ than for real $\lambda$ with the same absolute value. Hence in order to choose $N$ large enough for the difference term of such a generalized state $m$ to vanish we suggest to consider $h(\lvert \lambda \rvert, N)$ for the eigenvalue $\lambda$ of largest absolute value, compare Figure~\ref{fig:complexdifference}.
\end{myrem}

\begin{figure}
\centering
\includegraphics[width=0.8\columnwidth]{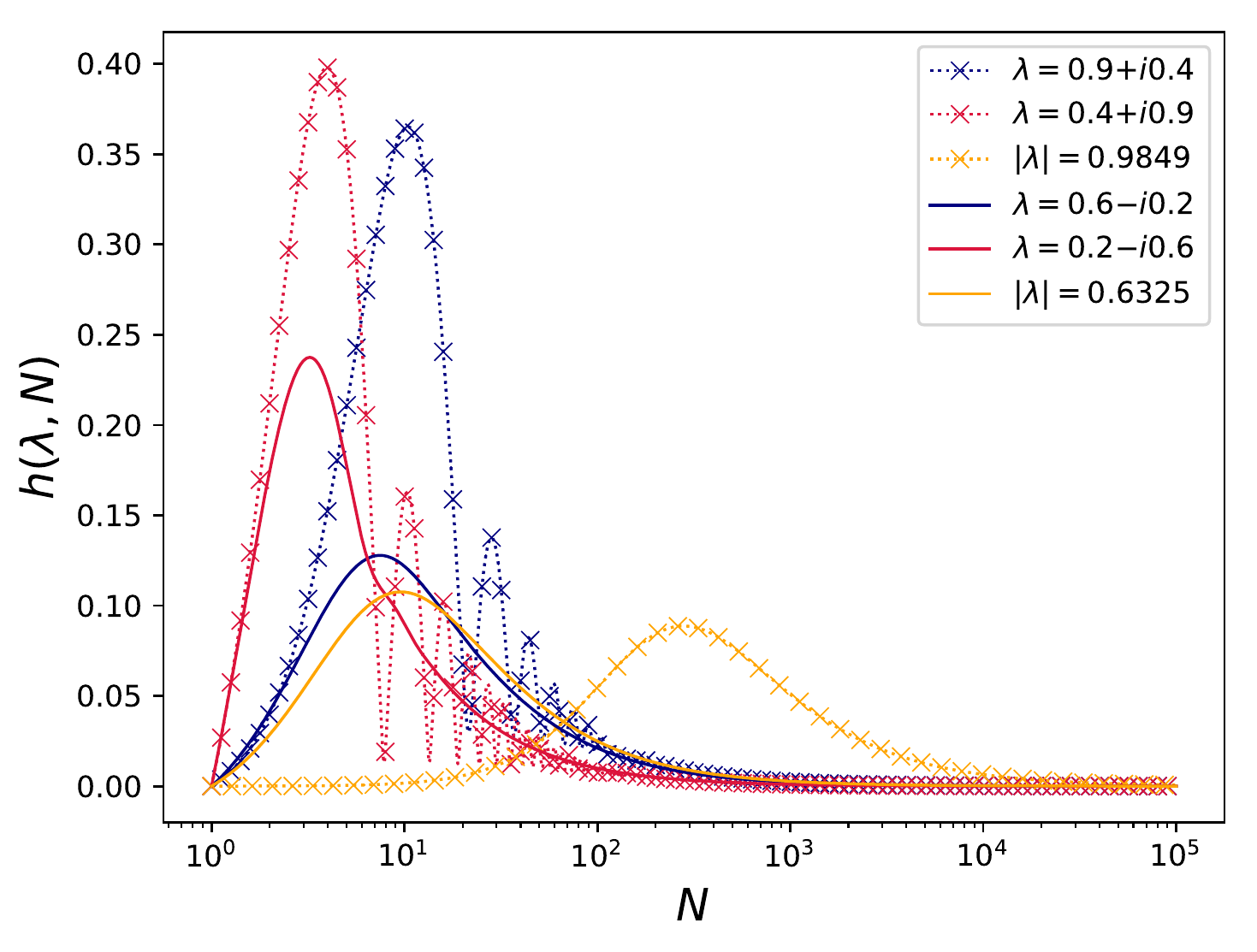}
\caption{$h(\lambda,N)$ for various complex $\lambda$ is shown in red and blue and for comparison $h(\lvert \lambda \rvert, N)$ is shown in yellow. We see that for complex $\lambda$ the difference term takes higher maximal values and converges faster to zero than for their absolute values.}
 \label{fig:complexdifference}
\end{figure}

\begin{myrem} (Detecting metastable states)

Let $x,y \in \mathcal{P}:=\{ \rho \in \mathbb{R}^n \mid \lVert \rho \rVert = 1, \rho \geq 0 \}$ be probability distribution vectors. The expression 
 \begin{equation} \label{eq:metamin}
  x^T C_\varepsilon[M] y,
 \end{equation}
may be understood as the probability that the process with $\varepsilon$-absorption terminates in state $y$ given it started in state $x$. Intuitively, if a set is metastable at time-scale $\varepsilon^{-1}$, then $x^T C_\varepsilon[M] x$ should have a local minimum at $x\in\mathcal{P}$. A promising question for future research might be to study such local minima and their connection to metastability more closely.

\end{myrem}

\clearpage

\end{document}